\newtheorem{theorem}{Theorem}[section]
\newtheorem{lemma}[theorem]{Lemma}
\theoremstyle{definition}
\newtheorem{example}[theorem]{Example}
\newtheorem{definition}[theorem]{Definition}
\numberwithin{equation}{section}
\newcommand{\N}{\mathbb{N}}
\newcommand{\R}{\mathbb{R}}
\newcommand{\E}{\mathbb{E}} 
\newcommand{\var}{\mathbb{V}\mathrm{ar}}
\newcommand{\X}{\mathcal{X}}
\newcommand{\Y}{\mathcal{Y}}
\newcommand{\V}{\mathcal{V}}
\newcommand{\W}{\mathcal{W}}
\newcommand{\pa}{p_\alpha}
\newcommand{\qa}{q_\alpha}
\newcommand{\der}{\mathrm{d}}
\newcommand{\Der}{\mathrm{D}}
\newcommand{\MG}{\mathcal{M}_f^{\mathrm{G}}}
\newcommand{\MF}{\mathcal{M}_f^{\mathrm{F}}}
\newcommand{\MB}{\mathcal{M}_f^{\mathrm{B}}}
\newcommand{\MIC}{\mathcal{M}_f^{\mathrm{IC}}}
\newcommand{\mb}{\mathcal{M}^{\mathrm{B}}}
\newcommand{\mic}{\mathcal{M}^{\mathrm{IC}}}
\newcommand{\lone}{\ell^1}
\newcommand{\ltwo}{\ell^2}
\newcommand{\ellp}{\ell^p}
\newcommand{\linf}{\ell^{\infty}}
\newcommand{\ua}{u_\alpha}
\newcommand{\uai}{[\ua(f)]_i}
\newcommand{\uhat}{\hat{u}_{\alpha}}
\newcommand{\Uhat}{\hat{U}}
\newcommand{\ICB}{\mathrm{ICB}_J^{p_{\alpha}}}
\newcommand{\ICBq}{\mathrm{ICB}_{\|\cdot\|_1}^{q_{\alpha}}}
\newcommand{\ICBl}{\mathrm{ICB}_{\lone}^{p_{\alpha}}}
\newcommand{\ICBlisoq}{\mathrm{ICB}_{\lone(\R^m)}^{q_{\alpha}}}
\newcommand{\dom}{\mathrm{dom}}
\newcommand{\cont}{\mathrm{cont}}
\DeclareMathOperator*{\argmin}{arg\,min}
\newcommand{\supp}{\mathrm{supp}}
\newcommand{\abs}[1]{\vert #1 \vert}
\newcommand{\sign}{\mathrm{sign}}
\newcommand*{\QED}{\hfill\ensuremath{\square}}
\newcommand{\D}{D}
\begin{document}

\sloppy

\title{Bias-Reduction in Variational Regularization}

\author{Eva-Maria Brinkmann, Martin Burger, Julian Rasch, Camille Sutour\thanks{Institut f\"ur Numerische und Angewandte Mathematik, Westf\"alische Wilhelms-Universit\"at (WWU) M\"unster. Einsteinstr. 62, D 48149 M\"unster, Germany.} }

\maketitle

\begin{abstract}
The aim of this paper is to introduce and study a two-step debiasing method for variational regularization. After solving the standard variational problem,
the key idea is to add a consecutive debiasing step minimizing the data fidelity on an appropriate set, the so-called model manifold. 
The latter is defined by Bregman distances or infimal convolutions thereof, 
using the (uniquely defined) subgradient appearing in the optimality condition of the variational method. 
For particular settings, such as anisotropic $\ell^1$ and TV-type regularization, previously used 
debiasing techniques are shown to be special cases.
The proposed approach is however easily applicable to a wider range of regularizations. 
The two-step debiasing is shown to be well-defined and to optimally reduce bias in a certain setting. 

In addition to visual and PSNR-based evaluations, different notions of bias and variance decompositions are investigated in numerical studies.  
The improvements offered by the proposed scheme are demonstrated and its performance is shown to be comparable to optimal results obtained with Bregman iterations. 
 
\end{abstract}

\section{Introduction}
\label{sec:Intro}

Variational regularization methods with nonquadratic functionals such as total variation 
or $\ell^1$-norms have evolved to a  standard tool in inverse problems \cite{tvzoo1,schusterbuch}, image processing \cite{caselles}, compressed sensing \cite{candes}, 
and recently related fields such as learning theory \cite{Combettes}. 
The popularity of such approaches stems from superior structural properties compared to other regularization approaches. 
$\ell^1$-regularization for example leads to sparse solutions with very accurate or even exact reconstruction 
of the support of the true solution. On the other hand it is known that such methods suffer from a certain bias 
due to the necessary increased weighting of the regularization term with increasing noise.
Two well-known examples are the loss of contrast 
in total variation regularization \cite{tvzoo1,osh-bur-gol-xu-yin} 
or shrinked peak values in $\ell^1$-regularization. 
Accordingly, quantitative values of the solutions have to be taken with care.
 
Several approaches to reduce or eliminate the bias of regularization methods have been considered in literature: 
For $\ell^1$-regularization and similar sparsity-enforcing techniques an ad-hoc approach 
is to determine the support of the solution by the standard variational methods in a first step, 
then use a second debiasing step that minimizes the residual (or a general data fidelity) restricted to that support, 
also known as {\em refitting} \cite{figueiredo,lederer,falcon}. 
A slightly more advanced approach consists in adding a sign-constraint derived from the solution 
of the variational regularization method in addition to the support condition. 
This means effectively that the solution of the debiasing step shares an $\ell^1$-subgradient 
with the solution of the variational regularization method. 
A different and more general approach is to iteratively reduce the bias via Bregman iterations 
\cite{osh-bur-gol-xu-yin} or similar approaches \cite{gilboa,tadmorvese}. 
Recent results for the inverse scale space method in the case of $\ell^1$-regularization 
(respectively certain polyhedral regularization functionals \cite{adaptive1,adaptive2,bregmanbuchkapitel}) 
show that the inverse scale space performs some kind of debiasing. 
Even more, under certain conditions, the variational regularization method and the inverse scale space method provide 
the same subgradient at corresponding settings of the regularization parameters \cite{spectraltv2}. 
Together with a characterization of the solution of the inverse scale space method as a minimizer 
of the residual on the set of elements with the same subgradient, 
this implies a surprising equivalence to the approach of performing a debiasing step with sign-constraints. 
Recently, bias and debiasing in image processing problems were discussed in a more systematic way by Deledalle et al. \cite{deledalle,deledalle2016clear}. 
They distinguish two different types of bias, namely method bias and model bias. 
In particular they suggest a debiasing scheme to reduce the former, which can be applied to some polyhedral one-homogeneous regularizations. 
The key idea of their approach is the definition of suitable spaces, called model subspaces, on which the method bias is minimized.
The remaining model bias is considered as the unavoidable part of the bias, linked to the choice of regularization and hence the solution space of the variational method. 
The most popular example is the staircasing effect that occurs for total variation regularization 
due to the assumption of a piecewise constant solution. 
In the setting of $\ell^1$-regularization a natural model subspace is the set of signals with a given support, 
which yields consistency with the ad-hoc debiasing approach mentioned above.

Based on this observation, the main motivation of this paper is to further develop the approach in the setting of variational regularization 
and unify it with the above-mentioned ideas of debiasing for $\ell^1$-regularization, Bregman iterations, and inverse scale space methods. 

Let us fix the basic notations and give a more detailed discussion of the main idea. 
Given a bounded linear operator $A \colon \X \to \Y $ between Banach spaces, a convex regularization functional $J \colon \X \rightarrow \mathbb{R} \cup \{\infty\}$ 
and a differentiable data fidelity $H: \Y \times \Y \rightarrow \mathbb{R}$, we consider the solution of the variational method
 \begin{equation} \label{variationalmethod0}
      \ua \in \arg \min_{u \in \X} \ H(Au,f) + \alpha J(u)
\end{equation}
as a first step. 
Here $\alpha > 0$ is a suitably chosen regularization parameter. 
This problem has a systematic bias, as we further elaborate on below. 
The optimality condition is given by
\begin{equation} \label{optimality0}
      A^*  \partial_{u} H (A\ua,f) + \alpha \pa = 0,\ \pa \in \partial J(\ua),
\end{equation}
where $\partial_{u} H$ is the derivative of $H$ with respect to the first argument.
Now we proceed to a second step, where we only keep the subgradient $\pa$ and minimize  
\begin{equation} \label{debiasedproblem0}
\uhat \in \arg \min_{u \in \X} \  H(Au,f) \text{ s.t. } \pa \in \partial J(u).
 \end{equation}
Obviously, this problem is only of interest if there is no one-to-one relation between subgradients and primal values $u$, otherwise we always obtain $\uhat=u_\alpha$. 
The most interesting case with respect to applications is the one of $J$ being absolutely one-homogeneous, i.e.
$J(\lambda u) = |\lambda| J(u)$ for all $\lambda \in \R$,
where the subdifferential can be multivalued at least at $u=0$.
 
The debiasing step can be reformulated in an equivalent way as
\begin{equation} \label{debiasedproblem1}
 	\min_{u \in \X} \ H(Au,f) \text{ s.t. } {D}_J^{\pa}(u,\ua) = 0,
\end{equation}
with the (generalized) Bregman distance given by
\begin{equation*}
	{D}_J^{p}(u,v) = J(u)-J(v)-\langle p, u-v \rangle, \quad p \in  \partial J(v).
\end{equation*}
We remark that for absolutely one-homogeneous $J$ this simplifies to 
\begin{equation*}
	{D}_J^{p}(u,v) = J(u)-\langle p, u\rangle, \quad p \in  \partial J(v).
\end{equation*}
The reformulation in terms of a Bregman distance indicates a first connection to Bregman iterations, which we make more precise in the sequel of the paper.

Summing up, we examine the following two-step method:
\begin{enumerate}
\item[1)] Compute the (biased) solution $\ua$ of \eqref{variationalmethod0} with optimality condition \eqref{optimality0},
\item[2)] Compute the (debiased) solution $\uhat$ as the minimizer of \eqref{debiasedproblem0} or equivalently \eqref{debiasedproblem1}.
\end{enumerate} 
In order to relate further to the previous approaches of debiasing $\ell^1$-minimizers given only the support and not the sign, 
as well as the approach with linear model subspaces, we consider another debiasing approach being blind against the sign.  
The natural generalization in the case of an absolutely one-homogeneous functional $J$ is to replace the second step by 
\begin{equation*} 
	\min_{u \in \X} \ H(Au,f)  \text{ s.t. } \ICB(u,\ua)= 0,
\end{equation*}
where 
\begin{align*}
 \ICB(u,\ua) := \big[\D_J^{\pa}(\cdot,\ua) \Box \D_J^{\text{-}\pa}(\cdot,-\ua)\Big](u)
\end{align*}
denotes the infimal convolution between the Bregman distances 
$\D_J^{\pa}(\cdot,\ua)$ and $\D_J^{-\pa}(\cdot,-\ua)$, evaluated at $u \in \X$.
The infimal convolution of two functionals $F$ and $G$ on a Banach space $\X$ is defined as
\begin{align*}
	(F \Box G)(u) &= \inf_{\substack{\phi, \psi \in \X,\\ \phi + \psi = u}} F(\phi) + G(\psi) \\
 &= \inf_{z\in \X} F(u-z) + G(z).
\end{align*}
For the sake of simplicity we carry out all analysis and numerical experiments in this paper for a least-squares data fidelity (related to i.i.d. additive Gaussian noise) 
\begin{equation} \label{quadraticfidelity}
	H(Au,f) = \frac{1}2 \Vert A u - f \Vert_\Y^2
\end{equation}
for some Hilbert space $\Y$, but the basic idea does not seem to change for other data fidelities and noise models.

We show that the sets characterized by the constraints 
\begin{align*}
 D_J^{\pa}(u,\ua) = 0 \quad \text{ and } \quad \ICB(u,\ua)= 0
\end{align*}
constitute a suitable extension of the model subspaces introduced in \cite{deledalle} to 
general variational regularization. 
In particular, we use those manifolds to provide a theoretical basis to define the bias of variational methods and investigate the above approach as a method to reduce it. 
Moreover, we discuss its relation to the statistical intuition of bias.  
At this point it is important to notice that choosing a smaller regularization parameter will also decrease bias, 
but on the other hand strongly increase variance. 
The best we can thus achieve is to reduce the bias at fixed $\alpha$ 
by the two-step scheme while introducing only a small amount of variance. 

The remainder of the paper is organized as follows: 
In Section \ref{sec:motivation} we motivate our approach by considering bias related to the well-known ROF-model \cite{rof} 
and we review a recent approach on debiasing \cite{deledalle}.   
In the next section we introduce our debiasing technique supplemented by some first results.
Starting with a discussion of the classical definition of bias in statistics, we consider a deterministic characterization of bias in Section \ref{sec:BiasModelManifolds}. 
We reintroduce the notion of model and method bias as well as model subspaces as proposed in \cite{deledalle} and extend it to the infinite-dimensional variational setting.  
We furthermore draw an experimental comparison between the bias we consider in this paper and the statistical notion of bias. 
Finally, we comment on the relation of the proposed debiasing to Bregman iterations \cite{osh-bur-gol-xu-yin} and inverse scale space methods \cite{scherzer,gilboa}.
We complete the paper with a description of the numerical implementation via a first-order primal-dual method and show numerical results for signal deconvolution and image denoising.

\section{Motivation}
\label{sec:motivation}
Let us start with an intuitive approach to bias and debiasing in order to further motivate our method. 
To do so, we recall a standard example for denoising, namely the well-known ROF-model \cite{rof}, and 
we rewrite a recent debiasing approach \cite{deledalle} in the setting of our method.  

\subsection{Bias of total variation regularization}
As already mentioned in the introduction, variational regularization methods suffer from a certain bias. 
This systematic error becomes apparent when the regularization parameter is increased. 
Indeed this causes a shift of the overall energy towards the regularizer, and hence a deviation of the reconstruction
from the data in terms of quantitative values.
Intuitively, this can be observed from the discrete version of the classical ROF-model \cite{rof}, i.e. 
\begin{align}
 \ua \in \arg \min_{u \in \R^n} \frac{1}{2} \| u - f \|_2^2 + \alpha \|\Gamma u\|_1,
 \label{eq:rof}
\end{align}
with a discrete gradient operator $\Gamma \in \R^{m \times n}$.
It yields a piecewise constant signal $\ua$ reconstructed from an observation $f \in \mathbb{R}^n$, which has been corrupted by Gaussian noise (see Figure \ref{fig:1DTV_signal}(a)).
\begin{figure*}[th!]
 \center 
 \begin{tabular}{cc}
   \includegraphics[width=0.45\textwidth]{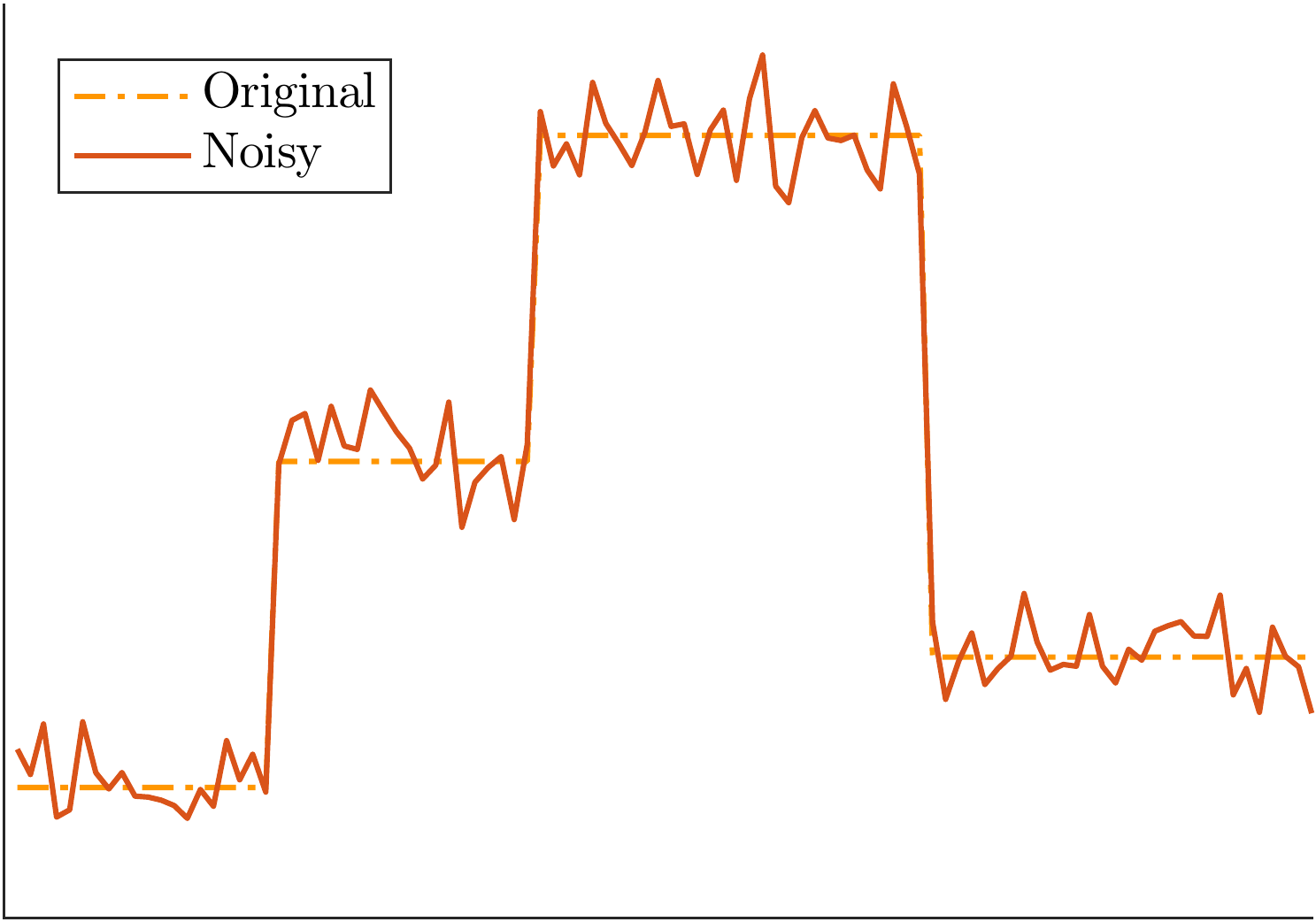}\hfill&
   \includegraphics[width=0.45\textwidth]{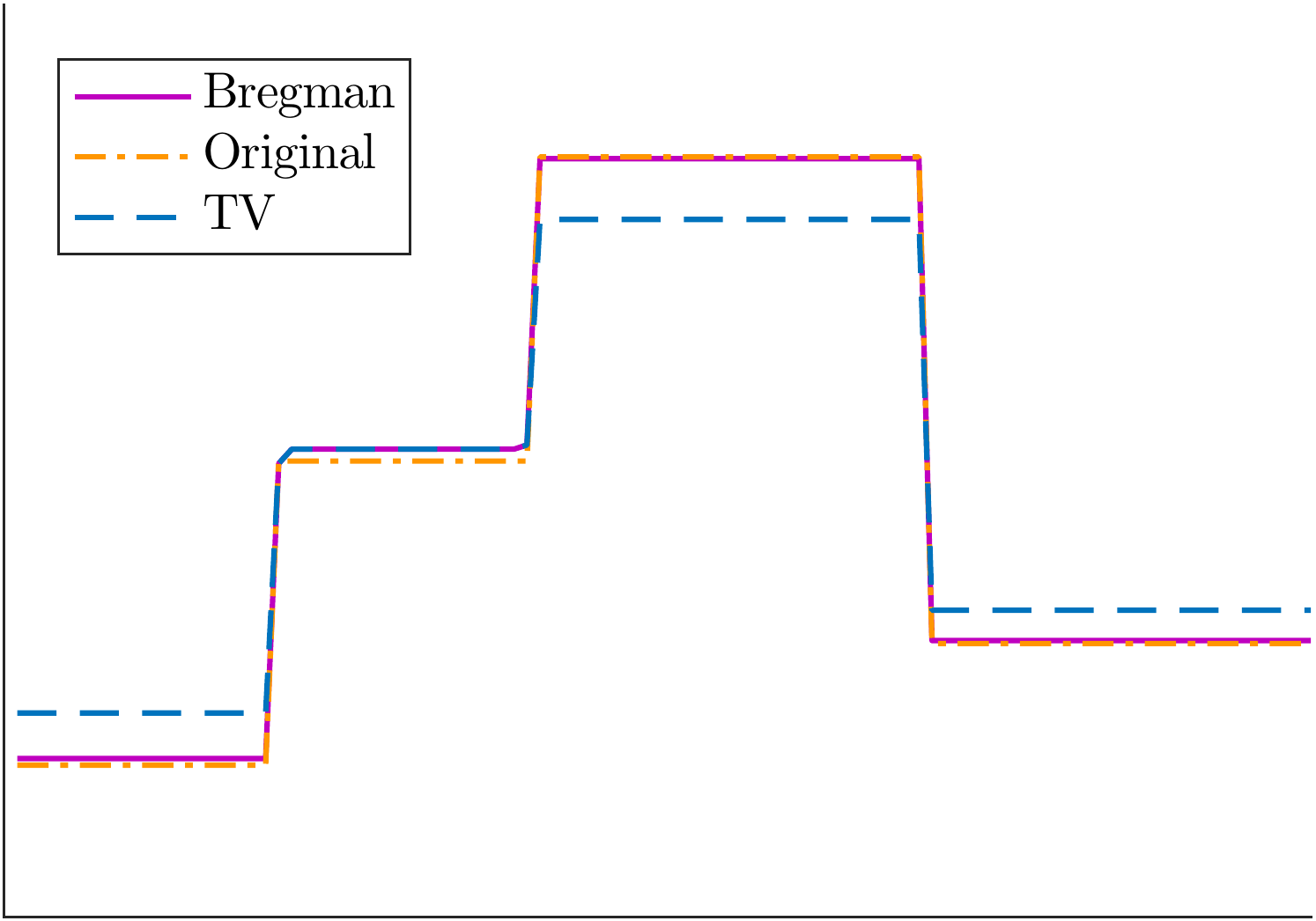} \\
   (a) Original and noisy signal & (b) Denoising with TV regularization \\
   & and Bregman iterations
 \end{tabular}
\caption{Illustration of the bias of the ROF model on a 1D signal. (a) Original signal, and noisy signal corrupted by additive Gaussian noise.
(b) Restoration of the noisy signal with TV regularization and Bregman iterations.
The TV reconstruction recovers the structure of the signal but suffers from a loss of contrast, which is however well recovered with Bregman iterations.}
\label{fig:1DTV_signal}
\end{figure*}
Figure \ref{fig:1DTV_signal}(b) shows the solution of \eqref{eq:rof} together with the true, noiseless signal we aimed to reconstruct.
Even though the structure of the true signal is recovered, the quantitative values of the reconstruction do 
not match the true signal. 
Instead, jumps in the signal have a smaller height, which is often referred to as a loss of contrast. 
Without any further definition, one could intuitively consider this effect as the bias (or one part of the bias) 
of the ROF model. 
Hence, the goal of a bias reduction method would be to restore the proper signal height while keeping the (regularized) structure. 

It has been shown in \cite{osh-bur-gol-xu-yin,benninggroundstates} that this can be achieved by the use of Bregman iterations,
i.e. by iteratively calculating
\begin{align}
 \ua^{k+1} \in \arg \min_{u \in \R^n} \frac{1}{2} \| u - f \|_2^2 + \alpha D_{J}^{\pa^k}(u,\ua^k),
 \label{eq:rof_bregman}
\end{align}
where in our case $J(u) = \|\Gamma u\|_1$, and $\pa^k \in \partial J(\ua^k)$ is a subgradient of the last iterate $\ua^k$. 
Since for total variation regularization the subgradient $\pa^k$ essentially encodes the edge information of the last iterate, its iterative inclusion
allows to keep edges while restoring the correct height of jumps across edges. 
We further elaborate on that in Section \ref{sec:BiasModelManifolds}. 
Indeed, the reconstruction via \eqref{eq:rof_bregman} in Figure \ref{fig:1DTV_signal}(b) shows an almost perfect recovery 
of the true signal even in terms of quantitative values.
This indicates that Bregman iterations are able to reduce or even eliminate our heuristically defined bias.

However, a theoretical basis and justification is still missing, i.e. a proper definition of the bias of variational methods, a proof that 
Bregman iterations indeed reduce the bias in that sense, and in particular a link to the statistical definition and understanding of bias. 
With this paper we aim to define a proper basis for this link, and in particular further establish the connection between 
bias reduction techniques and Bregman distances.

\subsection{Recent debiasing and Bregman distances}
\label{subsec:motivation2}

In order to further motivate the use of Bregman distances for bias reduction let us recall and review a very recent approach on debiasing and 
work out its relation to Bregman distances.
In \cite{deledalle}, Deledalle et al. introduce a debiasing algorithm for anisotropic 
TV-type regularized problems 
\begin{align*}
 \ua \in \arg \min_{u \in \R^n} \frac{1}{2} \| Au - f \|_2^2 + \alpha \|\Gamma u\|_1, 
\end{align*}
with a linear operator $A \in \R^{n \times d}$, a discrete gradient operator $\Gamma \in \R^{n \times m}$ and noisy data $f \in \R^d$.
In \cite{deledalle} the authors argued that the loss of contrast characteristic for this kind of regularization is indeed bias in their sense.
In order to correct for that error, the proposed debiasing method in \cite{deledalle} consists in looking for a debiased solution $\uhat$ 
such that $\Gamma \uhat$ and $\Gamma \ua$ share the same support, but $\uhat$ features the right intensities. 
Mathematically, the solution $\uhat$ of their debiasing problem is given by 
\begin{align}
   \uhat \in \arg \min_{u \in \R^n}  \sup_{z \in F_{\cal I}} \ \frac{1}{2} \|Au - f\|_2^2 + \langle \Gamma u, z \rangle,
   \label{eq:deledalle_deb}
\end{align}
where $F_{\cal I}=\{ z \in \R^m ~|~z_{\cal I} = 0 \}$,
and ${\cal I}$ is the set of indices corresponding to nonzero entries of $\Gamma \ua$. 
We can explicitly compute the supremum (the convex conjugate of the indicator function of the set $F_{\cal I}$), which is
\begin{align*}
 \sup_{z \in F_{\cal I}} \langle \Gamma u, z \rangle  = \begin{cases}
                                                         \infty, &(\Gamma u)_i \neq 0 \text{\small{ for some }} i \notin {\cal I}, \\ 
                                                         0, & \text{ else.}
                                                        \end{cases}
\end{align*}
Hence, $\uhat$ can only be a minimizer of \eqref{eq:deledalle_deb} if $\supp(\Gamma \uhat) \subset \supp(\Gamma \ua )$, thus
\begin{align}
 \uhat \in \arg \min_{u \in \R^n}  \ &\frac{1}{2} \|Au - f\|_2^2 \nonumber \\ 
 \text{ s.t. } &\supp(\Gamma \uhat) \subset \supp(\Gamma \ua ).
 \label{eq:deledalle_deb2}
\end{align}

We can also enforce this support condition using the infimal convolution of two $\ell^1$-Bregman distances.
Defining $J(u) = \|\Gamma u\|_1$, the subdifferential of $J$ at $\ua$ is given by
\begin{align*}
\partial J(\ua) &= \{ \Gamma^T \qa \in \R^n ~|~ \Vert \qa \Vert_\infty \leq 1,\\ 
(\qa)_i &= \text{sign}((\Gamma \ua)_i) \text{ for } (\Gamma \ua)_i \neq 0 \}.
\end{align*} 
In particular $|(\qa)_i| = 1$ on the support of $\Gamma \ua$. 
Let $\qa$ be such a subgradient and consider the $\ell^1$-Bregman distances 
$D_{\|\cdot\|_1}^{\qa}(\cdot, \Gamma \ua)$ and $D_{\|\cdot\|_1}^{-\qa}(\cdot, -\Gamma \ua)$.
According to \cite{moeller14}, their infimal convolution evaluated at $\Gamma u$ is given by:
\begin{align*}
&\quad \ICBq(\Gamma u,\Gamma \ua) \\
&= [ D_{\|\cdot\|_1}^{\qa}(\cdot, \Gamma \ua) \Box D_{\|\cdot\|_1}^{-\qa}(\cdot, -\Gamma \ua)] (\Gamma u) \\
&= \sum_{i=1}^m (1-|(q_{\alpha})_i|)|(\Gamma u)_i|.
\end{align*}
{
We observe that this sum can only be zero if $|(\qa)_i| = 1$ or $(\Gamma u)_i = 0$ for all $i$. 
Assuming that a qualification condition holds, i.e. $\pa = \Gamma^T \qa \in \partial J(\ua)$ with $| (\qa)_i | < 1 $ for $i \notin {\cal I}$, 
i.e. $| (\qa)_i | = 1 \Leftrightarrow (\Gamma \ua)_i \neq 0$,
we can rewrite the above debiasing method \eqref{eq:deledalle_deb} as
}
\begin{align*}
\min_{u \in \R^n} \ \frac{1}{2} \|Au - f\|_2^2  \text{ s.t. } 
\ICBq(\Gamma u,\Gamma \ua) = 0.
\end{align*}
Note that the zero infimal convolution exactly enforces the support condition \eqref{eq:deledalle_deb2} only if 
$| (\qa)_i | < 1$ for all $i \in {\cal I}$. 
Intuitively, since the subdifferential is multivalued at $(\Gamma \ua)_i = 0$, this leads to the question of how to choose $\qa$ properly. 
However, our method does not depend on the choice of a particular $\qa$, but instead we use a unique subgradient $\pa$ coming from the optimality condition of the problem.
We further comment on this in Section \ref{sec:BiasModelManifolds}.

\begin{figure}[t!]
 \center 
 \includegraphics[width=0.4\textwidth]{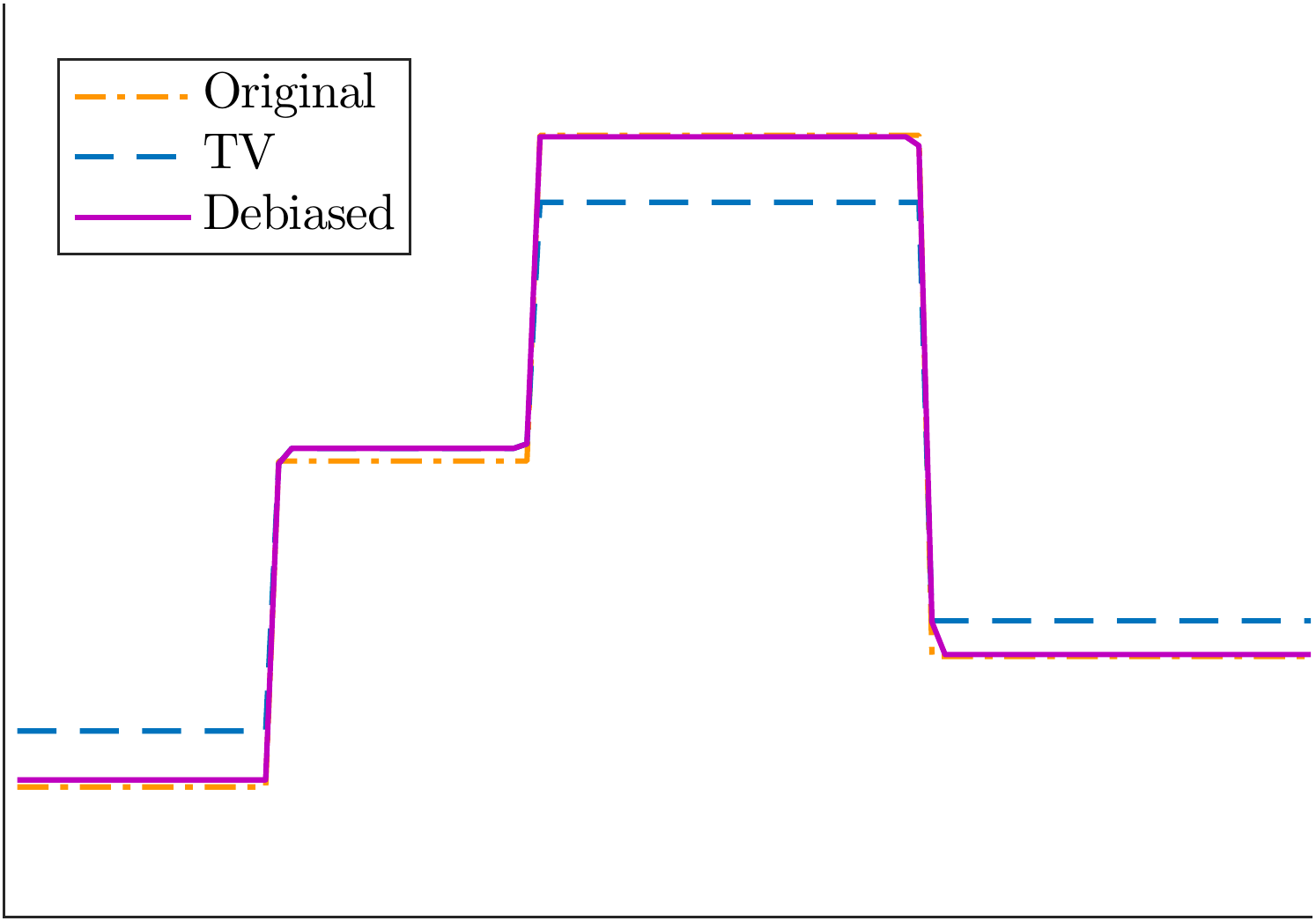}
 \caption{TV denoising of a one-dimensional noisy signal and debiasing using the proposed approach with zero Bregman distance.}
 \label{fig:1DTVsignal2}
\end{figure}

\begin{figure*}[th!]
\center
\begin{tabular}{ccc}
Color image\footnotemark
& Original image & Noisy image\\
\includegraphics[width=0.3\textwidth]{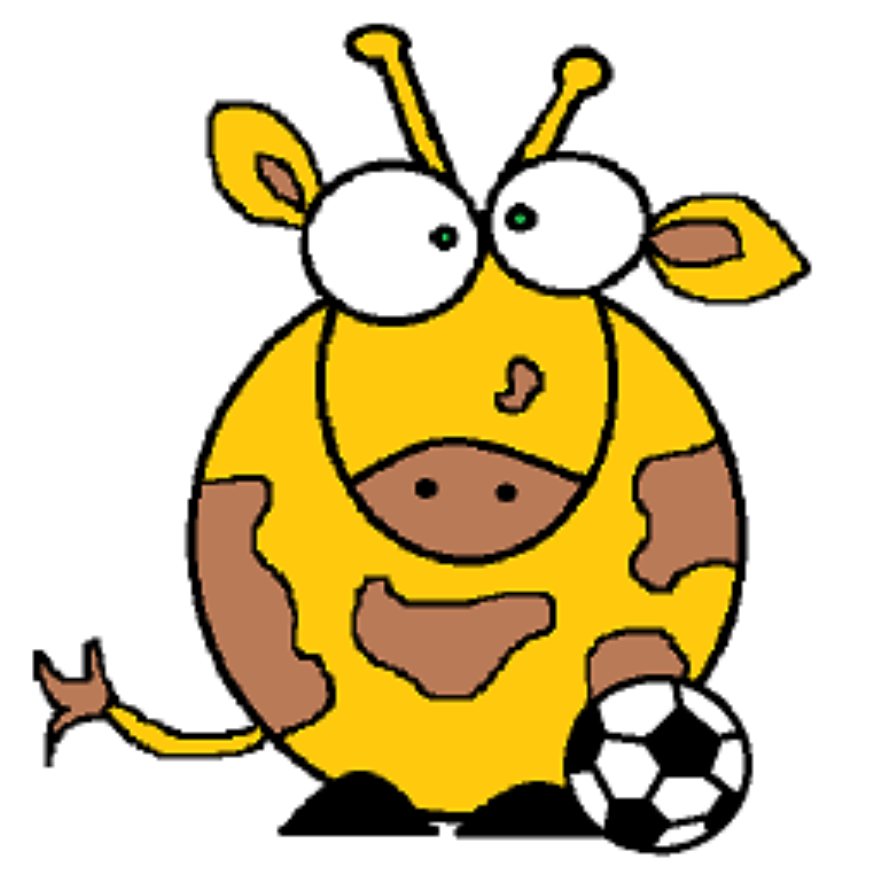}& 
\includegraphics[width=0.3\textwidth]{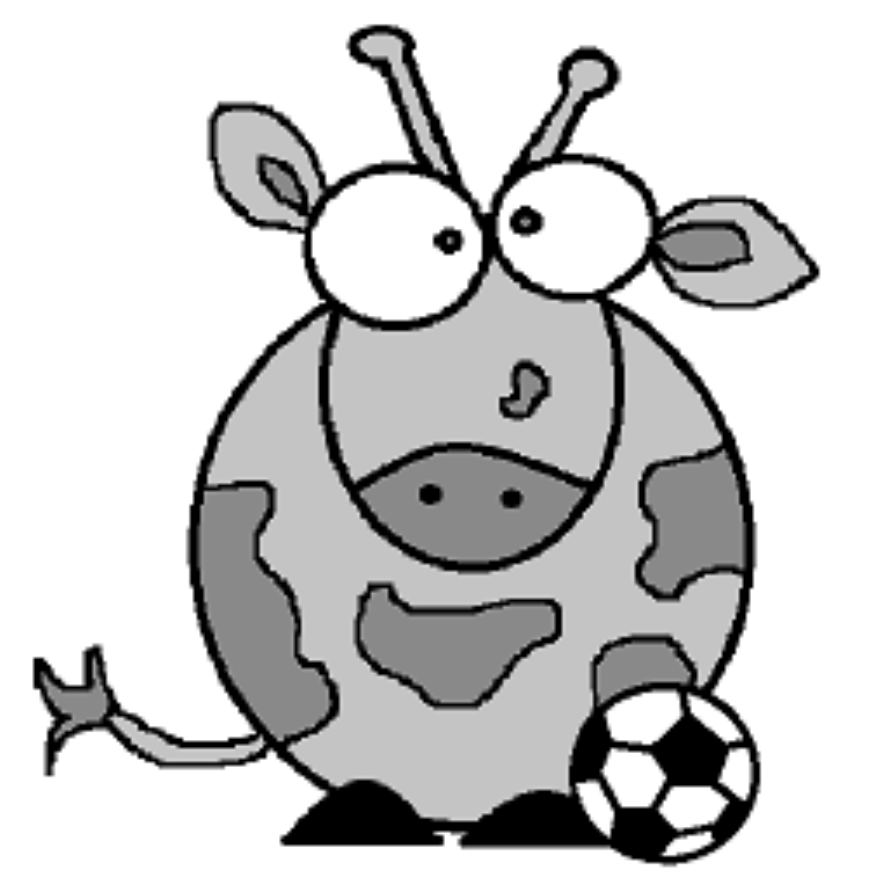}&
\includegraphics[width=0.3\textwidth]{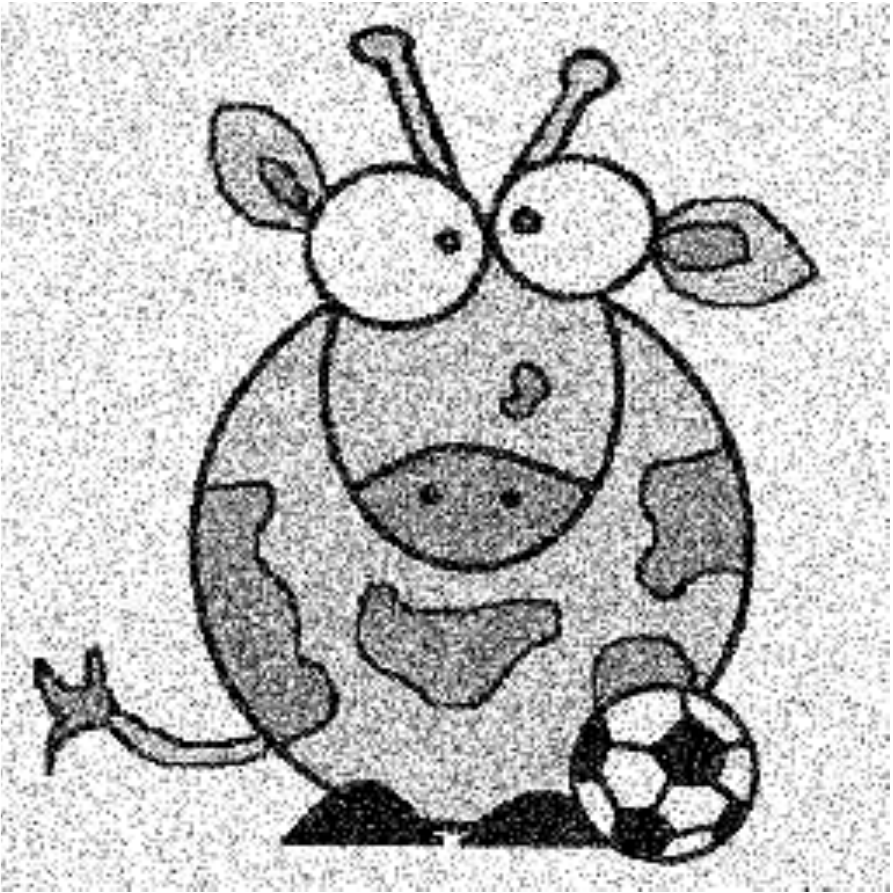}\vspace{1em}\\
TV denoising & Bregman debiasing & ICB debiasing\\ 
\includegraphics[width=0.3\textwidth]{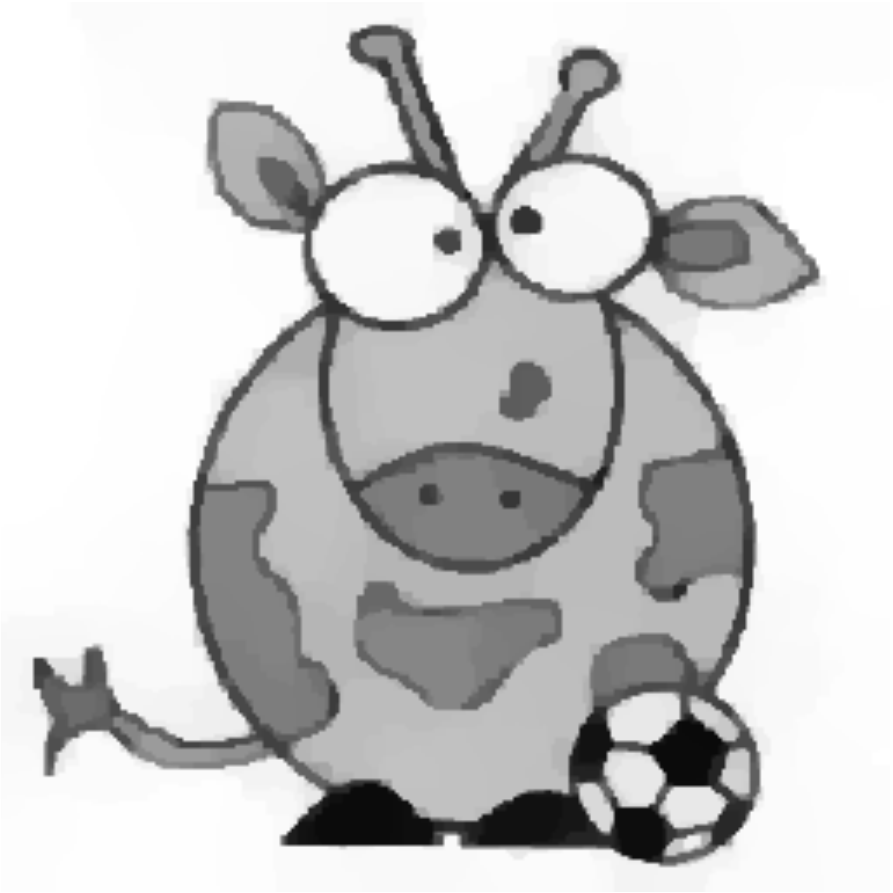}&
\includegraphics[width=0.3\textwidth]{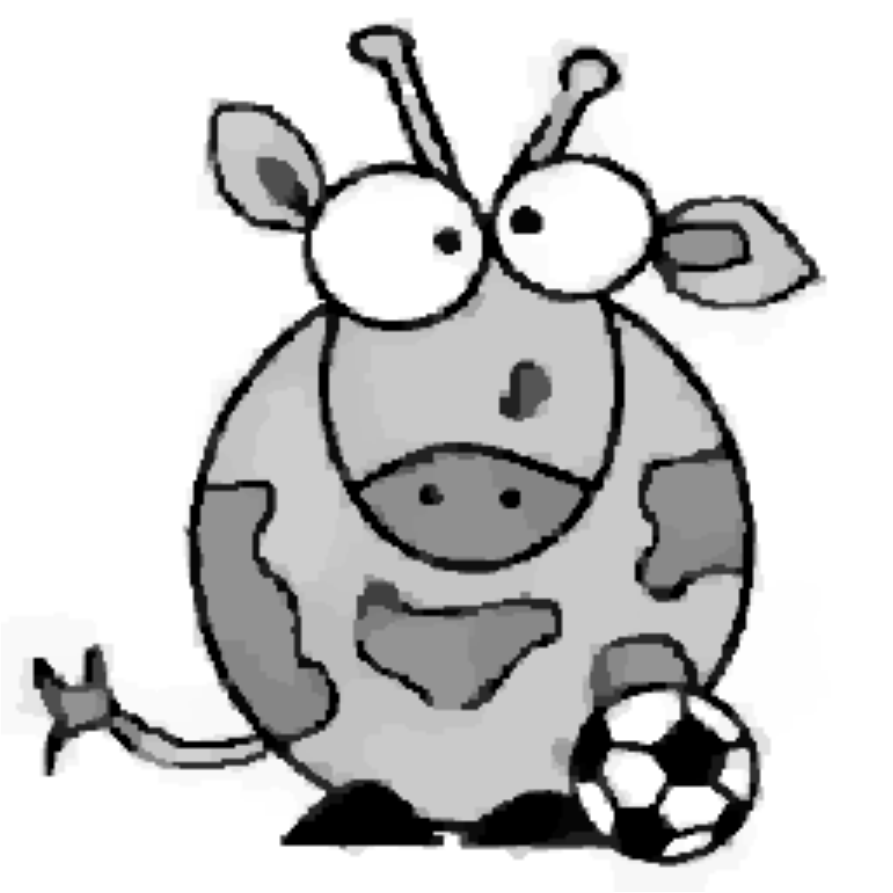}&
\includegraphics[width=0.3\textwidth]{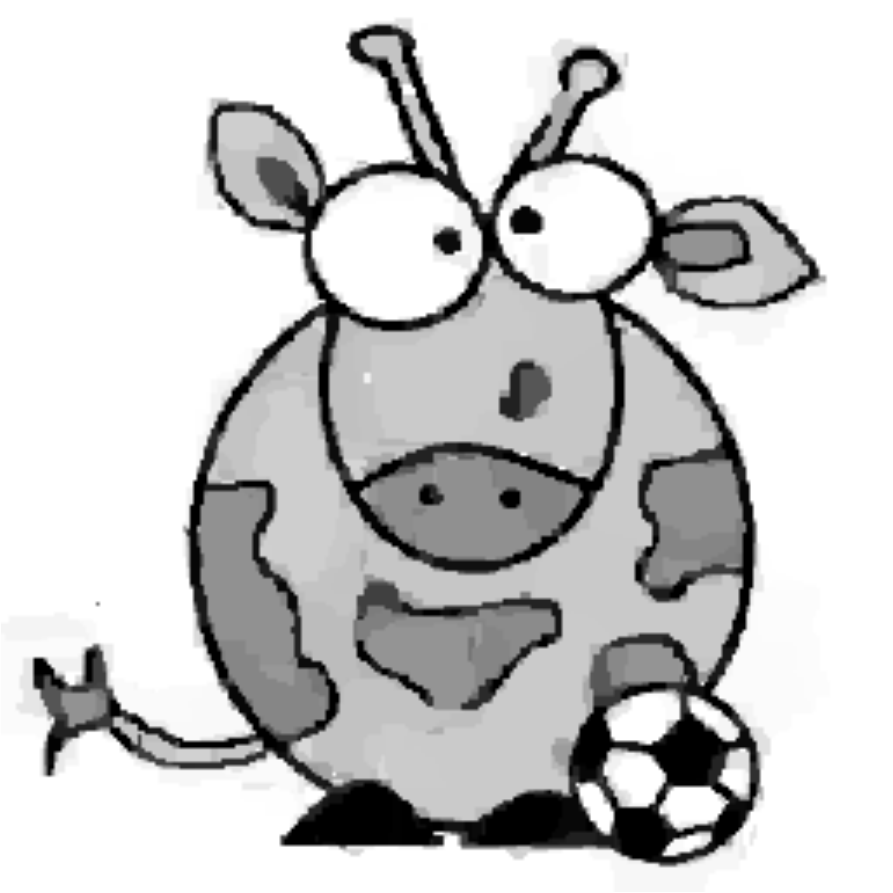}\\
$PSNR = 19.63$ & $PSNR = 22.75$ & $PSNR = 22.70$\\
\end{tabular}
\caption{Denoising of a cartoon image. First row: original image, noisy image corrupted by Gaussian noise.
Second row: TV reconstruction and debiasing using the Bregman distance and its infimal convolution, respectively.
The TV reconstruction recovers well the structures of the images but suffers from a loss of contrast, while the debiased solutions 
allow for a more accurate dynamic.\newline
\footnotesize{$^1$ The color image is provided in order to point out that it is indeed a giraffe and not a cow.}}
\label{fig:firstResults}
\end{figure*}

\section{Debiasing}
\label{sec:Debiasing}

Inspired by the above observations, let us define the following two-step-method for variational regularization on Banach spaces. 
At first we compute a solution $\ua$ of the standard variational method  
\begin{equation}
\label{eq:firstStep}
\begin{alignedat}{5}
&\text{1)} \; \; \; && \ua &&\in \arg \min_{u \in \X} \ \frac{1}{2} \Vert A u - f \Vert_\Y^2 + \alpha J(u),
\end{alignedat}
\end{equation}
where $A \colon \X \to \Y$ is a linear and bounded operator mapping from a Banach space $\X$ to a Hilbert space $\Y$, 
$J \colon \X \rightarrow \mathbb{R} \cup \{\infty\}$ denotes a convex and one-homogeneous regularization functional and $f \in \Y$. We point out that in the following, we will always make the standard identification $\Y^* = \Y$ without further notice.

The first-order optimality condition of \eqref{eq:firstStep} reads:
\begin{equation}
\label{eq:subgradient}
\pa = \frac{1}{\alpha} A^* (f - A\ua), \ \pa \in \partial J(\ua),
\end{equation}
and it is easy to show that this $\pa$ is unique (cf. Section \ref{subsec:well-definedness}, Thm. \ref{existencetheorem}). 
We use this subgradient to carry over information about $\ua$ to a second step.
In the spirit of the previous paragraph the idea is to perform a constrained minimization of the data fidelity term only:
\begin{equation}
\label{eq:secondStepInvConv}
\begin{alignedat}{4}
\begin{split}
&\text{2 a)} && \uhat \in \arg \min_{u \in \X} \; \frac{1}{2} \Vert A u - f \Vert_\Y^2\\
& \; && \; \qquad \text{ s.t. } \ICB(u,\ua)= 0.
\end{split}
\end{alignedat}
\end{equation}

If we reconsider the ad-hoc idea of $\lone$ or TV-type debiasing from the introduction, it can be beneficial to add a sign or direction constraint 
to the minimization, rather than a support condition only. 
This can be achieved by the use of a single Bregman distance. 
Hence it is self-evident to define the following alternative second step:
 \begin{equation}
\label{eq:secondStepBregman}
\begin{alignedat}{4}
\begin{split}
&\text{2 b)} && \uhat \in \arg \min_{u \in \X} \; \frac{1}{2} \Vert A u - f \Vert_\Y^2\\
& \; && \; \qquad \text{ s.t. } {D}^{\pa}_J(u,\ua)= 0.
\end{split}
\end{alignedat}
\end{equation}

We would like to point out that until now we only argued heuristically that the above method actually performs some kind of debiasing for specific problems. 
But since we are able to recover the debiasing method of \cite{deledalle} for $J(u) = \|\Gamma u\|_1$ as a special case, 
at least for this specific choice of regularization (and a finite-dimensional setting) our method is provably a debiasing in their sense. 

However, our method is much more general. 
Since in contrast to \cite{deledalle} it does not depend on a specific representation of $\ua$, it can theoretically be carried out for any suitable regularizer $J$. 
In particular, the method does not even depend on the specific choice of the data term. 
In order to obtain a unique subgradient $\pa$ from the optimality condition it is desirable e.g. to have a differentiable data fidelity,
but if we drop that condition, the data term is theoretically arbitrary. 
Since this generalization requires more technicalities, we focus on a squared Hilbert space norm in this paper in order to work out the basics of the approach.

Before we actually lay a theoretical foundation for our framework
and prove that our method indeed is a debiasing method, 
we show some motivating numerical results and prove the well-definedness of the method.

\subsection{A first illustration}

To give a first glimpse of the proposed method, we revisit the ROF-reconstruction model \eqref{eq:rof} from Section \ref{sec:motivation} and show some numerical results 
in one and two dimensions. 

Taking the subgradient $\pa$ of the TV reconstruction $\ua$ of the one-dimensional signal and performing our debiasing method, we obtain the results in Figure \ref{fig:1DTVsignal2}.
The second step restores the right height of the jumps and yields the same result as the Bregman iterations we performed in Section \ref{sec:motivation}. 

As a second example we perform denoising on a cartoon image corrupted by Gaussian noise. 
The first row of Figure \ref{fig:firstResults} shows the original image and its noisy version.
The left image in the second row is the denoising result obtained with the ROF-model \eqref{eq:rof}. 
We observe that noise has been reduced substantially, but some part of the contrast is lost. 
The second step of our method restores the contrast while keeping the structure of the first solution, 
yielding the two results depicted in the middle and on the right of the second row.

\subsection{Well-definedness of the method}
\label{subsec:well-definedness}
The aim of this section is to show that the method defined above is well-defined, i.e. that there always exists at least one solution to the problem. 
We fix the setup by restricting ourselves to conditions ensuring that the original variational problem \eqref{variationalmethod0} with quadratic data fidelity has a solution. 
The following result can be established by standard arguments:
\begin{theorem} \label{existencetheorem} 
Let $\Y$ be a Hilbert space,  $\X$ be the dual space of some Banach space $\mathcal{Z}$, such that the weak-star convergence in $\X$ is metrizable on bounded sets. 
Moreover, let $A: \X \rightarrow \Y$
be the adjoint of a bounded linear operator $B: \Y \rightarrow \mathcal{Z}$, 
$J$ be the convex conjugate of a proper functional on the predual space $\mathcal{Z}$, 
and let the map $u \mapsto \frac{1}{2}\Vert A u \Vert_\Y^2 + J(u)$ be coercive in $\X$. 
Then the variational problem \eqref{variationalmethod0} with data-fidelity \eqref{quadraticfidelity} has a minimizer $u_\alpha \in \X$ 
and there exists a subgradient $p_\alpha \in \partial J(u_\alpha)$ such that the optimality condition
\begin{equation}
	p_\alpha = \frac{1}\alpha A^* (f - Au_\alpha) = \frac{1}\alpha B(f-Au_\alpha)
	\label{eq:optcond}
\end{equation}
holds. Moreover, if $u_\alpha \neq \tilde u_\alpha$ are two minimizers, then $A u_\alpha = A\tilde u_\alpha$ and the corresponding subgradient is unique, i.e., 
\begin{equation*}
p_\alpha = \frac{1}\alpha  B (f - Au_\alpha) = \frac{1}\alpha  B ( f- A\tilde u_\alpha) = \tilde p_\alpha.
\end{equation*}
\end{theorem}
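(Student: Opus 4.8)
The plan is to run the direct method of the calculus of variations for existence, then extract the optimality condition from convex subdifferential calculus, and finally deduce the uniqueness statement from strict convexity of the fidelity in the variable $Au$. First I would set $E(u) \eqdef \frac12\|Au-f\|_\Y^2 + \alpha J(u)$ and take a minimizing sequence $(u_n)$. Since $J$ is absolutely one-homogeneous and convex it satisfies $J\ge 0$, so $E\ge 0$ is bounded below; expanding the fidelity and bounding the cross term $\langle Au_n,f\rangle_\Y$ by a Young-type estimate converts the coercivity of $u\mapsto \frac12\|Au\|_\Y^2 + J(u)$ into a uniform bound $\|u_n\|_\X \le C$. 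Because $\X=\mathcal{Z}^*$, Banach--Alaoglu makes the closed ball weak-star compact, and the assumed metrizability of the weak-star topology on bounded sets lets me pass to a weak-star convergent subsequence $u_{n_k}\overset{*}{\rightharpoonup}\ua$.

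The next step is weak-star lower semicontinuity of $E$, and here the two structural hypotheses are exactly what is needed. Since $A=B^*$ is an adjoint, it is weak-star-to-weak continuous, so $Au_{n_k}\rightharpoonup A\ua$ in $\Y$ and the term $\frac12\|\cdot-f\|_\Y^2$ is weakly lower semicontinuous as a convex continuous functional; and since $J$ is a convex conjugate it is automatically weak-star lower semicontinuous, being a supremum of weak-star continuous affine maps. Adding these gives $E(\ua)\le\liminf_k E(u_{n_k})=\inf_\X E$, so $\ua$ is a minimizer.

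For the optimality condition I would observe that the fidelity $u\mapsto\frac12\|Au-f\|_\Y^2$ is finite, norm-continuous and Fr\'echet differentiable with derivative $A^*(Au-f)=B(Au-f)\in\mathcal{Z}\subseteq\X^*$. Continuity of this summand lets me apply the Moreau--Rockafellar sum rule, so that $0\in\partial E(\ua)$ reads $A^*(A\ua-f)+\alpha\pa=0$ with $\pa\in\partial J(\ua)$, which is precisely \eqref{eq:optcond}. For the uniqueness claim, the minimizer set is convex and $E$ is constant on it; applying strict convexity of $v\mapsto\frac12\|v-f\|_\Y^2$ to a convex combination of two minimizers $\ua,\tilde u_\alpha$ forces $A\ua=A\tilde u_\alpha$, since otherwise the midpoint would have strictly smaller energy. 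As the optimality condition expresses $\pa$ solely through $A\ua$, the subgradient must then coincide for the two minimizers.

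The main obstacle is the compactness--lower-semicontinuity handshake rather than any single estimate. Because $\X$ is only a dual space and need not be reflexive, I cannot invoke weak compactness directly: everything hinges on combining Banach--Alaoglu with the metrizability assumption so that working with \emph{sequences} (and hence sequential lower semicontinuity) is legitimate, and on the facts that being an adjoint and being a conjugate are what guarantee the requisite weak-star (semi)continuity of $A$ and $J$. The remaining care is in deriving boundedness of the minimizing sequence from the stated coercivity of $\frac12\|A\cdot\|_\Y^2 + J$ despite the linear cross term, which is routine but genuinely uses $J\ge 0$.
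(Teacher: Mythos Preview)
Your existence argument is essentially identical to the paper's: direct method via Banach--Alaoglu plus metrizability, with weak-star lower semicontinuity coming from $J$ being a conjugate and from $A=B^*$ being weak-star-to-weak continuous. One small caveat: you invoke $J\ge 0$ from absolute one-homogeneity to get boundedness of the minimizing sequence, but the theorem as stated only assumes $J$ is a conjugate of a proper functional; the one-homogeneity is a standing assumption elsewhere in the paper, so this is contextually fine but not strictly in the hypotheses. The paper simply says the sublevel set is bounded ``by the coercivity assumption,'' leaving the same cross-term estimate implicit.

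The genuine difference is in the uniqueness step. The paper subtracts the two optimality conditions, writes $w_\alpha-\tilde w_\alpha+\frac{1}{\alpha}A(u_\alpha-\tilde u_\alpha)=0$ with $w_\alpha=\frac{1}{\alpha}(f-Au_\alpha)$, takes the squared norm, and recognizes the cross term as a symmetric Bregman distance; the three resulting nonnegative summands then force $Au_\alpha=A\tilde u_\alpha$ and $w_\alpha=\tilde w_\alpha$. Your route via strict convexity of $v\mapsto\frac12\|v-f\|_\Y^2$ on the convex minimizer set is more elementary and equally valid; it gets $Au_\alpha=A\tilde u_\alpha$ in one line without ever invoking the optimality condition. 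The paper's computation, on the other hand, yields slightly more as a by-product (it also shows the symmetric Bregman distance between any two minimizers vanishes) and foreshadows the bias--variance estimates later in Section~\ref{sec:BiasModelManifolds}. Both arguments then read off $p_\alpha=\tilde p_\alpha$ directly from \eqref{eq:optcond}.
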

\begin{proof}
Since the functional $J$ is proper, there exists a nonempty sublevel set of the functional $u \mapsto \frac{1}2 \Vert A u -f \Vert_\Y^2 + \alpha J(u)$, 
and by the coercivity assumption this sublevel set is bounded. 
The Banach-Alaoglu theorem now implies precompactness of the sublevel set in the weak-star topology. 
Since the latter is metrizable on bounded sets, it suffices to show that the objective functional is sequentially weak-star lower semicontinuous 
in order to obtain existence of a minimizer. 
For the regularization functional $J$, this follows from a standard argument for convex conjugates of proper functionals along the lines of \cite{ekelandtemam}. 
The assumption $A=B^*$ guarantees further that $A$ is continuous from the weak-star topology in $\X$ to the weak topology of $\mathcal{Y}$ 
and the weak lower semicontinuity of the norm also implies the weak-star lower semicontinuity of the data fidelity. 
Those arguments together yield the existence of a minimizer.

The first equation of the  optimality condition for the subgradient $p_\alpha$ follows from the fact that the data fidelity is Fr\'echet-differentiable. 
From the argumentation in \cite[Remark 3.2]{Bredies2013} we see that the assumption $A=B^*$ furthermore implies that $A^*$ indeed maps to the predual space $\mathcal{Z}$ (and not to the bigger space $\mathcal{Z}^{**}$), such that \eqref{eq:optcond} holds true.
More precisely, this special property of $A^*$ is derived from the fact that $A$ is sequentially continuous from the weak-star topology of $\X$ to the weak(-star) topology of $\mathcal{Y}$, which implies that it posseses an adjoint which maps $\Y$ into $\mathcal{Z}$ regarded as a closed subspace of $\mathcal{Z}^{**}$ (note that the weak and the weak-star topology coincide on the Hilbert space $\Y$).
Consequently $\pa \in \mathcal{Z}$.

Finally, assume that $u_\alpha$ and $\tilde u_\alpha$ are two solutions, then we find
\begin{equation*}
p_\alpha = B w_\alpha, \quad w_\alpha = \frac{1}\alpha(f-Au_\alpha),
\end{equation*}
and an analogous identity for $\tilde p_\alpha$ respectively $\tilde u_\alpha$. Consequently, we have
\begin{equation*}
(w_\alpha - \tilde w_\alpha) + \frac{1}\alpha A(u_\alpha - \tilde u_\alpha) = 0.
\end{equation*}
Computing the squared norm of the left-hand side, we find
\begin{align*}
\Vert w_\alpha - \tilde w_\alpha\Vert_{\Y}^2  &+ \frac{2}\alpha \langle p_\alpha - \tilde p_\alpha, u_\alpha - \tilde u_\alpha \rangle\\ 
&+ \frac{1}{\alpha^2} \Vert A(u_\alpha - \tilde u_\alpha)\Vert_{\Y}^2 \quad = 0.
\end{align*}

The dual product can be expressed as a symmetric Bregman distance 
\begin{align*}
 D^{\mathrm{sym}}_J(\ua,\tilde u_\alpha) = D^{\tilde p_\alpha}_J(\ua,\tilde u_\alpha) + D^{\pa}_J(\tilde u_\alpha, u_\alpha).
\end{align*}
Hence all three terms are nonnegative and we find in particular 
$A u_\alpha = A\tilde u_\alpha$, $w_\alpha = \tilde w_\alpha$ and thus $p_\alpha = \tilde p_\alpha$. \QED
\end{proof}
 
By exploiting that $p_\alpha$ lies in the range of $B$ we can prove coercivity and subsequently existence for problem \eqref{eq:secondStepBregman}. 
In fact, we can give a more general result. 

\begin{theorem} 
\label{thm2}
Let the conditions of Theorem \ref{existencetheorem}  hold and let $p \in \partial J(0) \cap \mathcal{Z} \subset \X^*$ be such that there exists $w$ with 
$$ J^*\left(\frac{p - B w}\tau\right) = 0 $$
for some $0 <\tau < 1$.   
Then there exists a minimizer of 
\begin{align*}
\min_{u \in \X} \frac{1}{2}\| Au - f \|^2_{\Y} \text{ s.t. } J(u)-\langle p,u \rangle =0.
\end{align*}
\end{theorem}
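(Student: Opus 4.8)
The plan is to run the direct method of the calculus of variations, exactly in the spirit of the proof of Theorem \ref{existencetheorem}; the only genuinely new ingredient is coercivity of the constrained problem. First I would observe that the feasible set is nonempty: one-homogeneity gives $J(0)=0$, so $u=0$ satisfies $J(0)-\langle p,0\rangle=0$ and the objective equals $\frac12\|f\|_\Y^2<\infty$ there. Hence the infimum $m$ of the constrained problem is finite, and I may fix a minimizing sequence $(u_n)$ with $J(u_n)-\langle p,u_n\rangle=0$ and $\frac12\|Au_n-f\|_\Y^2\to m$.

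The crux is to establish boundedness of $(u_n)$ in $\X$, and this is precisely where the hypothesis on $p$ enters. Since $J$ is convex, proper and (positively) one-homogeneous, its conjugate is the indicator of $\partial J(0)$, so $J^*(\eta)=0$ is equivalent to $\eta\in\partial J(0)$, i.e. to $\langle \eta,u\rangle\le J(u)$ for all $u$. Applying this to $\eta=(p-Bw)/\tau$ and multiplying by $\tau>0$ yields $\langle p,u\rangle-\langle Bw,u\rangle\le \tau J(u)$ for every $u\in\X$. On the constraint set one has $J(u)=\langle p,u\rangle$, so this rearranges to $(1-\tau)J(u)\le\langle Bw,u\rangle$. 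Using $A=B^*$ and the identification $\Y^*=\Y$, the right-hand side equals $\langle Au,w\rangle_\Y\le\|w\|_\Y\|Au\|_\Y$, whence on the feasible set $J(u)\le\frac{\|w\|_\Y}{1-\tau}\|Au\|_\Y$, where $0<\tau<1$ is used in an essential way. Along the minimizing sequence the objective is bounded, so $\|Au_n-f\|_\Y$ and therefore $\|Au_n\|_\Y$ are bounded; the estimate above then forces $J(u_n)$ to be bounded as well. Consequently $\frac12\|Au_n\|_\Y^2+J(u_n)$ is bounded, and the coercivity assumption carried over from Theorem \ref{existencetheorem} yields boundedness of $(u_n)$ in $\X$.

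From here the argument is standard and mirrors Theorem \ref{existencetheorem}. By the Banach--Alaoglu theorem and metrizability of the weak-star topology on bounded sets, a subsequence converges weak-star, $u_n\rightharpoonup^*\bar u$. The data fidelity is weak-star lower semicontinuous (as $A$ is weak-star--to--weak continuous and the Hilbert norm is weakly lower semicontinuous), so $\frac12\|A\bar u-f\|_\Y^2\le m$. It remains to check feasibility of $\bar u$. Because $p\in\mathcal{Z}$ lies in the predual, the functional $u\mapsto\langle p,u\rangle$ is weak-star continuous, while $J$ is weak-star lower semicontinuous as a conjugate; hence $J(\bar u)-\langle p,\bar u\rangle\le\liminf_n\bigl(J(u_n)-\langle p,u_n\rangle\bigr)=0$. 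Since $p\in\partial J(0)$ always gives $J(\bar u)-\langle p,\bar u\rangle\ge 0$, the constraint holds with equality, so $\bar u$ is admissible and therefore a minimizer.

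The only real obstacle is the coercivity step; everything else is routine once boundedness is secured. I expect the delicate points to be bookkeeping rather than conceptual: correctly reading off $J^*$ as the indicator of $\partial J(0)$ from one-homogeneity, and carefully tracking the duality $\langle Bw,u\rangle=\langle Au,w\rangle_\Y$ so that the vanishing-conjugate hypothesis indeed converts into an a priori bound of $J$ by $\|Au\|_\Y$ on the constraint set. The strict inequality $\tau<1$ is what makes this bound nontrivial, and $p\in\mathcal{Z}$ is what keeps the constraint weak-star closed.
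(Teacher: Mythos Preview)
Your proposal is correct and follows essentially the same route as the paper: both arguments establish the key estimate $J(u)\le \frac{\|w\|_\Y}{1-\tau}\|Au\|_\Y$ on the constraint set (the paper via the Fenchel--Young inequality, you via the equivalent characterization $J^*=\iota_{\partial J(0)}$), and then invoke the coercivity of $u\mapsto \frac12\|Au\|_\Y^2+J(u)$ together with the weak-star compactness and lower semicontinuity machinery from Theorem~\ref{existencetheorem}. You are in fact more explicit than the paper about why the feasible set is weak-star closed (using $p\in\mathcal{Z}$ for weak-star continuity of $\langle p,\cdot\rangle$ and weak-star lower semicontinuity of $J$), a point the paper leaves implicit in ``the remaining steps follow the proof of Theorem~\ref{existencetheorem}.''
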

\begin{proof}
Let $\mathcal{A} = \{ u \in \X ~|~ J(u)-\langle p,u \rangle =0 \}$ be the admissible set.
Since $0 \in \mathcal{A}$ we can look for a minimizer in the sublevel set 
\begin{align*}
  S = \big\{ u \in \mathcal{A} ~|~ \| Au -f \|_{\Y} \leq \| f \|_{\Y} \big\}.
\end{align*}
By the triangle inequality we have 
$\| Au \|_{\Y} \leq 2 \| f \|_{\Y}$ and hence $\frac{1}{2}\| Au \|^2_{\Y} \leq 2 \| f \|^2_{\Y}$ on $S$. Accordingly, $u \mapsto \frac{1}{2}\|Au\|^2_{\Y}$ is bounded on $S$. 
From the definition of the convex conjugate we know that for all $u \in \X, r \in \X^*$ we have 
\begin{align}
\langle r,u \rangle \leq J^*(r) + J(u).
\label{eq:def_convconj}
\end{align}
Hence for $u \in S$ we find 
\begin{align*}
 J(u) &= \langle p,u \rangle \\
 &= \langle p - Bw,u \rangle + \langle w, Au \rangle \\
 &\leq \langle \frac{p-B w}{\tau}, \tau u \rangle + \| w \|_{\Y} \|Au\|_{\Y}\\
 &\leq J^* \left(\frac{p-B w}{\tau} \right) + J(\tau u) + \| w \|_{\Y} \|Au\|_{\Y}
\end{align*}
which implies by the one-homogeneity of $J$ that
\begin{align*}
 J(u) \leq \frac{\| w \|_{\Y} ~ \| Au \|_{\Y}}{1-\tau}.
\end{align*}
Thus we obtain the boundedness of $u \mapsto \frac{1}{2}\| Au \|^2_{\Y} + J(u)$ on $S$. 
%
%
The remaining steps follow the proof of Theorem \ref{existencetheorem}. \QED
\end{proof}
 
Note that, provided that the operator $A$ fulfills the conditions of Theorem \ref{existencetheorem}, the assumptions of Theorem \ref{thm2} always hold for $p=\pa$ obtained from \eqref{eq:subgradient} 
with $w=\frac{1}\alpha(f-A\ua)$ and $\tau$ arbitrarily small, 
hence we conclude the existence of a minimizer $\uhat$ of \eqref{eq:secondStepBregman}. 

The situation for \eqref{eq:secondStepInvConv} is less clear, since there is no similar way to obtain coercivity. 
As we shall see in Section \ref{sec:BiasModelManifolds}, \eqref{eq:secondStepInvConv} consists in minimizing a 
quadratic functional over a linear subspace, which immediately implies the existence of $\uhat$ if $\X$ has finite dimensions.
In an infinite-dimensional setting we cannot provide an existence result in general, 
since there is neither a particular reason for the subspace to be closed nor for the quadratic functional to be coercive 
(in ill-posed problems we typically deal with an operator $A$ with nonclosed range).

\subsection{Optimal debiasing on singular vectors}

In the following we work out the behavior of the debiasing method on singular vectors \cite{benninggroundstates}, 
which represent the extension of the concept of classical singular value decomposition to nonlinear regularization functionals. 
According to \cite{benninggroundstates}, $u^\lambda \in \X$ is a singular vector if for some $\lambda > 0$ 
\begin{equation*}
	\lambda A^* A u^\lambda \in \partial J(u^\lambda)
\end{equation*}
holds. 
Without going too much into detail, singular vectors can be considered as generalized ``eigenfunctions'' of the regularization functional $J$. 
As such, they describe a class of exact solutions to problem $\eqref{eq:firstStep}$ in the following sense:

Let us consider a multiple $c u^\lambda$ of such a singular vector for $c > \lambda \alpha$. According to \cite{benninggroundstates}, the solution $\ua$
of the variational problem \eqref{eq:firstStep} for data $f=c Au^\lambda$ is given by
\begin{equation*}
	\ua = (c- \alpha \lambda) u^\lambda,
\end{equation*}
and the subgradient from the optimality condition is 
\begin{align*}
 \pa = \lambda A^* A u^\lambda \in \partial J(\ua).
\end{align*}
Hence $\ua$ recovers $c u^\lambda$ up to a (known) scalar factor $\alpha \lambda$ and shares a subgradient with $u^\lambda$. 
This means that the variational method leaves the singular vector basically untouched, which allows for its exact recovery. 
Intuitively, the quantity $-\lambda \alpha u^\lambda$ hence represents the bias of the variational method in this case, 
which should be removed by our debiasing method \eqref{eq:secondStepBregman}. 
And indeed we obtain $\uhat =c u^\lambda$ as a minimizer of \eqref{eq:secondStepBregman}, since 
\begin{align*}
 \| A\uhat - f\|_{\Y} = \|A(\uhat - cu^\lambda)\|_{\Y} = 0 
\end{align*}
and since $\uhat$ lies in the admissible set due to the shared subgradient.
If $A$ has trivial nullspace, $\uhat$ is even unique. 
Hence, the debiasing strategy leads to the exact reconstruction of the solution and corrects the bias 
$-\lambda \alpha u^\lambda$. 
Note that this is indeed an important result, since if the debiasing method failed for singular vectors it would be doubtful 
whether the method is reliable in general. 

Since the infimal convolution of Bregman distances is nonnegative and less or equal than either of the Bregman distances, 
it also vanishes at $\uhat =c u^\lambda$. 
In particular 
\begin{align*}
 \ICB(c u^\lambda, \ua) &\leq D_J^{\pa}( c u^\lambda, \ua) \\ 
 &= J(c u^\lambda) - \langle \pa, c u^\lambda \rangle = 0. 
\end{align*}
Consequently, $\uhat$ is also a solution of \eqref{eq:secondStepInvConv}.

\section{Bias and Model Manifolds}
\label{sec:BiasModelManifolds}
 
In the following we provide a more fundamental discussion of bias and decompositions obtained by debiasing methods. 
An obvious point to start is the definition of bias, which is indeed not always coherent in the imaging literature with the one in statistics. 

\subsection{Definitions of bias} \label{biasvariancesection}

We first recall the classical definition of bias in statistics.
Let $f$ be a realization of a random variable modeling a random noise perturbation of clean data $f^*=Au^*$, such that $\E [f] = f^*$.
If we consider a general canonical estimator $\Uhat(f)$, the standard definition of bias in this setup is given by 
\begin{align}
\begin{split}
  \mathbb{B}^{\text{stat}}(\Uhat) &= \E[u^*-\Uhat(f)]\\
  &= u^*-\E[\Uhat(f)].
  \label{eq:stat_bias}
\end{split}
\end{align}
Unfortunately, this bias is hard to manipulate for nonlinear estimators. 
Hence, we consider a deterministic definition of bias, which relies on the clean data $f^*$: 
\begin{align}
\begin{split}
 \mathbb{B}^*(\Uhat) &= \E[u^*-\Uhat(f^*)]= u^* - \Uhat(f^*)\\
 &=  u^*-\Uhat(\E[f]).
 \label{eq:det_bias}
\end{split}
\end{align}
We immediately note the equivalence of the two definitions in the case of linear estimators, but our computational experiments do not show a significant difference between $\mathbb{B}^{\text{stat}}$ and $\mathbb{B}^*$ even for highly nonlinear variational methods.
In general, the purpose of debiasing is to reduce the quantitative bias $B^d$, i.e. here the error between $u^*$ and $\Uhat(f^*)$ in an appropriate distance measure $d$: 
\begin{align*}
 B^d(\Uhat(f^*)) = d(\Uhat(f^*), u^*).
\end{align*}

Let us consider the specific estimator $\ua(f^*)$, 
i.e. the solution of problem \eqref{eq:firstStep} with clean data $f^*$.
As already argued in Section \ref{sec:motivation}, it suffers from a certain bias due to the chosen regularization. 
Following \cite{deledalle}, this bias can be decomposed into two parts.
The first part is related to the regularization itself, and it occurs if the assumption made by the regularization does not match the true object that we seek to recover. 
For example, trying to recover a piecewise linear object using TV regularization leads to the staircasing effect due to the assumption of a piecewise constant solution.
This part of the bias is unavoidable since it is inherent to the regularization, and it is referred to as {\em model bias}.
In particular, we cannot hope to correct it.

However, even if the regularity assumption fits, the solution still suffers from a systematic error due to the weight on the regularization.
For TV regularization for example, this is the loss of contrast observed in Section \ref{sec:motivation}.
This remaining part is referred to as {\em method bias}, and this is the part that we aim to correct.
As we shall see in the remainder of the section, the estimator $\ua(f^*)$ provides the necessary information to correct this bias.
Deledalle et al. \cite{deledalle} define an appropriate linear model subspace related to that estimator, on which the debiasing takes place. 
It allows to define the model bias as the difference between $u^*$ and its projection onto the model subspace. 
The remaining part of the difference between the reconstructed solution and $u^*$ is then the method bias. 
In the following we reintroduce the notion of model subspaces provided by \cite{deledalle} 
and further generalize it to the variational setting in infinite dimensions. 
The latter may imply the nonclosedness of the model subspace and hence nonexistence of the projection of $u^*$ onto it. 
Moreover, it seems apparent that in some nonlinear situations it might be more suitable to 
consider a model manifold instead of a linear space and we hence generalize the definition in this direction. 
We remark that the use of the term manifold is for technical reasons. 
As we shall see, the sets we consider in the course of the paper are for example (linear) subspaces or convex cones. 
The latter are not linear, but can be considered as manifolds with boundaries.
Therefore we shall use the term model manifold in general, and be more precise for particular instances of model manifolds.  

Let us first assume that we are already given an appropriate model manifold.

\begin{definition}
 Let ${\cal M}_{f^*}$ be a given model manifold and $d\colon \X \times \X \to [0,\infty)$ a distance measure. 
An estimator $\Uhat(f^*)$ of $u^*$ is a {\em debiasing} of $\ua(f^*)$ if $\Uhat(f^*) \in \mathcal{M}_{f^*}$ and 
\begin{align*}
d(\Uhat(f^*),u^*) < d(\ua(f^*),u^*). 
\end{align*}
If there exists a minimizer
\begin{align}
\label{eq:opt_debiasing}
\uhat(f^*) \in \arg \min_{v \in {\cal M}_{f^*}} d(v,u^*), 
\end{align}
we call it an {\em optimal debiasing}.
In any case, we define the {\em magnitude of the model bias} as 
\begin{align*}
 B_{\text{mod}}^d({\cal M}_{f^*}) = \inf_{v \in {\cal M}_{f^*}} d(v,u^*).
\end{align*} 
\end{definition}
Obviously the model bias only depends on the model manifold and for a given $\ua(f^*)$ it is hence, as already indicated, a fixed quantity that we cannot manipulate.
Instead we want to perform the debiasing on the manifold only, so we consider another bias for elements of ${\cal M}_{f^*}$ only.
Since according to the above definition there might exist more than one optimal debiasing, we shall from here on assume that we are given one of them. 
\begin{definition}
For a fixed optimal debiasing $\uhat(f^*)$ on ${\cal M}_{f^*}$, we define the {\em magnitude of the method bias} of $v \in {\cal M}_{f^*}$ 
 related to $\uhat(f^*)$ as 
 \begin{align*}
  B_{\text{meth}}^d(v) = d(v, \uhat(f^*)).
 \end{align*}
\end{definition}
The optimal debiasing $\uhat(f^*)$ obviously does not suffer from  method bias. 
Note that if the minimizer in \eqref{eq:opt_debiasing} does not exist, 
which can happen in particular in ill-posed problems in infinite dimensions,  
then the magnitude of the method bias is not well-defined or has to be set to $+\infty$.  

With these definitions at hand, we now aim to compute an optimal debiasing, i.e. the solution of \eqref{eq:opt_debiasing}. 
The remaining questions are how to choose an appropriate model manifold $\mathcal{M}_{f^*}$ and the distance measure $d$. 
We start with the latter.
An easy choice for the distance measure $d$ is a squared Hilbert space norm:
If the minimizer of \eqref{eq:opt_debiasing} exists, e.g. if $\mathcal{M}_{f^*}$ is nonempty, convex and closed,
the optimal debiasing $\uhat(f^*)$ is the (unique) projection of $u^*$ onto $\mathcal{M}_{f^*}$. 
We obtain a decomposition of the bias of any estimator $v \in \mathcal{M}_{f^*}$ into method and (constant) model bias: 
\begin{align*}
 v - u^* = \underbrace{ v - \uhat(f^*) }_{\text{method bias}} + \underbrace{ \uhat(f^*) - u^* }_{\text{model bias}}. 
\end{align*} 
In case $\mathcal{M}_{f^*}$ is a closed subspace of $\X$, this decomposition is even orthogonal, i.e. 
\begin{align*}
 B^d(v) &= \|v - u^*\|^2 \\
 &= \| v - \uhat(f^*) \|^2 + \| \uhat(f^*) - u^* \|^2 \\
 &= B_{\text{meth}}^d(v) + B_{\text{mod}}^d({\cal M}_{f^*}). 
\end{align*} 
Unfortunately, for general inverse problems with a nontrivial operator we do not know $u^*$ and hence cannot compute its projection onto $\mathcal{M}_{f^*}$. 
Instead we have access to the data $f^* = Au^*$ (or rather to one noisy realization $f$ of $f^*$ in practice, which we discuss later).
In order to make the bias (and the associated debiasing) accessible, we can consider bias through the operator $A$.
Hence the optimal debiasing comes down to computing the minimizer of \eqref{eq:opt_debiasing} with a distance defined over $A (\mathcal{M}_{f^*})$, i.e. 
\begin{align}
	\uhat(f^*) &= \arg \min_{v \in {\cal M}_{f^*}} \|Au^* - Av\|^2 \nonumber \\
	&= \arg \min_{v \in {\cal M}_{f^*}} \|f^* - Av\|^2. 
	\label{eq:opt_output_debiasing}
  \end{align}
Correspondingly, if such a minimizer $\uhat(f^*)$ exists, we measure the magnitude of model and method bias in the output space, rather than in image space, i.e. 
\begin{align*}
 & B_{\text{mod}}^d({\cal M}_{f^*}) = \inf_{v \in {\cal M}_{f^*}} \|Av - f^*\|^2, \\
 & B_{\text{meth}}^d(v) = \| A\uhat(f^*) - Av\|^2.
\end{align*} 
We can hence at least guarantee that the optimal debiasing has zero method bias in the output space.
For denoising problems without any operator ($A$ being the identity), or for $A$ invertible on $\mathcal{M}_{f^*}$ 
we obtain the equivalence of both approaches. 
In ill-posed inverse problems it is usually rather problematic to measure errors in the output space, since noise can also be small in that norm. 
Notice however that we do not use the output space norm on the whole space, but on the rather small model manifold, on which - if chosen appropriately - the structural components dominate. 
On the latter the output space norm is reasonable.

The main advantage of this formulation is that we are able to compute a minimizer of \eqref{eq:opt_output_debiasing}, since it is in fact a constrained least-squares problem with the data fidelity of \eqref{eq:firstStep}.
Its solution of course requires a proper choice of the underlying model manifold $\mathcal{M}_{f^*}$, which we discuss in the following. 

\subsection{Model manifolds}

In general, a model manifold can be characterized as the space of possible solutions 
for the debiasing step following the first solution $\ua(f)$ of the variational problem \eqref{eq:firstStep}.
As such it contains the properties of $\ua(f)$ that we want to carry over to the debiased solution. 
In the context of sparsity-enforcing regularization this is basically a support condition on the debiased solution. 
\subsubsection{Differential model manifolds}
Deledalle et al. \cite{deledalle} use the notion of Fr\'echet derivative to define their model subspace in a finite-dimensional setting.
We naturally generalize this concept using the directional derivative instead, and further extend it to infinite dimensions.
The following definitions can e.g. be found in \cite{shapiro}.
\begin{definition}
Let $\V$ and $\W$ be Banach spaces. 
A mapping $F \colon \V \to \W$ is called Fr\'echet differentiable at $x \in \V$ if there exists a linear and bounded operator $\Der F(x;\cdot) \colon \V \to \W$ such that 
 \begin{align*}
  \lim_{\|g\|_{\V} \to 0} \dfrac{\| F(x + g) - F(x) + \Der F(x;g) \|_\W}{\|g\|_{\V}} = 0.
 \end{align*}
\end{definition}
\begin{definition}
A mapping $F \colon \V \to \W$ is called directionally differentiable in the sense of G\^ateaux at $x \in \V$ if the limit 
\begin{align*}
 \der F(x;g) := \lim_{t \to 0^+} \dfrac{F(x + tg) - F(x)}{t} 
\end{align*}
exits for all $g \in \V$. 
\end{definition}
We can immediately deduce from the definition that, if the directional derivative $\der F(x;\cdot)$ exits, it is positively one-homogeneous in $g$, i.e. 
\begin{align*}
 \der F(x;\lambda g) = \lambda \der F(x;g) 
\end{align*}
for all $\lambda \geq 0$ and $g \in \V$. 
If it is linear in $g$, we call $F$ G\^ateaux differentiable at $x$.

Provided a unique and Fr\'echet differentiable map $f \mapsto \ua(f)$, Deledalle et al. \cite{deledalle} 
introduce the tangent affine subspace 
\begin{equation*}
\MF = \big\{\ua(f) + \Der \ua (f;g) ~|~  g \in \Y ~\big\},
\end{equation*}
where $\Der \ua (f;\cdot)\colon \Y \to \X$ is the Fr\'echet derivative of $\ua(f)$ at $f$.  
To be less restrictive, the easiest generalization of $\MF$ is to consider the directional derivative.
\begin{definition}
If the map $f \mapsto \ua(f)$ is directionally differentiable with derivative $\der u_\alpha (f; \cdot)$, we define
\begin{equation*}
	\MG =  \big\{\ua(f) + \der \ua(f;g) ~|~  g \in {\cal Y} ~ \big\}.
\end{equation*}
\end{definition}
Note that if the map is Fr\'echet differentiable, $\MG$ is a linear space and coincides with the model subspace $\MF$. 

We now derive a few illustrative examples that we use throughout the remainder of the paper.
In order to keep it as simple as possible, the easiest transition from the finite-dimensional vector space setting to infinite dimensions
are the $\ellp$-spaces of $p$-summable sequences: 
\begin{definition}
For $1 \leq p < \infty$ we define the spaces $\ellp$ of $p$-summable sequences with values in $\R^d$ by 
\begin{align*}
 \ellp(\R^d) = \big\{(x_i)_{i \in \N}, x_i \in \R^d : \sum_{i \in \N} | x_i|^p < \infty \big\}, 
\end{align*}
where $| \cdot |$ denotes the Euclidean norm on $\R^d$. 
For $p = \infty$ we define 
\begin{align*}
 \linf(\R^d) = \big\{(x_i)_{i \in \N}, x_i \in \R^d : \sup_{i \in \N} | x_i| < \infty \big\}.
\end{align*}
It is easy to show that $\ellp(\R^d) \subset \ell^q(\R^d)$ for $1 \leq p \leq q \leq \infty$.
In particular for $d=1$ we denote by $\lone$, $\ltwo$ and $\linf$ the spaces of summable, square-summable and bounded scalar-valued sequences.
\end{definition}

\begin{example}
\label{ex:aniso_shrinkage}
{\it Anisotropic shrinkage}.
Let $f \in \ltwo$ be a square-summable sequence.
The solution of 
\begin{align}
 \ua(f) \in \arg \min_{u \in \lone} \dfrac{1}{2} \|u-f\|_{\ltwo}^2 + \alpha \|u\|_{\lone}
 \label{eq:anisotropic_l1}
\end{align}
for $\alpha > 0$ is given by 
\begin{align*}
 [\ua(f)]_i = \begin{cases}
               f_i - \alpha ~ \sign(f_i), & |f_i| \geq \alpha, \\
               0, & |f_i| < \alpha.
              \end{cases}
\end{align*}
Its support is limited to where $|f_i|$ is above the threshold $\alpha$.
The directional derivative $\der \ua (f;g)$ of $\ua(f)$ into the direction $g \in \ltwo$ is given by 
\begin{align*}
 [\der \ua (f;g)]_i& \\ = &\begin{cases}
                       g_i, & |f_i| > \alpha \\
                       0, & |f_i| < \alpha \\
                       g_i, & |f_i| = \alpha, \sign(f_i) = \sign(g_i) \\
                       0, & |f_i| = \alpha, \sign(f_i) \neq \sign(g_i).
                      \end{cases}
\end{align*}
\begin{proof}
See Appendix \ref{app:ex_iso_shrinkage}.
\end{proof}
First, if we exclude the case $|f_i| = \alpha$, the directional derivative is linear, hence it is a G\^ateaux derivative. 
In fact it is even an infinite-dimensional Fr\'echet derivative, and the resulting model manifold coincides with the model subspace defined in finite dimensions in \cite{deledalle}:
\begin{equation*}
\MF = \big\{ u \in \ltwo  ~|~ \supp(u) \subset \supp(\ua(f)) \big\}.
\end{equation*}
The model manifold carries over information about the support of the first solution $\ua(f)$. 
Note that $\MF$ contains all elements of $\ltwo$ which share the same support as $\ua(f)$, 
but as well allows for zeros where $\ua(f) \neq 0$. 
In that sense $\ua(f)$ defines the maximal support of all $u \in \MF$. 
If we allow $|f_i|$ to be equal to $\alpha$, we obtain a larger set which allows for support changes in the direction of $f_i$ on the threshold: 
\begin{align*}
u \in \MG \Leftrightarrow u_i = \begin{cases}
\lambda \in \mathbb{R}, &  |f_i|> \alpha, \\
0, &  |f_i|< \alpha, \\
\lambda \geq 0 , & f_i = \alpha,   \\
\lambda \leq 0 , & f_i = - \alpha.
\end{cases} 
\end{align*}
Note that the case $|f_i| > \alpha$ reveals a remaining shortcoming of the definition via the directional derivative, 
e.g. if $f_i> \alpha$ it is counter-intuitive to allow for negative elements in $\MG$, while this is not the case for $f_i = \alpha$. 
The main reason appears to be the strong deviation of the linearization in such directions from the actual values of $\uai$, 
which is not controlled by the definition.
However, minimizing the data term over $\MG$ for the debiasing in Eq. \eqref{eq:opt_output_debiasing} forces the changes to have the right sign and the debiased solution $\uhat(f)$ corresponds to hard-thresholding:
\begin{align*}
 [\uhat(f)]_i = \begin{cases}
              f_i, & |f_i| \geq \alpha, \\
              0, & |f_i| < \alpha.
             \end{cases}
\end{align*}
Note that we as well maintain the signal directly on the threshold. 

\end{example}

We obtain analogous results for isotropic shrinkage, i.e. if $f \in \ltwo(\R^d)$ for $d > 1$. 
Since the computation of the derivative requires a little more work, we provide the results in Appendix \ref{app:ex_iso_shrinkage}.
A more interesting example is the model manifold related to anisotropic $\lone$-regularized general linear inverse problems. 
\begin{example}
\label{ex:aniso_l1}
{\it Anisotropic $\lone$-regularization}.
For $r > 1 $ let $A \colon \ell^r \to \ltwo$ be a linear and bounded operator and $f \in \ltwo$.
Consider the solution $\ua(f)$ of the $\lone$-regularized problem
\begin{align}\label{eq:aniso_l1}
 \ua(f) \in \arg \min_{u \in \lone} \frac{1}{2} \| Au - f\|_{\ltwo}^2 + \alpha \|u \|_{\lone},
\end{align}
where we assume that the solution is unique for data in a neighborhood of $f$. 
Computing the directional derivative directly is a more tedious task in this case, but computing the 
model manifold $\MG$ is actually easier via a slight detour. 

Let $\ua(f)$ be the solution for data $f$ and $\ua(\tilde{f})$ the solution for data $\tilde f$. 
First, we derive an estimate on the two subgradients from the optimality conditions
\begin{align*}
0 &= A^* ( A \ua(f) - f) + \alpha \pa, \hspace{0.5em} \pa \in \partial \| \ua(f) \|_{\lone}, \\
0 &= A^* ( A \ua(\tilde{f}) - \tilde{f}) + \alpha \tilde{p}_{\alpha}, \hspace{0.5em} \tilde{p}_{\alpha} \in \partial \| \ua(\tilde{f}) \|_{\lone}.  
\end{align*}
Following the ideas of \cite{bur07}, we subtract the two equations and multiply by $\ua(f) - \ua(\tilde{f})$ to arrive at 
\begin{align*}
 \|A &\ua(f) - A \ua(\tilde{f}) \|_{\ltwo}^2 \\ 
 &+ \alpha \langle \pa - \tilde{p}_{\alpha}, \ua(f) - \ua(\tilde{f}) \rangle \\ 
 &= \langle f - \tilde{f},A \ua(f) - A \ua(\tilde{f}) \rangle \\
 &\leq \frac{1}{2} \| f - \tilde{f} \|_{\ltwo}^2 + \frac{1}{2} \|A \ua(f) - A \ua(\tilde{f})\|_{\ltwo}^2. 
\end{align*}
The last line follows from the Fenchel-Young inequality, obtained by applying the inequality \eqref{eq:def_convconj} to $J = \frac{1}{2} \|\cdot\|_{\ltwo}^2$.
The second term on the left hand side is a symmetric Bregman distance, i.e. the sum of two Bregman distances (cf. \cite{bur07}), hence positive. 
Leaving it out and rearranging then yields
\begin{align}
 \|A \ua(f) &- A \ua(\tilde{f}) \|_{\ltwo} \leq \| f - \tilde{f} \|_{\ltwo}.
 \label{eq:standard_estimate}
\end{align}
Since $A^* \colon \ltwo \to \ell^s$, where $s^{-1} + r^{-1} = 1$, $A^*$ is also continuous to $\linf$, hence we derive the following estimate from the optimality conditions:
\begin{align*}
 &\| \pa - \tilde p_\alpha \|_{\linf} \\ 
 &= \frac{1}{\alpha} \|A^*(A \ua(f) - A \ua(\tilde{f})) - A^*(f - \tilde{f}) \|_{\linf} \\
 &\leq \frac{\|A^*\|}{\alpha} \|A \ua(f) - A \ua(\tilde{f})\|_{\ltwo} \\
 &+ \frac{\|A^*\|}{\alpha} \| f - \tilde{f} \|_{\ltwo} \\
 &\leq \frac{C}{\alpha} \| f - \tilde f \|_{\ltwo}, 
\end{align*}
where we used \eqref{eq:standard_estimate} for the last inequality and $\|\cdot \|$ denotes the operator norm.

Next, we note that since $A^*$ maps to $\ell^s$ and $\pa$ and $\tilde p_\alpha$ lie in its range, they necessarily have to converge to zero.
This implies the existence of $N \in \N$ such that for all $i \geq N$ both $|(\pa)_i|$ and $|(\tilde p_\alpha)_i|$ are strictly smaller than $1$ and hence $\ua(f)$ and $\ua(\tilde{f})$ vanish for all $i \geq N$. 
As a consequence it is sufficient to consider a finite dimensional setting for the following reasoning.

In view of the subdifferential of the $\lone$-norm,
\begin{align*}
 \partial \|u\|_{\lone} = \{ &p \in \linf : \|p\|_{\linf} \leq 1, \\
 &p_i = \sign(u_i) \text{ for } u_i \neq 0 \},
\end{align*}
we have to consider several cases. 
If $\uai = 0$ and $|(\pa)_i| < 1$, we derive from
\begin{align*}
 |(\tilde{p}_{\alpha})_i| 
 &\leq |(\tilde{p}_{\alpha})_i - (\pa)_i| + |(\pa)_i| \\
 &\leq \frac{C}{\alpha} \| f - \tilde f \|_{\ltwo} + |(\pa)_i|, 
\end{align*}
that if $\| f - \tilde f \|_{\ltwo}$ is sufficiently small, then $|(\tilde{p}_{\alpha})_i| < 1$. 
Hence $[\ua(\tilde{f})]_i = 0$, and the derivative related to the perturbed data $\tilde{f}$ vanishes.
In case $\uai = 0$ and $(\pa)_i = 1$, by a similar argument $(\tilde{p}_{\alpha})_i \neq -1$ and thus $[\ua(\tilde{f})]_i \geq 0$ and $[\der \ua(f;g)]_i \geq 0$.
Analogously, $[\der \ua(f;g)]_i \leq 0$ if $\uai = 0$ and $(\pa)_i = -1$.
If $\uai \neq 0$, the directional derivative is an arbitrary real number depending on the data perturbation. 
Summing up we now know that every directional derivative is an element $v \in \lone$ fulfilling 
\begin{align}
\label{eq:viproperties}
v_i = \begin{cases}
  0, &|(\pa)_i| < 1,\\
\lambda \geq 0, &   (\ua)_i = 0, (\pa)_i = 1 , \\
\lambda \leq 0, &  (\ua)_i = 0, (\pa)_i = -1.
\end{cases} 
\end{align}
Note again that $v$ differs from $0$ only for a finite number of indices.
Hence, for $v$ satisfying \eqref{eq:viproperties}, we can pick $t > 0$ sufficiently small such that $\pa$ is a subgradient of $\tilde u = \ua(f) + t v$.
Indeed, for example if $(\ua)_i = 0$ and $(\pa)_i = 1$, then $v_i \geq 0$, so $\tilde u \geq 0$, and hence $(\pa)_i$ fulfills the requirement of a subgradient of $\tilde u$.
The other cases follow analogously.
Then from the optimality condition of $\ua(f)$ we get:
\begin{align*}
 & \ A^*(A\ua(f) -f) )+ \alpha\pa = 0\\
 \Leftrightarrow & \ A^*(A(\underbrace{\ua(f)+tv}_{\tilde u}) - (f+tAv)) + \alpha\pa = 0.
\end{align*}
We then deduce that $\tilde u$ is a minimizer of problem \eqref{eq:aniso_l1} with data $\tilde{f} = f + tAv$.
Hence, there exists a data perturbation such that $v$ 
is the directional derivative of $\ua(f)$.
Putting these arguments together we now know that $ u \in \MG$ if and only if
\begin{align*}
u_i = \begin{cases}
\lambda \in \R, & \uai \neq 0, \\
0, & \uai = 0, |(\pa)_i| < 1, \\
\lambda \geq 0, & \uai = 0, (\pa)_i = 1, \\
\lambda \leq 0, & \uai = 0, (\pa)_i = -1.
\end{cases}
\end{align*}
It is not surprising that $\MG$ has a similar structure as the model manifold for the anisotropic shrinkage in Example \ref{ex:aniso_shrinkage}. 
It allows for arbitrary changes on the support of $\ua(f)$ and permits only zero values if $\uai = 0$ and $|(\pa)_i| < 1$. 
If we exclude the case where $|(\pa)_i| = 1$ even though $\uai$ vanishes, 
debiasing on $\MG$ effectively means solving a least-squares problem with a support constraint on the solution. 
But we again find an odd case where changes are allowed outside of the support of the initial solution $\ua(f)$. 
It occurs when $|(\pa)_i| = 1$ even though $\uai$ vanishes, which seems to be the indefinite case. 
However, it has been argued in \cite{moeller14} that a subgradient equal to $\pm 1$ is a good indicator of support, hence it is reasonable to trust the subgradient in that case.
\end{example}
\subsubsection{Variational model manifolds}

As we have shown so far, the appropriate use of a derivative can yield suitable spaces for the debiasing. 
However, for already supposedly easy problems such as the latter example the explicit computation of such spaces or of the derivatives can be difficult or impossible. 
And even if it is possible, there remains the question of how to effectively solve the debiasing on those spaces, both theoretically and numerically.

On the other hand, the latter example implies that a subgradient of the first solution rather than the solution itself can provide the necessary information for the debiasing. 
This naturally leads us to the idea of Bregman distances in order to use the subgradient in a variational debiasing method. 
And indeed we show that the associated manifolds are closely related, and that they link the concept of model manifolds to the already presented debiasing method from Section \ref{sec:Debiasing}. 
Furthermore, this does not only provide a theoretical framework, but also numerical solutions to perform debiasing in practice, even for more challenging problems.

In the following we introduce related manifolds motivated by the variational problem itself. 
The optimality condition of the variational problem \eqref{eq:firstStep} defines a unique map 
$f \mapsto \pa \in \partial J(\ua)$, which allows us to consider the following manifolds.
We drop the dependence of $\ua$ on $f$ for the sake of readability.

\begin{definition}
For $\pa \in \partial J(\ua)$ defined by \eqref{eq:subgradient} we define
\begin{align*}
\MB &= \big\{ u \in \X ~|~ D_J^{\pa} (u, \ua) = 0 \big\}, \\
\MIC &= \big\{ u \in \X ~|~ \ICB(u, \ua) = 0 \big\}.
\end{align*}
\end{definition}

In order to assess the idea of the above manifolds, we first revisit the anisotropic shrinkage problem of Example \ref{ex:aniso_shrinkage}. 

\begin{example}
 \label{ex:var_aniso_shrinkage}
 {\it Anisotropic shrinkage.}
The optimality condition of problem (\ref{eq:anisotropic_l1}) yields the subgradient 
\begin{align}
(\pa)_i = \dfrac{f_i-(\ua)_i}{\alpha} = \begin{cases}
\sign(f_i), & |f_i| \geq \alpha, \\
\frac{f_i}{\alpha}, & |f_i| < \alpha,
\end{cases}
\label{eq:subgrad_l1}
\end{align}
and for $J = \| \cdot \|_{\lone}$ the Bregman distance takes the following form:
\begin{align*}
 D_{\lone}^{\pa}(u,\ua) 
 &= \| u \|_{\lone} - \langle \pa,u \rangle \\
 &= \sum_{i \in \N} |u_i| - (\pa)_i u_i \\
 &= \sum_{i \in \N} (\sign(u_i) - (\pa)_i ) u_i. 
\end{align*}
A zero Bregman distance thus means that either $u_i = 0$ or $\sign(u_i) = (\pa)_i$. 
Having a closer look at the subgradient \eqref{eq:subgrad_l1}, we observe that if $|f_i| < \alpha$, then $|(\pa)_i| < 1$.
Hence the latter condition cannot be fulfilled, so in this case $u_i$ has to be zero.
We can thus characterize the model manifold related to a zero Bregman distance as:
\begin{align*}
u \in \MB \Leftrightarrow u_i = \begin{cases}
\lambda ~ \sign(f_i), \lambda \geq 0, & |f_i| \geq \alpha, \\
0, & |f_i| < \alpha.
\end{cases}
\end{align*}
As for $\MG$, the model manifold $\MB$ fixes the maximum support to where $|f_i| \geq \alpha$. 
However, $\MB$ only allows for values on the support sharing the same sign as $f_i$ (respectively $(\ua)_i$).

By adapting the proof of \cite{moeller14}, we obtain a similar result for the infimal convolution of Bregman distances, without the restriction on the sign: 
\begin{align*}
 \ICBl(u,\ua) 
 &= [D_{\lone}^{\pa}(\cdot,\ua) \Box D_{\lone}^{-\pa}(\cdot,-\ua)](u) \\
 &= \sum_{i \in \N} (1 - |(\pa)_i|)|u_i|.
\end{align*}
For this infimal convolution to be zero we need either $u_i=0$ or $|(\pa)_i|=1$.
By the structure of the subgradient $\pa$ we thus find
\begin{align*}
u \in \MIC \Leftrightarrow u_i = \begin{cases}
\lambda \in \R, & |f_i| \geq \alpha, \\
0, & |f_i| < \alpha.
\end{cases}
\end{align*}
Hence we observe the following connection between the manifolds: 
\begin{align*}
\MB \subset \MG \subset \MIC.
\end{align*} 

Note that the manifold $\MG$ related to the directional derivative seems to be the odd one of the three. 
While allowing for arbitrary sign for $|f| > \alpha$, 
it only allows for changes in the direction of $f$ directly on the threshold. 
In that sense, $\MB$ and $\MIC$ seem to be more suitable in order to either include or exclude the sign-constraint. 
A closer inspection at the manifolds reveals that $\MIC$ is a linear space, as we further elaborate on in the next subsection. 
In this case it is actually even the span of $\MB$, which is however not true in general. 
This can e.g. be seen from the next example of isotropic TV-type regularization.
\end{example}

\begin{example}
 \label{ex:iso_TV}
 {\it Isotropic TV-type regularization.}
 Let $A \colon \ltwo(\R^n) \to \ltwo(\R^d)$ and $\Gamma \colon \ltwo(\R^n) \to \lone(\R^m)$ be linear and bounded operators and $J(u) = \|\Gamma u \|_{\lone(\R^m)}$ for $d,m,n \in \N$.
 We aim to find the variational model manifolds for the debiasing of the solution 
 \begin{align*}
  \ua \in \argmin_{u \in \ltwo(\R^n)} \frac{1}{2} \| Au - f \|_{\ltwo(\R^d)} + \alpha \|\Gamma u \|_{\lone(\R^m)}.
 \end{align*}
Given the (unique) subgradient $\pa \in \partial J(\ua)$ from the optimality condition, the chain rule for subdifferentials \cite[p. 27]{ekelandtemam} implies the existence
of a $\qa \in \partial \| \cdot \|_{\lone(\R^m)} (\Gamma \ua)$ such that $\pa = \Gamma^* \qa$ and 
\begin{align*}
 D_J^{\pa}(u,\ua) = D_{\lone(\R^m)}^{\qa}(\Gamma u, \Gamma \ua).
\end{align*} 
If we denote the angle between $(\Gamma u)_i$ and $(\qa)_i$ by $\varphi_i$, the Bregman distance reads:
\begin{align*}
 D_J^{\pa}(u,\ua) 
 &= D_{\lone(\R^m)}^{\qa}(\Gamma u, \Gamma \ua) \\
 &= \sum_{i \in \N} |(\Gamma u)_i| - (\qa)_i \cdot (\Gamma u)_i \\
 &= \sum_{i \in \N} |(\Gamma u)_i| \big(1 - \cos(\varphi_i) |(\qa)_i| \big) 
\end{align*}
For a zero Bregman distance we can distinguish two cases: If $|(\qa)_i| < 1$, then $(\Gamma u)_i$ has to be zero.
If $|(\qa)_i| =1$, then either $(\Gamma u)_i = 0$ or $\cos(\varphi_i) = 1$, hence $(\Gamma u)_i = \lambda (\qa)_i$ for $\lambda \geq 0$.
Hence the model manifold $\MB$ is given by 
\begin{align*}
 u \in \MB &\Leftrightarrow \\
 (\Gamma u)_i &= \begin{cases}
					  \lambda (\qa)_i, \lambda \geq 0, & |(\qa)_i| = 1, \\
					  0, & |(\qa)_i| < 1.
					  \end{cases}
\end{align*}
In particular, if $(\Gamma \ua)_i \neq 0$, then by the structure of the $\lone(\R^m)$-subdifferential 
we know that $(\qa)_i = \frac{(\Gamma \ua)_i}{|(\Gamma \ua)_i|}$ and thus $(\Gamma u)_i = \mu (\Gamma \ua)_i$ for some $\mu \geq 0$. 
So provided that $|(\qa)_i| < 1$ whenever $(\Gamma \ua)_i = 0$ we find 
\begin{align*}
 u \in \MB &\Leftrightarrow \\
 (\Gamma u)_i &= \begin{cases}
					  \mu (\Gamma \ua)_i, \mu \geq 0, & (\Gamma \ua)_i \neq 0, \\
					  0, & (\Gamma \ua)_i = 0.
					  \end{cases}
\end{align*}
Performing the debiasing on the latter manifold hence means minimizing the data term with a support and direction constraint on the gradient of the solution. 
This in particular allows to restore the loss of contrast which we have observed for TV regularization in Section \ref{sec:motivation}. 
Note that the condition $|(\qa)_i| < 1 \Leftrightarrow (\Gamma \ua)_i = 0$ excludes the odd case where the subgradient seems to contain more information than the first solution, 
as already seen in Example \ref{ex:aniso_l1}.

In the above illustration of the model manifold, the debiasing seems to rely on the choice of $\qa$, which is obviously {\it not} unique. 
However, in practice we still use the unique subgradient $\pa$ from the optimality condition which avoids the issue of the choice of a ``good'' $\qa$. 

The computation of $\MIC$ is a little more difficult in this case, since we cannot access an explicit representation of the functional $\ICB(\cdot, \ua)$. 
However, since 
\begin{align*}
 \ICBlisoq(\Gamma u, \Gamma \ua) \leq \ICB(u,\ua) 
\end{align*}
(cf. Appendix \ref{app:infconv}, Thm. \ref{thm:smaller}), we can instead use the infimal convolution of two $\lone(\R^m)$-Bregman distances to illustrate the model manifold.
We have (cf. Appendix \ref{app:infconv}, Thm. \ref{thm:infconv_l1})
\begin{align*}
 \ICBlisoq(\Gamma u, \Gamma \ua) = \sum_{i \in \N} G((\Gamma u)_i, (\qa)_i)
\end{align*}
with $G \colon \R^m \times \R^m \to \R$ defined as
\begin{align*}
 &G((\Gamma u)_i, (\qa)_i) =\\
 & \begin{cases}
                             |(\Gamma u)_i| (1 - |\cos(\varphi_i)| |(\qa)_i|), \\
                             \hspace{8.5em} \text{ if } |(\qa)_i| < |\cos(\varphi_i)|, \\
                             |(\Gamma u)_i| | \sin(\varphi_i)| \sqrt{1 - |(\qa)_i|^2}, \\
                             \hfill \text{ if } |(\qa)_i| \geq |\cos(\varphi_i)|.
                            \end{cases}
\end{align*}
For $G$ to be zero we once again distinguish two situations. 
If $|(\qa)_i| < 1$, in the first case $G$ can only vanish if $(\Gamma u)_i = 0$. 
In the second case, since $1 > |(\qa)_i| \geq |\cos(\varphi_i)|$, we infer $\varphi_i \notin \{0,\pi\}$, and hence neither the sinus nor the square root can vanish. 
This means once again that $(\Gamma u)_i = 0$. 
If $|(\qa)_i| = 1$, we can only be in the second case and $G$ vanishes independently of $(\Gamma u)_i$.
Thus $(\Gamma u)_i$ can be arbitrary. 
Putting the arguments together, we find 
\begin{align*}
 u \in \MIC &\Rightarrow \ICBlisoq(\Gamma u, \Gamma \ua) = 0 \\
 &\Leftrightarrow (\Gamma u )_i = \begin{cases}
                                             \lambda \in \R^m, & |(\qa)_i| = 1, \\
                                             0, &  |(\qa)_i| < 1.
                                            \end{cases}
\end{align*}
This is indeed not the span of $\MB$, but it instead allows for arbitrary elements if $|(\qa)_i| = 1$. 
From this example, we cannot immediately state that $\MB \subset \MIC$, because so far we only know that $\MB$ as well as $\MIC$ are subsets of the set $\lbrace u \in \X ~|~ \ICBlisoq(\Gamma u, \Gamma \ua) = 0\rbrace$.
However, in the next subsection we see that $\MB \subset \MIC$ is indeed true and it is actually a general property of the variational model manifolds.

Note that we gain the same support condition on the gradient as for $\MB$, but allow for arbitrary gradient directions on the support, which intuitively does not seem restrictive enough. 
However, in practice for the debiasing the direction is not arbitrary, but the data term decides, so we can expect a similar result for debiasing in $\MB$ and $\MIC$. 
Indeed the numerical studies in Section \ref{sec:exp} confirm these expectations. 
\end{example}

\subsection{Properties of variational model manifolds}

In the following we discuss some properties of the variational manifolds $\MB$ and $\MIC$.  
All results are general and do not depend on the particular choice of a subgradient, so we drop the dependence on $f$ in the notation of the manifolds. 
Let $v \in \X$ and $p \in \partial J(v)$. 
We start with a result on the structure of $\mb$:
\begin{theorem}
The set 
\begin{equation*}
\mb = \{u\in\X~|~D_J^p(u,v) = 0 \}
\end{equation*}
is a nonempty convex cone. 
\end{theorem}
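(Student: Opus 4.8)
The plan is to verify three things in sequence: that $\mb$ is nonempty, that it is convex, and that it is a cone (closed under nonnegative scaling). Throughout I will use the one-homogeneity of $J$, which by the formula stated in the introduction lets me write the Bregman distance as $D_J^p(u,v) = J(u) - \langle p, u\rangle$, independent of $v$ except through the subgradient $p \in \partial J(v)$.

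\textbf{Nonemptiness.} First I would observe that $0 \in \mb$, since $J(0) = 0$ by one-homogeneity (taking $\lambda = 0$ in $J(\lambda u) = |\lambda| J(u)$) and $\langle p, 0\rangle = 0$, so $D_J^p(0,v) = 0$. Also $v$ itself lies in $\mb$ because $D_J^p(v,v) = 0$ always holds for $p \in \partial J(v)$, so the set is certainly nonempty.

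\textbf{Cone property.} For any $u \in \mb$ and any scalar $t \geq 0$, I would compute, using one-homogeneity,
\begin{align*}
D_J^p(tu,v) = J(tu) - \langle p, tu\rangle = t\big(J(u) - \langle p, u\rangle\big) = t \, D_J^p(u,v) = 0,
\end{align*}
so $tu \in \mb$. This shows $\mb$ is closed under nonnegative scaling, hence a cone.

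\textbf{Convexity.} The key structural fact is that $u \mapsto D_J^p(u,v) = J(u) - \langle p, u\rangle$ is a convex functional (the sum of the convex $J$ and a linear term) which is moreover \emph{nonnegative} everywhere, since $p \in \partial J(v)$ and the Bregman distance of a convex functional is always nonnegative. Therefore $\mb$ is exactly the zero sublevel set $\{u : D_J^p(u,v) \leq 0\}$ of this convex functional, and sublevel sets of convex functionals are convex. Concretely, given $u_1, u_2 \in \mb$ and $s \in [0,1]$, convexity gives
\begin{align*}
0 \leq D_J^p(su_1 + (1-s)u_2, v) \leq s\, D_J^p(u_1,v) + (1-s)\, D_J^p(u_2,v) = 0,
\end{align*}
forcing equality, so $su_1 + (1-s)u_2 \in \mb$.

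I do not anticipate a serious obstacle here; the statement follows almost formally from one-homogeneity and nonnegativity of the Bregman distance. The only point demanding a little care is to make sure the two defining properties interact correctly, namely that the linear term $\langle p, \cdot\rangle$ is what converts one-homogeneity of $J$ into the scaling identity $D_J^p(tu,v) = t\,D_J^p(u,v)$ for $t \geq 0$; for $t < 0$ this would fail (the $|\lambda|$ in the one-homogeneity would break the linearity), which is precisely why $\mb$ is a cone rather than a full subspace, consistent with the earlier examples where $\mb$ enforced a sign constraint.
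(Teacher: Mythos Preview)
Your proof is correct and follows essentially the same approach as the paper: nonemptiness via $v\in\mb$, the cone property via the positive homogeneity identity $D_J^p(tu,v)=t\,D_J^p(u,v)$ for $t\ge 0$, and convexity via the observation that $\mb$ coincides with the zero sublevel set of the convex nonnegative map $u\mapsto D_J^p(u,v)$. Your additional remarks (that $0\in\mb$, the explicit convexity chain, and the comment on why $t<0$ fails) are correct elaborations but not needed for the argument.
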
 
\begin{proof}
The map $u \mapsto D_J^p(u,v)$ is convex and nonnegative, hence 
\begin{equation*}
\{u~|~D_J^p(u,v) = 0 \} = \{u~|~D_J^p(u,v) \leq 0 \} 
\end{equation*}
is convex as a sublevel set of a convex functional. Moreover, for each $c \geq 0$ we have
\begin{equation*}
D_J^p(c u,v)=c~D_J^p(u,v), 
\end{equation*}
i.e. if $u$ is an element of the set, then every positive multiple $c u$ is an element, too. Hence it is a convex cone. Since $D_J^p(v,v)=0$ it is not empty.\QED
\end{proof} 

The structure of $\mic$ is even simpler; as announced in a special example above it is indeed a linear space:
\begin{theorem}\label{subspacelemma}
The set 
\begin{equation*}
\mic = \{u\in\X|[D_J^p(\cdot,v)\Box D_J^{-p}(\cdot,-v)](u) = 0 \}
\end{equation*}
is a nonempty linear subspace of $\X$.
\end{theorem}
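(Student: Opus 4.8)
The plan is to show that the functional $N(u) := [D_J^p(\cdot,v)\Box D_J^{-p}(\cdot,-v)](u)$ is an (extended-real-valued) seminorm on $\X$, after which the statement follows from the elementary fact that the kernel of a seminorm is a linear subspace. First I would record the simplified form of the two Bregman distances: since $J$ is absolutely one-homogeneous and $p \in \partial J(v)$, one has $J(v) = \langle p, v\rangle$ and $-p \in \partial J(-v)$, so that $D_J^p(u,v) = J(u) - \langle p,u\rangle$ and $D_J^{-p}(u,-v) = J(u) + \langle p,u\rangle$. In particular both summands are nonnegative, and the trivial splitting $0 = 0 + 0$ gives $N(0) \le D_J^p(0,v) + D_J^{-p}(0,-v) = 0$; hence $N \ge 0$ everywhere with $N(0) = 0$, so $N$ is proper, nonnegative, and never attains $-\infty$.

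Next I would verify the defining properties of a seminorm. Convexity of $N$ is immediate, being the infimal convolution of the two convex functionals $D_J^p(\cdot,v)$ and $D_J^{-p}(\cdot,-v)$. For positive homogeneity I observe that each summand is positively one-homogeneous in its first argument (the one-homogeneous term $J$ scales and the linear correction scales); substituting $z \mapsto \lambda z$ in the infimum defining $N(\lambda u)$ for $\lambda > 0$ then yields $N(\lambda u) = \lambda N(u)$. Combining convexity with positive homogeneity gives subadditivity in the usual way, $N(u+w) = 2\,N(\tfrac{u+w}{2}) \le N(u) + N(w)$.

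The crux, and the step I expect to require the most care, is the evenness $N(-u) = N(u)$, which upgrades positive to absolute homogeneity. Here I would exploit the absolute one-homogeneity of $J$ through the pointwise identities $D_J^p(-x,v) = J(x) + \langle p,x\rangle = D_J^{-p}(x,-v)$ and, symmetrically, $D_J^{-p}(-x,-v) = D_J^p(x,v)$. Substituting $z \mapsto -z$ in the infimum defining $N(-u)$ and applying these two identities transforms it into $\inf_z D_J^{-p}(u-z,-v) + D_J^p(z,v)$, which equals $N(u)$ by the commutativity of the infimal convolution. Thus $N(-u) = N(u)$, and together with positive homogeneity this gives $N(\lambda u) = |\lambda|\,N(u)$ for all $\lambda \in \R$.

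Finally I would conclude. With $N$ nonnegative, absolutely homogeneous, and subadditive, its zero set $\mic = \{u \in \X \mid N(u) = 0\}$ is a linear subspace: if $N(u) = N(w) = 0$ then $0 \le N(u+w) \le N(u) + N(w) = 0$, and $N(\lambda u) = |\lambda|\,N(u) = 0$ for every scalar $\lambda$. Nonemptiness is clear since $N(0) = 0$ (equivalently $v \in \mic$, as $D_J^p(v,v) = 0$ dominates the infimal convolution). The one routine point to keep honest throughout is that all manipulations of the infimum are legitimate because $N$ is bounded below by $0$, so no degenerate infimal convolution with value $-\infty$ can occur.
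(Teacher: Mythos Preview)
Your proof is correct, and the overall strategy matches the paper's: both establish that the infimal convolution functional is nonnegative, absolutely one-homogeneous, and subadditive, whence its zero set is a linear subspace. The main substantive difference is in how subadditivity is obtained. The paper argues directly with approximating sequences, choosing $z_1^n$ and $z_2^n$ that nearly realize the infima for $u_1$ and $u_2$ and then applying the triangle inequality for the seminorm $J$ to bound $\mathrm{ICB}_J^p(u_1+u_2,v)$. Your route is slicker: you first note convexity of the infimal convolution, then combine it with positive homogeneity via $N(u+w) = 2N\bigl(\tfrac{u+w}{2}\bigr) \le N(u)+N(w)$, bypassing any $\varepsilon$-sequence argument entirely. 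For the evenness $N(-u)=N(u)$ the two arguments are essentially the same substitution packaged differently: the paper handles negative scalars by the change of variables $z = c(u-w)$, while you use the pointwise identity $D_J^p(-x,v) = D_J^{-p}(x,-v)$ together with commutativity of the infimal convolution. Both arrive at the same conclusion; your packaging as ``$N$ is a seminorm, hence its kernel is a subspace'' is a clean way to summarize what the paper does piecewise.
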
 
\begin{proof}
By analogous arguments as above we deduce the convexity and since 
\begin{align*}
\mathrm{ICB}_J^p(0,v) &= \inf_{\phi + \psi = 0} D_J^p(\phi,v) + D_J^{-p}(\psi,-v)\\ &\leq D_J^p(v,v) + D_J^{-p}(-v,-v) = 0
\end{align*} 
the set is not empty. For arbitrary $c \in \R \setminus\{0\}$ we have
\begin{alignat*}{3}
&\mathrm{ICB}_J^p(cu,v)\\  &= \inf_z  J(cu-z) + J(z)  - \langle p, cu - 2z \rangle \\
&= \abs{c}\inf_w  J(u-w) + J(w) - \langle p, u -2w \rangle, 
\end{alignat*} 
where we use the one-to-one transform $z=cw$ for $c > 0$ and $z=c(u-w)$ for $c < 0$.
This  implies that $\mathrm{ICB}_J^p(cu,v)=0$ if $\mathrm{ICB}_J^p(u,v) = 0$. 
Now let $u_1,u_2 \in \mic$, i.e. $\mathrm{ICB}_J^p(u_1,v) = 0$ and $\mathrm{ICB}_J^p(u_2,v) = 0$.
Then by definition of the infimum there exist sequences $(z_1^n)_{n\in\N}, (z_2^n)_{n\in\N}$ such that 
\begin{alignat*}{4}
 \lim_{n \to \infty} J(u_1 - z_1^n) &+ J(z_1^n) &&- \langle p, u_1 - 2 z_1^n \rangle &&= 0, \\
 \lim_{n \to \infty} J(u_2 - z_2^n) &+ J(z_2^n) &&- \langle p, u_2 - 2 z_2^n \rangle &&= 0.
\end{alignat*}
Due to its convexity and absolute one-homogeneity $J$ is a seminorm and thus satisfies the triangle inequality:
\begin{align*}
 &\mathrm{ICB}_J^p(u_1 + u_2,v)  \\
 &= \inf_z J(u_1 + u_2 - z) + J(z) \\ 
 &\hspace{3em} - \langle p, u_1 +u_2 - 2z \rangle \\ 
 &\leq J(u_1 + u_2 - z_1^n - z_2^n) + J(z_1^n + z_2^n) \\ 
 &\hspace{3em} - \langle p, u_1 + u_2 - 2z_1^n - 2z_2^n \rangle \\
 &\leq J(u_1 - z_1^n) + J(z_1^n) - \langle p, u_1 - 2 z_1^n \rangle \\ 
 & \hspace{3em} + J(u_2 - z_2^n) + J(z_2^n) - \langle p, u_2 - 2 z_2^n \rangle \\
 & \to 0, \text{ for } n \to \infty.
\end{align*}
Hence $ u_1 + u_2 \in \mic$ and $\mic$ is a linear subspace.
\QED
\end{proof} 


As one may expect from the fact that the infimal convolution is a weaker distance than the original Bregman distance, 
we obtain an immediate inclusion between the corresponding manifolds:
\begin{lemma}
$\mb \subset \mic$.
\end{lemma}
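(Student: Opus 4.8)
The plan is to exploit the two elementary features of the infimal convolution of Bregman distances that make the inclusion almost immediate: $\mathrm{ICB}_J^p(\cdot,v)$ is nonnegative, and it is dominated by each of the two distances being convolved. Since every $u \in \mb$ kills the first Bregman distance $D_J^p(u,v)$, these two facts squeeze $\mathrm{ICB}_J^p(u,v)$ between $0$ and $0$. This is precisely the reasoning already used for singular vectors earlier in the excerpt, where $\mathrm{ICB}_J^{p_\alpha}(cu^\lambda,\ua)\le D_J^{\pa}(cu^\lambda,\ua)$ was invoked; here I would simply record it as a general statement.

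First I would establish the domination $\mathrm{ICB}_J^p(u,v)\le D_J^p(u,v)$. Writing the infimal convolution as $\inf_{z\in\X} D_J^p(u-z,v)+D_J^{-p}(z,-v)$ and inserting the admissible choice $z=0$ yields $\mathrm{ICB}_J^p(u,v)\le D_J^p(u,v)+D_J^{-p}(0,-v)$. It then remains to check that the extra term vanishes. Using the one-homogeneous form of the Bregman distance, $D_J^{-p}(w,-v)=J(w)+\langle p,w\rangle$ — valid because $-p\in\partial J(-v)$ whenever $p\in\partial J(v)$, by the absolute one-homogeneity and evenness of $J$ — I get $D_J^{-p}(0,-v)=J(0)=0$, so the desired upper bound holds.

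Second, I would note that both $D_J^p(\cdot,v)$ and $D_J^{-p}(\cdot,-v)$ are nonnegative, hence their pointwise sum is nonnegative for every splitting $\phi+\psi=u$, and therefore so is the infimum defining the convolution; thus $\mathrm{ICB}_J^p(u,v)\ge 0$ for all $u$. Combining the two steps, if $u\in\mb$ then $D_J^p(u,v)=0$, and the chain $0\le \mathrm{ICB}_J^p(u,v)\le D_J^p(u,v)=0$ forces $\mathrm{ICB}_J^p(u,v)=0$, i.e. $u\in\mic$.

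The argument is entirely elementary, and there is no real obstacle; the only point requiring a moment's care is the verification that $D_J^{-p}(0,-v)=0$, which is exactly where the absolute one-homogeneity of $J$ (guaranteeing both $J(0)=0$ and $-p\in\partial J(-v)$) enters. Everything else is just the definition of the infimal convolution together with the nonnegativity of Bregman distances.
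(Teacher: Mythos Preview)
Your argument is correct and follows the same overall strategy as the paper---pick an admissible decomposition in the infimal convolution to bound $\mathrm{ICB}_J^p(u,v)$ from above by zero, then invoke nonnegativity---but your choice of splitting is cleaner. You take $z=0$ (i.e.\ $\phi=u$, $\psi=0$) and only need to check $D_J^{-p}(0,-v)=J(0)=0$, which follows immediately from absolute one-homogeneity. The paper instead uses the decomposition $\phi=2u$, $\psi=-u$ together with the identity $D_J^{-p}(-cu,-v)=c\,D_J^p(u,v)$ for $c\ge 0$, yielding $\mathrm{ICB}_J^p(u,v)\le D_J^p(2u,v)+D_J^{-p}(-u,-v)=2D_J^p(u,v)+D_J^p(u,v)=0$. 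Both work, but yours is the more economical route and indeed is exactly the inequality $\mathrm{ICB}_J^p(u,v)\le D_J^p(u,v)$ already exploited in the singular-vector discussion; the paper's slightly more elaborate splitting buys nothing additional here.
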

\begin{proof}
Let $u \in \mb$, i.e. $D_J^p(u,v) = 0$. For $c \geq 0$ we have
\begin{equation*}
D_J^{-p}(-c u,-v)=c~D_J^p(u,v). 
\end{equation*}
Thus we deduce
\begin{align*}
\mathrm{ICB}_J^p(u,v)
&= \inf_{\phi + \psi = u} D_J^p(\phi,v) + D_J^{-p}(\psi,-v) \\
&\leq D_J^p(2u,v) + D_J^{-p}(-u,-v) \\
&= 2 D_J^p(u,v) + D_J^p(u,v)
= 0.
\end{align*}
The assertion follows by the nonnegativity of the maps $u \mapsto D_J^p(u,v)$ and $u \mapsto D_J^{-p}(u,-v)$. 
Note that for $p \neq 0$ the subset is proper, since e.g. $-v \in \mic$ but $-v \notin \mb$.\QED
\end{proof}

We finally elaborate on the importance of absolute one-homogeneity of $J$ for our approach 
(respectively also other debiasing approaches as in \cite{deledalle}), such that the subdifferential can be multivalued. 
Otherwise the model manifolds may just contain a single element and debiasing 
in this manifold cannot produce any other solution.
This is e.g. the case for a strictly convex functional.

\begin{lemma}
Let $J$ be strictly convex. Then $\mb$ is a singleton.
\end{lemma}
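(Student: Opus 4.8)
The plan is to show that for a strictly convex $J$ the Bregman distance vanishes \emph{only} at its second argument, so that the constraint $D_J^p(u,v)=0$ pins $u$ down to a single point. Recall first that the subgradient inequality $J(u) \ge J(v) + \langle p, u-v\rangle$ for $p \in \partial J(v)$ immediately gives $D_J^p(u,v) \ge 0$ for every $u$, with $D_J^p(v,v) = 0$. Thus $v \in \mb$, and it remains to prove that no other point can lie in $\mb$, i.e. that this inequality is strict whenever $u \neq v$.

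The key step is to sharpen the subgradient inequality into a strict one using strict convexity. Fix $u \in \dom J$ with $u \neq v$ and consider the segment $v + t(u-v)$ for $t \in (0,1)$. On the one hand, the subgradient inequality applied at this point yields
\[
J(v + t(u-v)) \ge J(v) + t\langle p, u - v\rangle.
\]
On the other hand, strict convexity of $J$ gives
\[
J(v + t(u-v)) < (1-t)J(v) + t J(u).
\]
Chaining the two estimates and cancelling $J(v)$ produces $t\langle p, u-v\rangle < t\bigl(J(u) - J(v)\bigr)$; dividing by $t>0$ and rearranging gives $D_J^p(u,v) = J(u) - J(v) - \langle p, u-v\rangle > 0$. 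Hence $D_J^p(u,v)=0$ forces $u=v$, so $\mb = \{v\}$ is a singleton.

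I do not expect a genuine obstacle here; the argument is elementary once one evaluates the two convexity inequalities along the segment joining $v$ and $u$. The only point requiring a small amount of care is ensuring that $J(u)$ is finite for the candidate points $u$, but this is automatic: any $u$ with $D_J^p(u,v)=0$ necessarily lies in $\dom J$, so the strict convexity inequality applies and the conclusion follows. This lemma also makes transparent why absolute one-homogeneity (and hence a multivalued subdifferential) is essential for the debiasing to be nontrivial, as discussed just above the statement.
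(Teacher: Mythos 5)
Your proof is correct and is essentially the paper's argument in expanded form: the paper simply notes that $u \mapsto D_J^p(u,v)$ is strictly convex, nonnegative and vanishes at $v$, hence has $\{v\}$ as its zero set, and your segment computation is precisely the verification of that claim. The care you take with finiteness of $J(u)$ is fine but not an issue the paper needed to address.
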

\begin{proof}
For strictly convex $J$ the mapping  $u \mapsto D_J^p(u,v)$ is strictly convex as well, hence $D_J^p(u,v) = 0$ 
if and only if $u = v$ and $\mb = \{ v \}$.
\QED
\end{proof}

However, one can easily exclude this case since our assumption of $J$ being one-homogeneous guarantees 
that it is not strictly convex.

\subsection{Bias-variance estimates} 

Another justification for the deterministic definition of bias as well as our choice for the distance measure in Section \ref{biasvariancesection} can be found in the variational model itself. 
In order to derive quantitative bounds for bias and variance in a variational model, we start with the Tikhonov regularization (Ridge regression) model related to the functional 
 $J(u) = \frac{1}2 \Vert u \Vert_{\X}^2$.
 The optimality condition for this problem is given by
 \begin{equation*} 
 	A^* (A \ua(f) -f) + \alpha \ua(f) = 0.
 \end{equation*}
 We easily see that there exists $w_\alpha = \frac{1}\alpha (f- A\ua(f))$ such that $\ua(f) = A^* w_\alpha$ and
 \begin{align*}
  A\ua(f) - Au^* + \alpha w_\alpha =  f- Au^*.
 \end{align*}
 Now let us assume that a source condition $u^* \in \mathrm{Im}[A^*]$ holds, i.e. $u^* = A^* w^*$ for some $w^*$. 
 In this case we can subtract $\alpha w^*$ on both sides and take a squared norm to arrive at 
 \begin{align*}
 &\Vert A\ua(f) - Au^* \Vert_{\Y}^2  + \alpha^2 \Vert w_\alpha - w^* \Vert_{\Y}^2\\ 
 & \hspace{5.33em} + 2 \alpha \langle  A\ua(f) - Au^* , w_\alpha - w^*  \rangle \\= \ 
 &\Vert f- Au^* \Vert_{\Y}^2 + \alpha^2 \Vert w^* \Vert_{\Y}^2 \hspace{0.09em} - \hspace{0.09em} 2\alpha \langle f - Au^*, w^* \rangle. 
 \end{align*}
 Now taking the expectation on both sides and using $\E[f] = f^* =Au^*$ we find 
 \begin{align}
 \label{eq:BiasVarianceEstimate}
 &\E[  \Vert A\ua(f) - Au^* \Vert_{\Y}^2 ] + \alpha^2 \E [\Vert w_\alpha - w^* \Vert_{\Y}^2 ] \nonumber \\
 &\hspace{8em} + 2 \alpha \E[ \Vert \ua(f) - u^* \Vert_{\X}^2] \nonumber \\ = \
 & \E[\Vert f- Au^* \Vert_{\Y}^2 ] + \alpha^2 \Vert w^* \Vert_{\Y}^2. 
 \end{align} 
 The left-hand side is the sum of three error terms for the solution measured in different norms: in the output space, 
 the space of the source element, and the original space used for regularization. 
 All of them can be decomposed in a bias and a variance term, e.g.
 \begin{align*}
 &\E[ \Vert \ua(f) - u^* \Vert_{\X}^2]\\ = &\Vert \E[\ua(f)]- u^* \Vert_{\X}^2  +  \E[ \Vert \ua(f) - \E[\ua(f)] \Vert_{\X}^2] .
 \end{align*}  
 The term $\E[\Vert f- Au^* \Vert_{\Y}^2 ]$ in \eqref{eq:BiasVarianceEstimate} is exactly the variance in the data. 
 As a consequence $\alpha \Vert w^* \Vert_{\Y}$ 
 measures the bias in this case. 
 Note that in particular for zero variance we obtain a direct estimate of the bias via $\alpha \Vert w^* \Vert_{\Y}$.
 
 \begin{figure*}[ht!]
  \centering
  \begin{tabular}{m{0.018\textwidth}>{\centering\arraybackslash}m{0.2\textwidth}>{\centering\arraybackslash}m{0.2\textwidth}>{\centering\arraybackslash}m{0.2\textwidth}>{\centering\arraybackslash}m{0.2\textwidth}}
& TV denoising & Bregman debiasing & TV denoising  & Bregman debiasing\\
& $u_{\alpha}(f)$ & $\hat{u}(f)$ & $u_{\alpha}(f)$ & $\hat{u}(f)$\\
\rotatebox{90}{Noisy data $f$} &
\includegraphics[width=0.2\textwidth]{Giraffe_TVdenoised_lambda0-3}&
\includegraphics[width=0.2\textwidth]{Giraffe_Bregmandebiased_lambda0-3}&
\includegraphics[width=0.2\textwidth]{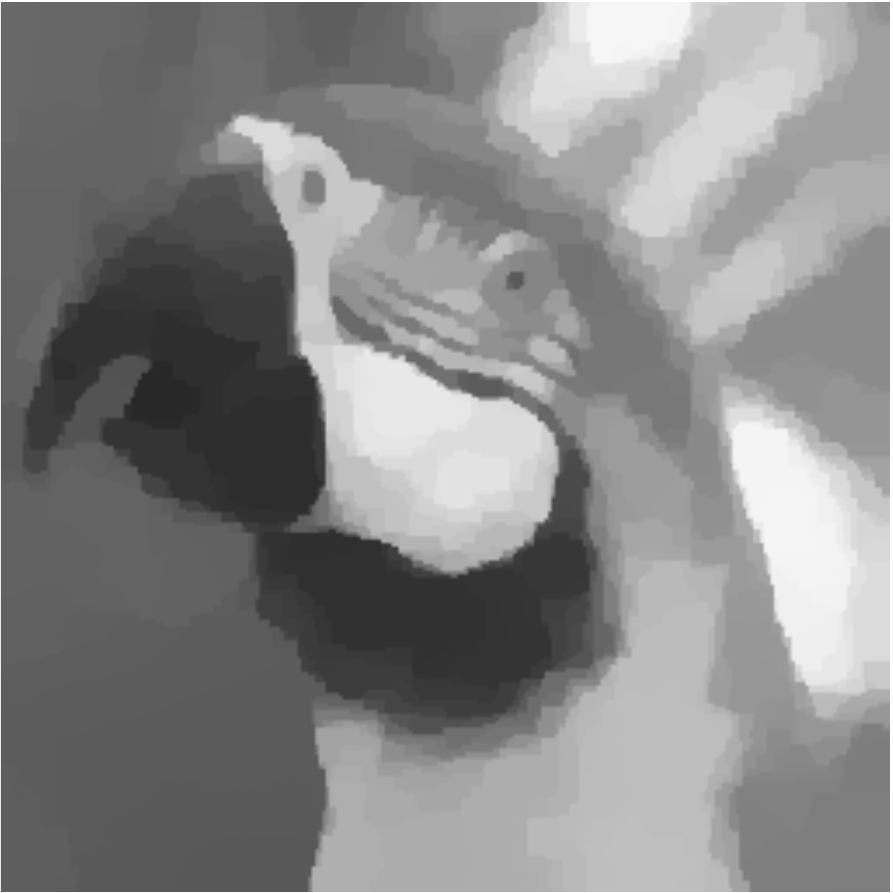}&
\includegraphics[width=0.2\textwidth]{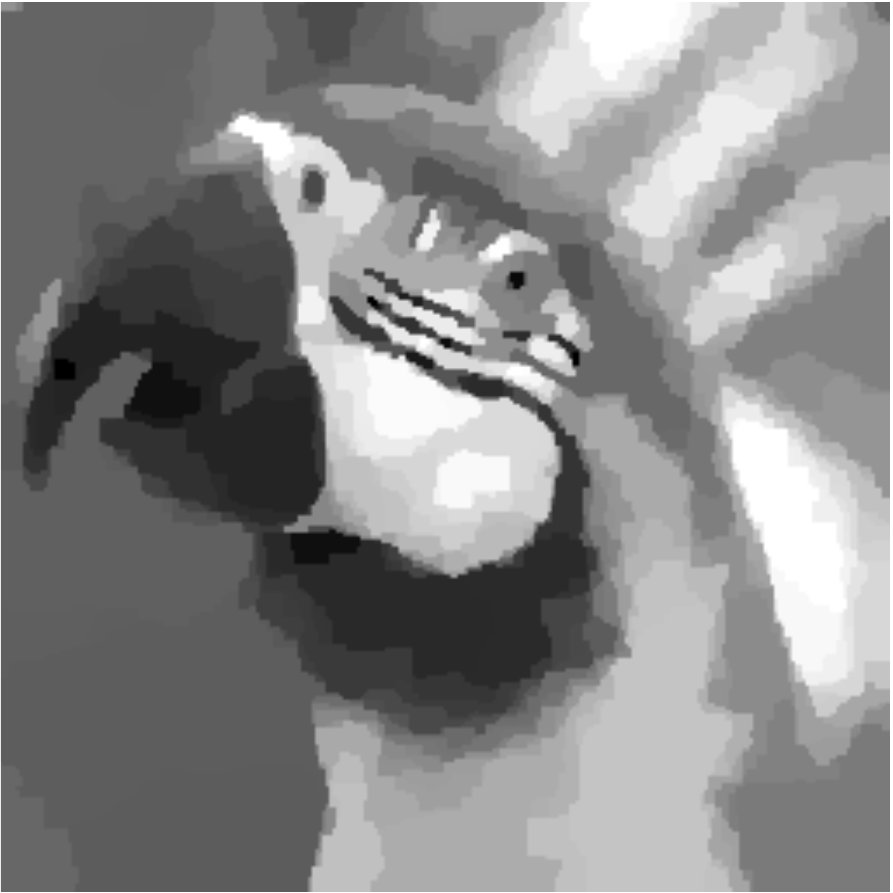}\\
& TV denoising  & Bregman debiasing & TV denoising & Bregman debiasing\\
&  $u_{\alpha}(f^*)$ & $\hat{u}(f^*)$&  $u_{\alpha}(f^*)$ &  $\hat{u}(f^*)$\\
\rotatebox{90}{Clean data $f^*$} &
\includegraphics[width=0.2\textwidth]{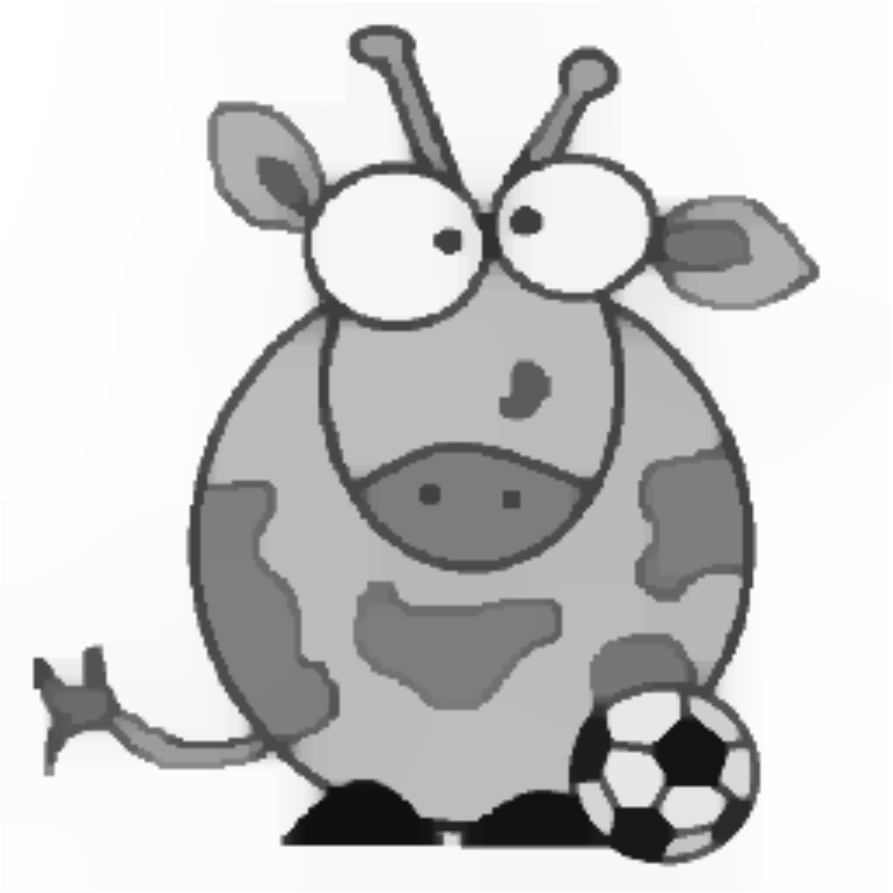}&
\includegraphics[width=0.2\textwidth]{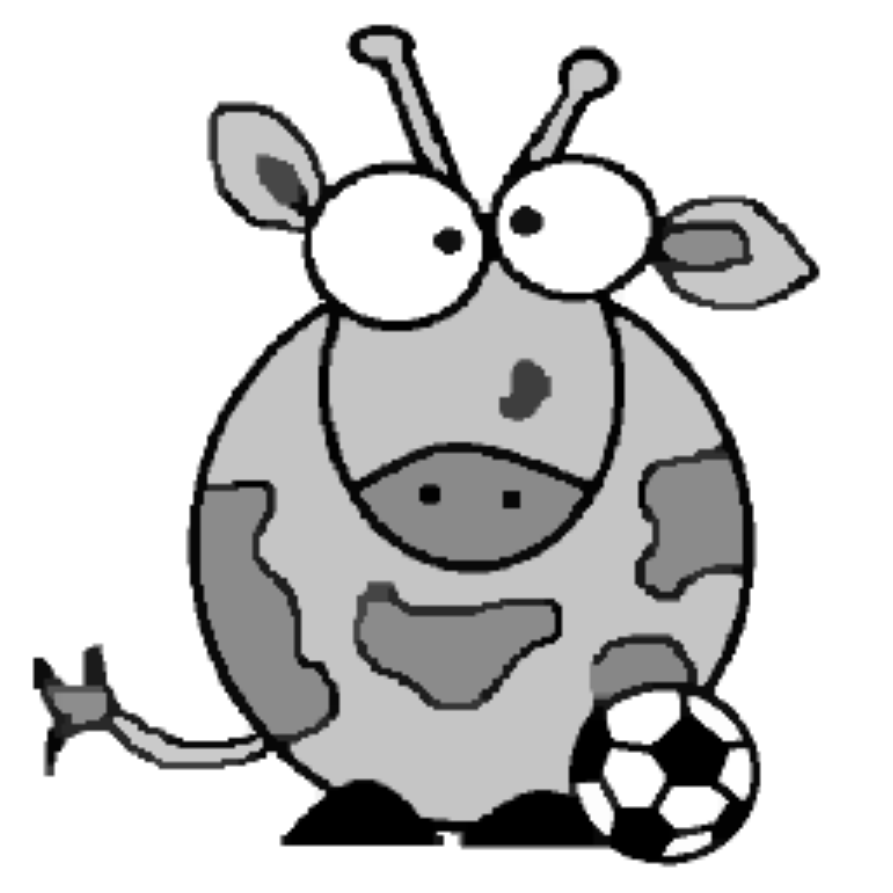}&
\includegraphics[width=0.2\textwidth]{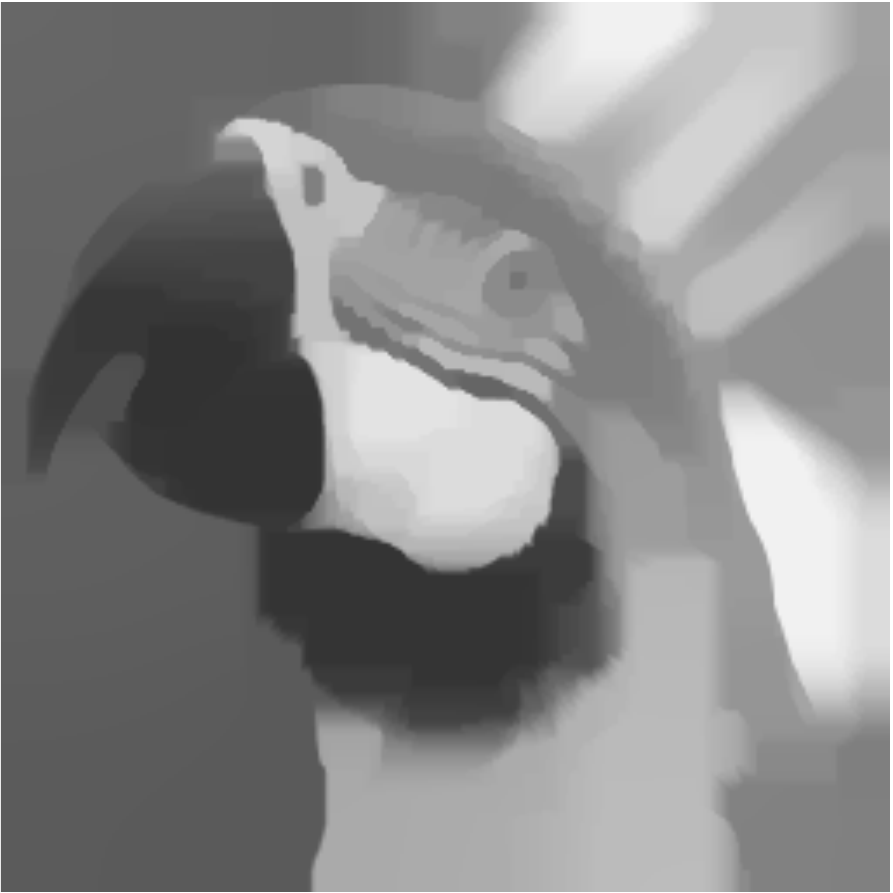}&
\includegraphics[width=0.2\textwidth]{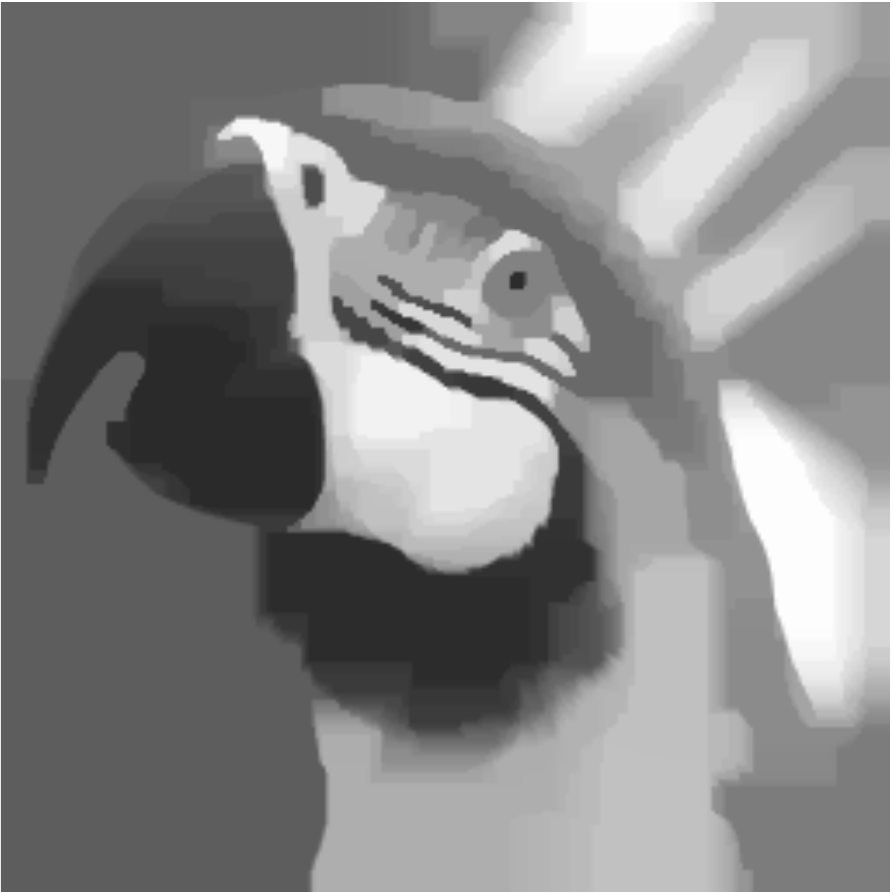}\\
  \end{tabular}
  \caption{TV denoising and debiasing of the Giraffe and the Parrot images for either noisy data $f$ or clean data $f^*$,
  with the same regularization parameter $\alpha=0.3$.\label{fig:Mf_comp}}
\end{figure*}

 In the case of the variational model \eqref{eq:firstStep}
 this can be generalized using recent approaches \cite{bregmanbuchkapitel,BuOs04,bur07,resmerita}  
 using the source condition  $A^* w^* \in \partial J(u^*)$. 
 Now completely analogous computations as above yield 
 \begin{align*}
 &\E[  \Vert A\ua(f) - Au^* \Vert_{\Y}^2 ] + \alpha^2 \E [\Vert w_\alpha - w^* \Vert_{\Y}^2 ] \\
 &\hspace{7.35em} + 2 \alpha \E[ D^{\mathrm{sym}}_J(\ua(f),u^*) ] \\
 = \ &\E[\Vert f- Au^* \Vert_{\Y}^2 ] + \alpha^2 \Vert w^* \Vert_{\Y}^2 ,
  \end{align*} 
 with the only difference that we now use the symmetric Bregman distance
  \begin{equation*}
 D^{\mathrm{sym}}_J(\ua(f),u^*) = \langle A^*w_\alpha - A^*w^* , \ua(f) - u^* \rangle,
 \end{equation*}
 with $A^*w_{\alpha} \in \partial J(\ua(f))$.
 The bias-variance decomposition on the right-hand side remains the same. 
 In the noiseless case it is then natural to consider this (here, deterministic) estimate as a measure of bias: 
 \begin{align*}
 &\Vert A\ua(f^*) - Au^* \Vert_{\Y}^2  + \alpha^2 \Vert w_\alpha - w^* \Vert_{\Y}^2  \\
 &\hspace{8.4em} + 2 \alpha D^{\mathrm{sym}}_J(\ua(f^*),u^*)  \\
 &= \ \alpha^2 \Vert w^* \Vert_{\Y}^2 ,
  \end{align*} 
 Here, as already discussed in Section \ref{biasvariancesection}, we again consider a difference between the exact solution $u^*$ 
 and the estimator for $\E[f] = f^*$, i.e. the expectation of the noise, 
 rather than the expectations of the estimators $\ua(f)$ over all realizations of $f$ (which coincide if $J$ is quadratic). 
 We observe that there are three natural distances to quantify the error and thus also the bias: 
 a quadratic one in the output space and a predual space (related to $w$),
 and the symmetric Bregman distance related to the functional $J$. The first term $\Vert A\ua(f^*) - Au^* \Vert_{\Y}^2$ is exactly the one we use as a measure of bias.
The second term $ \alpha^2 \E [\Vert w_\alpha - w^* \Vert_{\Y}^2 ]$ is constant on the model manifold $\mathcal{M}_{f^*}^{\mathrm{B}}$, 
since by definition of the manifold $p_\alpha = A^* w_\alpha$ is a subgradient of all the elements in $\mathcal{M}_{f^*}^{\mathrm{B}}$.
The third term $ D^{\mathrm{sym}}_J(\ua(f^*),u^*)$ is not easy to control; if the manifold is appropriate, meaning that $\pa \in \partial J(u^*)$, 
then the symmetric Bregman distance vanishes for every element in $\mathcal{M}_{f^*}^{\mathrm{B}}$.
In any other case, we do not have access to a subgradient $p^* \in \partial J(u^*)$, so we cannot control the Bregman distance for any element of the manifold.
Hence, with our method we minimize the part of the bias that we can actually control. 
In fact, if the model manifold is right, we even minimize the whole bias.

\subsection{Back to the proposed method}

To sum up, the debiasing method we have introduced in Equations \eqref{eq:secondStepInvConv} and \eqref{eq:secondStepBregman} comes down to debiasing over ${\cal M}_{f^*}^{\mathrm{IC}}$ and ${\cal M}_{f^*}^{\mathrm{B}}$, respectively,
while the results of Section \ref{sec:Debiasing} guarantee the existence of the optimal debiasing $\uhat(f^*)$ at least on ${\cal M}_{f^*}^{\mathrm{B}}$.

However in practice, we do not have access to the clean data $f^*$, but often only to one noisy realization $f$, 
which makes the regularization in \eqref{eq:firstStep} necessary in the first place. 
Instead of the true model manifold $\mathcal{M}_{f^*}$, we hence use an approximation $\mathcal{M}_{f}$ computed from the noisy data $f$ 
to perform the debiasing of the reconstruction $\ua(f)$ for noisy data.
The following experiments show that $\mathcal{M}_{f}$ is a good approximation of $\mathcal{M}_{f^*}$ in terms of the resulting bias and bias reduction. 
They also relate the different definitions of bias that we have considered.
In particular, we distinguish between the statistical bias of Equation \eqref{eq:stat_bias} which is the expectation over several noisy realizations $f$
and the deterministic bias that we define in Equation \eqref{eq:det_bias}, which instead considers the outcome given the noiseless data $f^*$.

Figure \ref{fig:Mf_comp} displays the TV denoising and debiasing (using the Bregman distance model manifold) results obtained with noisy data $f$ (first row)
or clean data $f^*$ (second row) with the same regularization parameter $\alpha=0.3$.
We have performed the experiments for both the cartoon {\it Giraffe} image and the natural {\it Parrot} image\footnote{\url{http://r0k.us/graphics/kodak/}}. 
First, for the Giraffe image we observe that the TV denoised solution $u_{\alpha}(f^*)$ for clean data suffers from a heavy loss of contrast, i.e. from method bias.
The debiased solution $\uhat(f^*)$ however is again close to the original data $f^*$. 
This shows that if the noiseless data is well represented by the choice of regularization (and hence $\mathcal{M}_{f^*}$),
i.e. if there is no or little model bias, the debiasing procedure allows to recover the original signal almost perfectly.
On the other hand, the same experiments on the natural {\it Parrot} image show the problem of model bias since the choice of regularization does not entirely match the data $f^*$. 
The debiasing allows to recover the lost contrast, but even the result for noiseless data still suffers from bias, i.e. the loss of small structures, which is model bias in that case.

Besides, if $\alpha$ is big enough to effectively remove noise during the denoising step, then the TV solutions $u_{\alpha}(f)$ and $u_{\alpha}(f^*)$ are close to each other.
This leads to comparable model manifolds and hence debiased solutions, which confirms that $\mathcal{M}_{f}$ is indeed a good approximation to $\mathcal{M}_{f^*}$.

\begin{figure*}[ht!]
  \centering
  \begin{tabular}{m{0.018\textwidth}>{\centering\arraybackslash}m{0.28\textwidth}>{\centering\arraybackslash}m{0.28\textwidth}>{\centering\arraybackslash}m{0.28\textwidth}}
& TV denoising, & Bregman debiasing, & \\
& $\mathbb{B}^{\text{stat}}(u_{\alpha}(f)) = u^*-\mathbb{E}[u_{\alpha}(f)]$ &  $\mathbb{B}^{\text{stat}}(\uhat(f)) = u^*-\mathbb{E}[\uhat(f)]$ &  \\
\rotatebox{90}{With noisy data $f$} &
\includegraphics[width=0.28\textwidth]{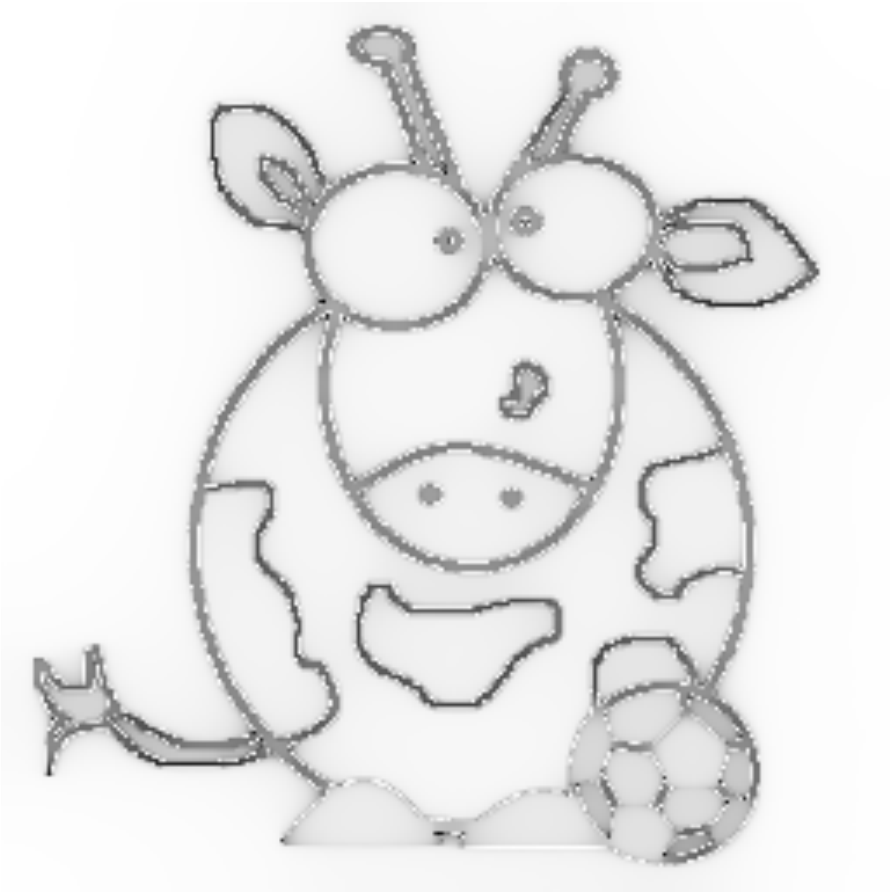}&
\includegraphics[width=0.28\textwidth]{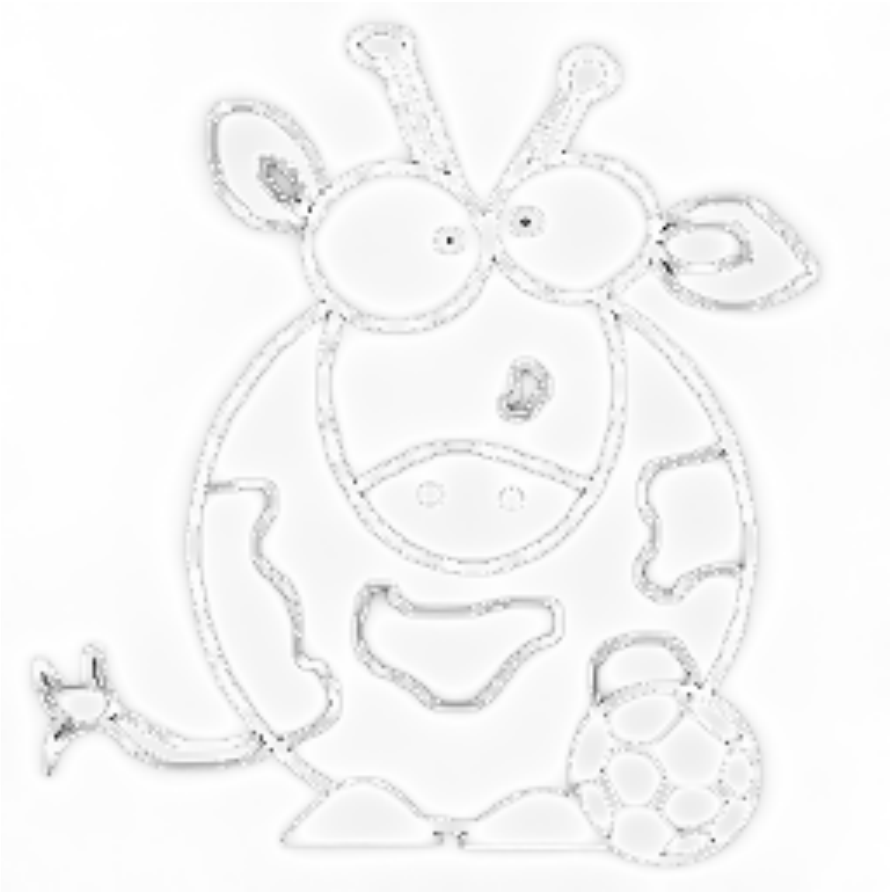}& \vspace{1em}\\
& TV denoising, & Bregman debiasing, &  \\
&  & Model bias:  & Method bias:\\
& $\mathbb{B}^*(u_{\alpha}(f^*)) = u^*-u_{\alpha}(f^*)$ & $\mathbb{B}^*(\uhat(f^*)) = u^*-\uhat(f^*)$ & $\uhat(f^*) - u_{\alpha}(f^*)$\\
\rotatebox{90}{With clean data $f^*$} &
\includegraphics[width=0.28\textwidth]{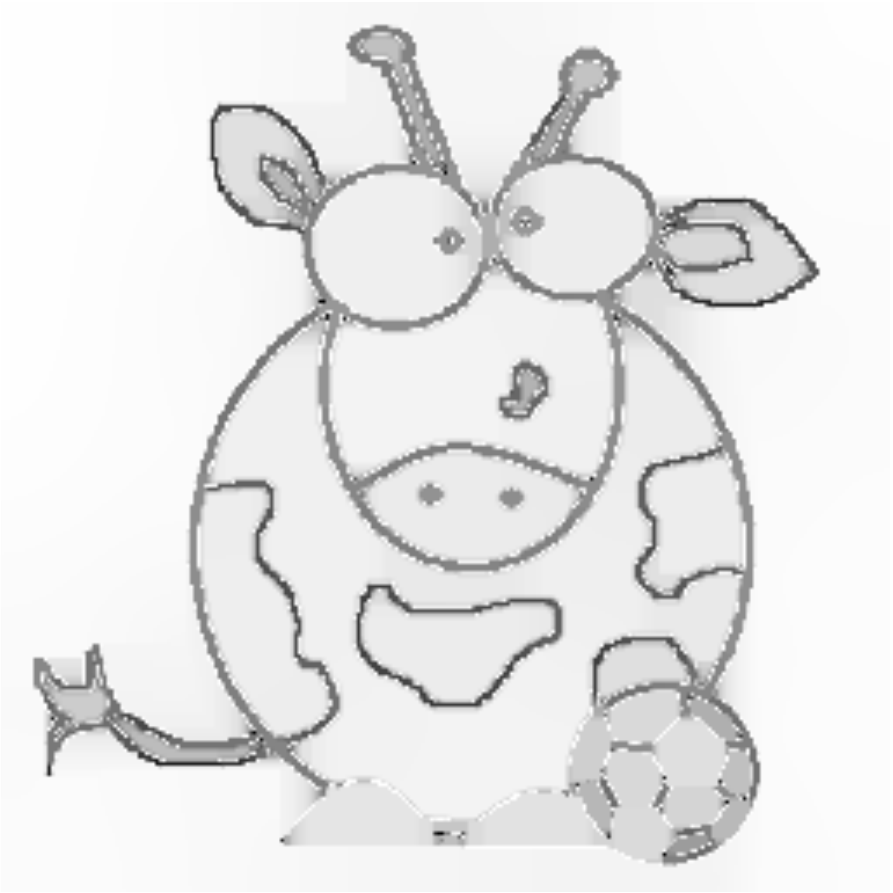}&
\includegraphics[width=0.28\textwidth]{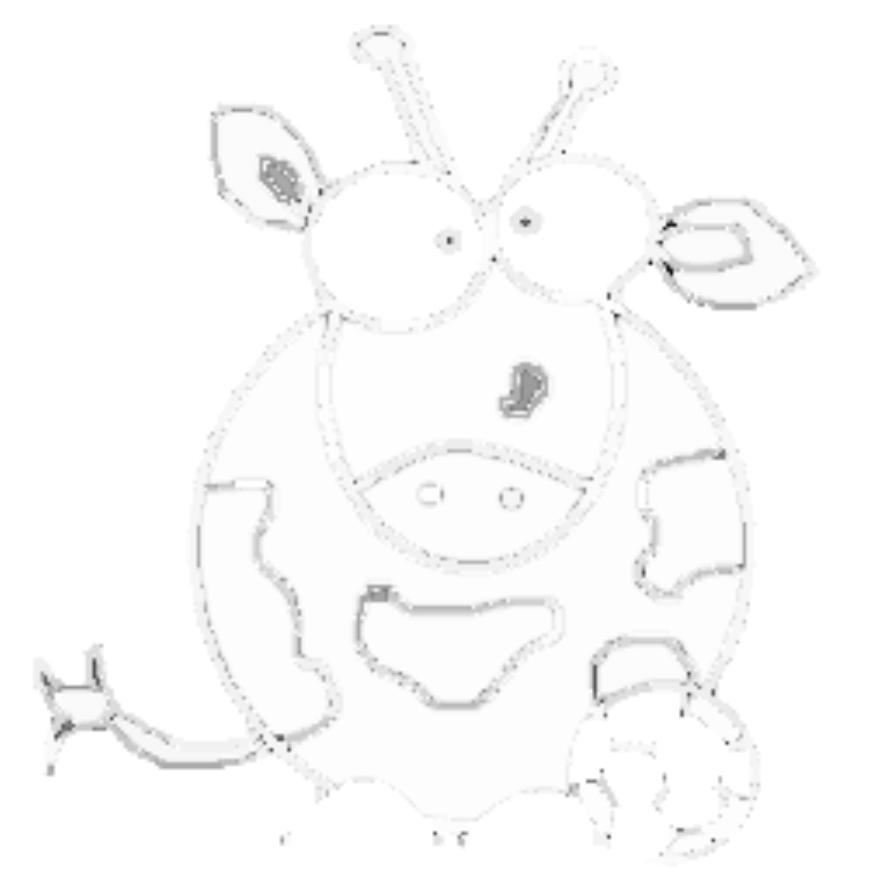}&
\includegraphics[width=0.28\textwidth]{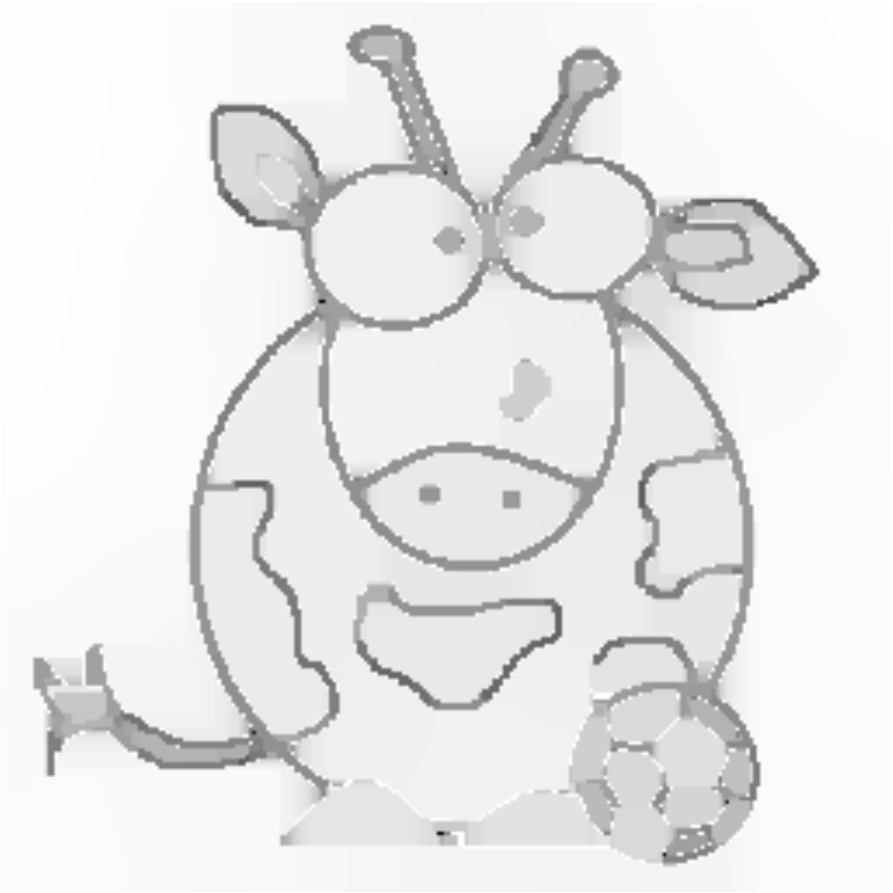}
  \end{tabular}
  \caption{Bias estimation. First row: Statistical bias computed on five hundred noisy realizations of the {\it Giraffe} cartoon image.
    Second row: Deterministic bias computed between the clean data and the recovered solution from clean data $f^*$. 
    In the first column, TV denoising leads to bias.
    In the second column, the debiasing that has been performed has reduced (or suppressed) the method bias. The remaining (small) model bias is due to the necessary regularization.
    In the third column, the difference between $\uhat(f^*)$ and $u_{\alpha}(f^*)$ shows the bias that has been reduced by the debiasing step, hence the method bias.
    \label{fig:Giraffe_bias}}
\end{figure*}

\begin{figure*}[ht!]
  \centering
  \begin{tabular}{m{0.018\textwidth}>{\centering\arraybackslash}m{0.28\textwidth}>{\centering\arraybackslash}m{0.28\textwidth}>{\centering\arraybackslash}m{0.28\textwidth}}
& TV denoising, & Bregman debiasing, & \\
& $\mathbb{B}^{\text{stat}}(u_{\alpha}(f)) = u^*-\mathbb{E}[u_{\alpha}(f)]$ &  $\mathbb{B}^{\text{stat}}(\uhat(f)) = u^*-\mathbb{E}[\uhat(f)]$ &  \\
\rotatebox{90}{With noisy data $f$} &
\includegraphics[width=0.28\textwidth]{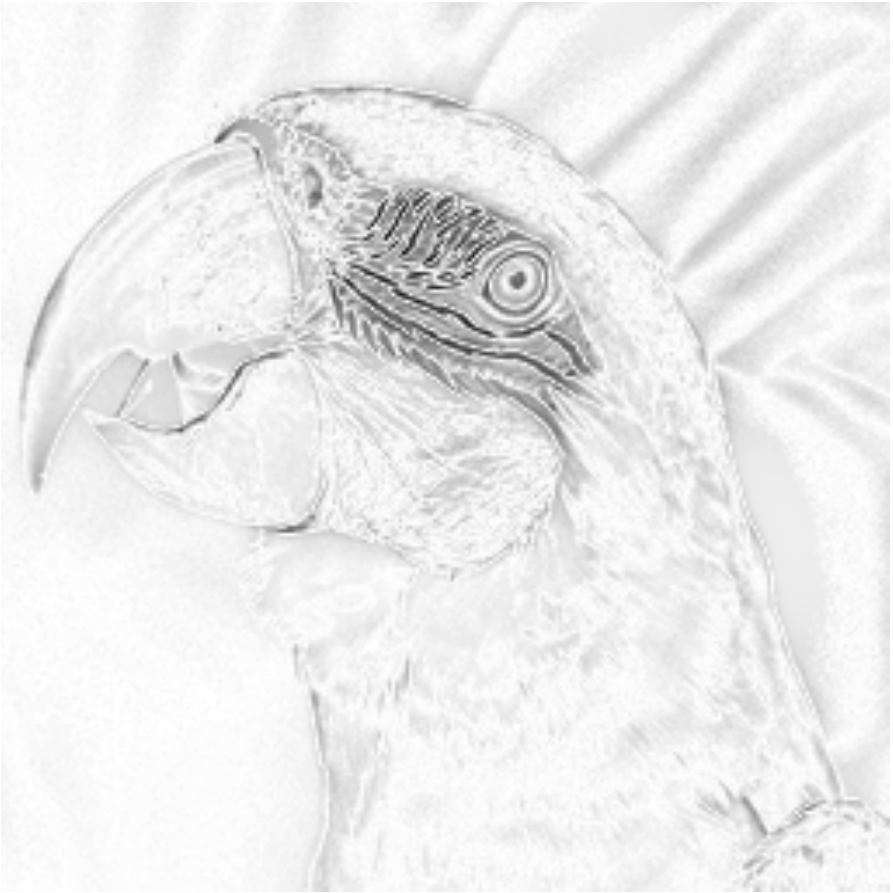}&
\includegraphics[width=0.28\textwidth]{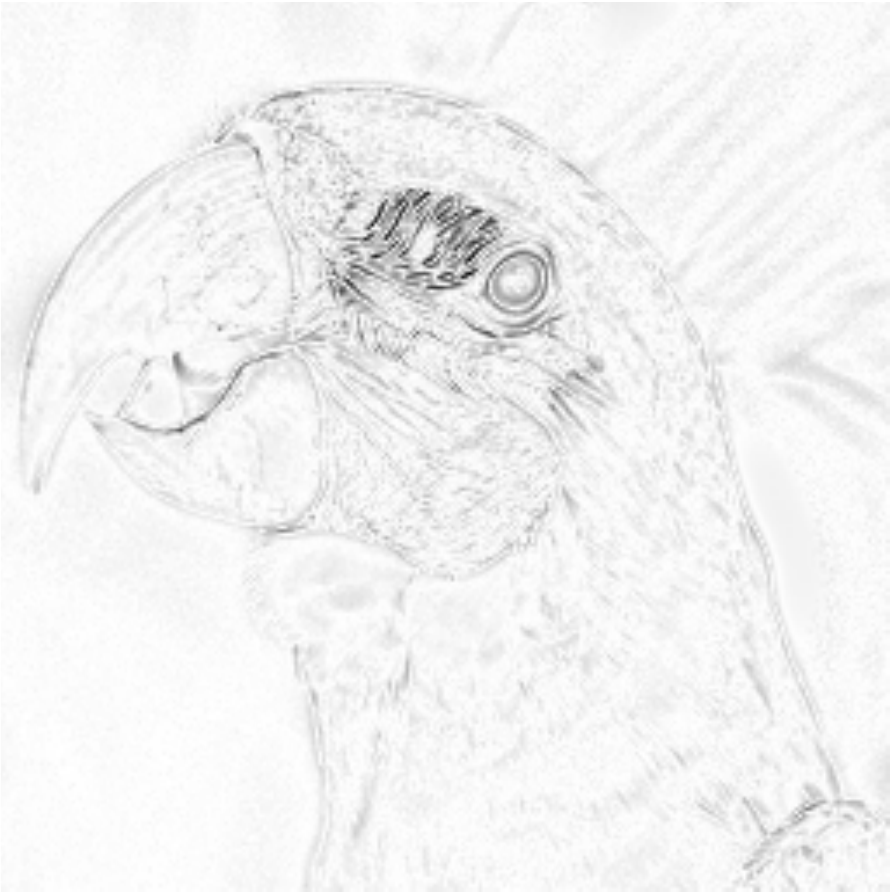}& \vspace{1em}\\
& TV denoising, & Bregman debiasing, &  \\
&  & Model bias:  & Method bias:\\
& $\mathbb{B}^*(u_{\alpha}(f^*)) = u^*-u_{\alpha}(f^*)$ & $\mathbb{B}^*(\uhat(f^*)) = u^*-\uhat(f^*)$ & $\uhat(f^*) - u_{\alpha}(f^*)$\\
\rotatebox{90}{With clean data $f^*$} &
\includegraphics[width=0.28\textwidth]{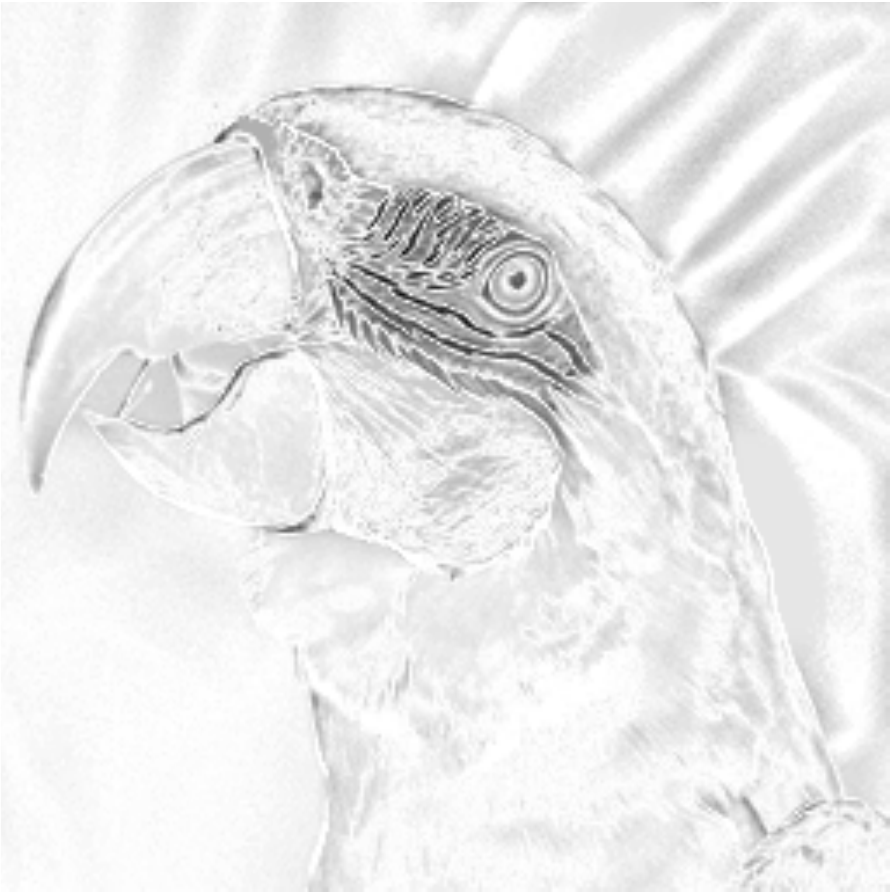}&
\includegraphics[width=0.28\textwidth]{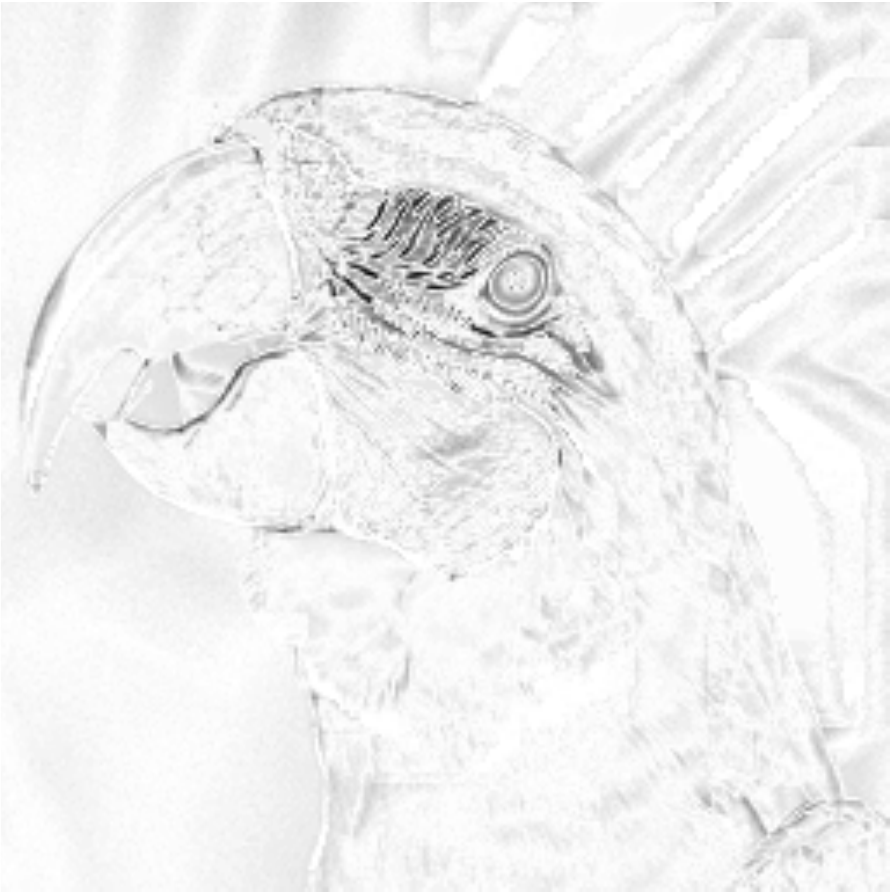}&
\includegraphics[width=0.28\textwidth]{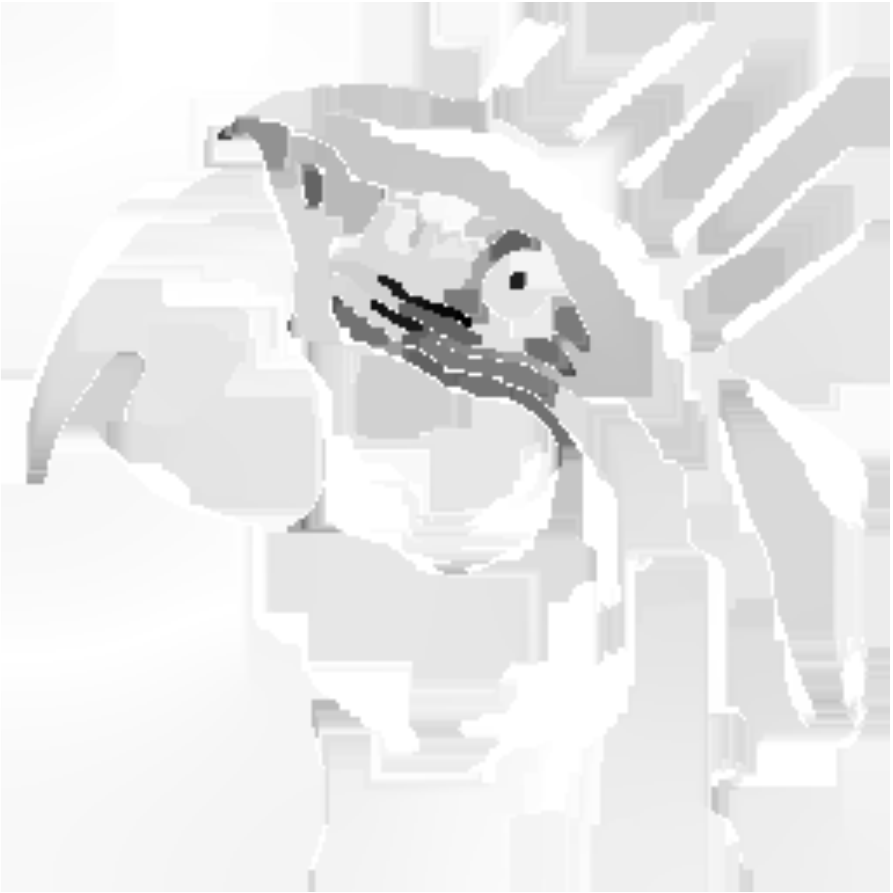}
  \end{tabular}
  \caption{Bias estimation. First row: Statistical bias computed on five hundred noisy realizations of the {\it Parrot} natural image.
    Second row: Deterministic bias computed between the clean data and the recovered solution from clean data $f^*$. 
    On the first column, TV denoising leads to both kinds of bias, model bias and method bias.
    On the second column, the debiasing that has been performed has reduced (or suppressed) the method bias, and the remaining bias is model bias.
    On the third column, the difference between $\uhat(f^*)$ and $u_{\alpha}(f^*)$ shows the bias that has been reduced by the debiasing step, hence the method bias.
    \label{fig:Parrot_bias}}
\end{figure*}

Furthermore, we can assess the bias for both manifolds.
On $\mathcal{M}_{f^*}$ we can only use the deterministic definition \eqref{eq:det_bias} of bias whereas
on $\mathcal{M}_{f}$ we use the statistical definition \eqref{eq:stat_bias}.
Figures \ref{fig:Giraffe_bias} and \ref{fig:Parrot_bias} show the bias estimation on the {\it Giraffe} cartoon image and the natural {\it Parrot} image.
The first row shows the estimations of the statistical bias $\mathbb{B}^{\text{stat}}$ for the two estimators $u_{\alpha}(f)$ and $\uhat(f)$ for noisy data $f$.
In the second row the bias $\mathbb{B}^*$ for the two estimators $u_{\alpha}(f^*)$ and $\uhat(f^*)$ for clean data $f^*$ is displayed.
This deterministic bias can also be decomposed into the associated model and method bias, whereas such a decomposition has not been defined for the statistical bias. 
The overall deterministic bias $\mathbb{B}^*(u_{\alpha}(f^*)) = u^*-u_{\alpha}(f^*)$ for TV denoising appears to be really close to the statistical bias on noisy data in the first row. 
The same applies for the bias of the debiased solutions in the second column.
This confirms that the estimation of the model manifold that we perform with noisy data is indeed a good approximation
to the ideal model manifold for clean data, and that the resulting statistical and deterministic bias are closely related.

Besides, the difference $u^*-\uhat(f^*)$ in the second row shows the remaining bias after the debiasing step, which is model bias.
For the {\it Giraffe} image, this bias is small because the cartoon image is well approximated in the model manifold associated to TV regularization.
The {\it Parrot} image however suffers from a heavier model bias, for example the loss of the small structures around the eye.
Finally, in the third column, the difference $\uhat(f^*) - u_{\alpha}(f^*)$ shows the error that has been removed by the debiasing step, which corresponds to the method bias. 
It is particularly interesting for the {\it Parrot} image. 
Here one can see the piecewise constant areas which correspond to the re-establishment of the lost contrast within the piecewise constant model provided by the model manifold.

\newpage
\subsection{Relation to inverse scale space methods}
 
We finally comment on the relation of the debiasing approaches to Bregman iterations respectively inverse scale space methods,
which are rather efficiently reducing bias as demonstrated in many examples \cite{osh-bur-gol-xu-yin,scherzer,gilboa}. 
The Bregman iteration is iteratively constructed by
\begin{align*}
	u^{k+1} &\in \text{arg}\min_{u \in \X}  \frac{1}2 \Vert A u - f \Vert_{\Y}^2 + \alpha D_J^{p^k}(u,u^k), \\
	p^{k+1} &= p^k + \frac{1}\alpha A^* (f- Au^{k+1}) \in \partial J(u^{k+1}).
\end{align*}
In the limit $\alpha \rightarrow \infty$ we obtain the time continuous inverse scale-space method, 
which is the differential inclusion
\begin{equation*}
	\partial_t p(t) = A^* (f -Au(t)), \hfill p(t) \in \partial J(u(t)),
\end{equation*}
with initial values $u(0)=0$, $p(0)=0$. 
A strong relation to our debiasing approach comes from the characterization of the primal solution given $p(t)$ \cite{adaptive1,adaptive2,osheryin}
\begin{equation*}
	u(t) \in \text{arg}\min_{u \in \X} \Vert Au -f \Vert_{\Y}^2 \quad \text{s.t. } p(t) \in \partial J(u(t)).
\end{equation*}
This reconstruction step is exactly the same as the variational debiasing step using the Bregman distance, 
however with a different preceding construction of the subgradient $p(t)$ 
(noticing that $t$ corresponds to $\frac{1}\alpha$ for the variational method). 

From the last observation it becomes apparent that the Bregman debiasing approach with \eqref{eq:subgradient} and 
\eqref{eq:secondStepBregman} is exactly equivalent if the variational method yields the same subgradient 
as the inverse scale space method, i.e. $\pa=p(\frac{1}{\alpha})$. 
This can indeed happen, as the results for singular vectors demonstrate \cite{benninggroundstates}. 
Moreover, in some cases there is full equivalence for arbitrary data, 
e.g. in a finite-dimensional denoising setting investigated in \cite{spectraltv2}. 
It has been shown that for $A$ being the identity 
and $J(u) = \Vert \Gamma u  \Vert_{1}$ with $\Gamma \Gamma^*$ being diagonally dominant the identity 
$\pa=p(\frac{1}{\alpha})$ holds, 
which implies that the Bregman debiasing approach and the inverse scale space method yield exactly the same solution. 
For other cases that do not yield a strict equivalence 
we include the Bregman iteration for comparison in numerical studies discussed below.

\section{Numerical Implementation}
\label{sec:Implementation}

\begin{algorithm*}
\caption{\textbf{Primal-Dual Algorithm for Variational Regularization (Step 1)}}
{
\begin{algorithmic}
\Require $f$, $\alpha > 0$ 
\Ensure $\sigma,\tau > 0$, $u^0 = \bar{u}^0 = 0, \; y_1^0 = y_2^0 = 0$
	\While{not converged}
		\State $y_1^{k+1} = \frac{y_1^k + \sigma Au^k - \sigma f}{1 + \sigma}$
		\State $y_2^{k+1} = \Pi_{B^\infty_{\alpha}}(y_2^k + \sigma \Gamma u^k)$
		\State $u^{k+1} = u^k - \tau (A^* y_1^{k+1} + \Gamma^* y_2^{k+1})$
		\State $\bar{u}^{k+1} = 2u^{k+1}  - u^{k}$
	\EndWhile\\
\Return $\ua = u^{k+1}$, $\pa = \frac{1}{\alpha}A^*(f - A\ua)$ (c.f. \eqref{eq:subgradient})
\end{algorithmic}
}
\label{alg:step1}
\end{algorithm*}

\begin{algorithm*}
\caption{\textbf{Primal-Dual Algorithm for Bias-Reduction with $\MB$ (Step 2 a))}}
{
\begin{algorithmic}
\Require $f$, $\gamma > 0$, $\pa$, which is obtained via Algorithm \ref{alg:step1}
\Ensure $\sigma,\tau > 0$, $u^0 = \bar{u}^0 = 0$, $y_1^0 = y_2^0 = 0$
	\While{not converged}
		\State $y_1^{k+1} = \frac{y_1^k + \sigma Au^k - \sigma f}{1 + \sigma}$
		\State $y_2^{k+1} = \Pi_{B^\infty_{\gamma}}(y_2^k + \sigma \Gamma u^k)$
		\State $u^{k+1} = u^k - \tau (A^* y_1^{k+1} + \Gamma^* y_2^{k+1} - \gamma \pa)$
		\State $\bar{u}^{k+1} = 2u^{k+1} - u^{k}$
	\EndWhile\\
\Return $\uhat = u^{k+1}$
\end{algorithmic}
}
\label{alg:case_a}
\end{algorithm*}

\begin{algorithm*}
\caption{\textbf{Primal-Dual Algorithm for Bias-Reduction with $\MIC$ (Step 2 b))}}
{
\begin{algorithmic}
\Require $f$, $\gamma > 0$ and $\pa$, which is obtained via Algorithm \ref{alg:step1}.
\Ensure $\sigma,\tau > 0$, $u^0 = z^0 = \bar{u}^0 = \bar{z}^0 = 0 , \; y_1^0 = y_2^0 = y_3^0 = 0$
	\While{not converged}
		\State $y_1^{k+1} = \frac{y_1^k + \sigma Au^k - \sigma f}{1 + \sigma}$
		\State $y_2^{k+1} = \Pi_{B^\infty_{\gamma}}(y_2^k + \sigma \Gamma(u^k - z^k))$
		\State $y_3^{k+1} = \Pi_{B^\infty_{\gamma}}(y_3^k + \sigma \Gamma z^k)$
		\State $u^{k+1} = u^k - \tau (A^* y_1^{k+1} + \Gamma^* y_2^{k+1} - \gamma \pa)$
		\State $z^{k+1} = z^k - \tau (-\Gamma^* y_2^{k+1} + \Gamma^* y_3^{k+1} + 2 \gamma \pa)$
		\State $\bar{u}^{k+1} = 2u^{k+1} - u^{k}$
		\State $\bar{z}^{k+1} = 2z^{k+1} - z^{k}$
	\EndWhile\\
\Return $\uhat= u^{k+1}$
\end{algorithmic}
}
\label{alg:case_b}
\end{algorithm*}

In Section \ref{sec:Debiasing} we have introduced a two-step-method (cf. Eq. \eqref{eq:firstStep} -- \eqref{eq:secondStepBregman}) 
in order to compute a variationally regularized reconstruction with reduced method bias in the sense discussed in Section \ref{sec:BiasModelManifolds}.
Its solution requires the minimization of the data fidelity over the model manifold defined by a zero Bregman distance or a zero infimal convolution thereof, respectively. 

This constraint is difficult to realize numerically, but can be approximated by a rather standard variational problem. 
We can translate the hard constraint into a soft constraint such that for $\gamma > 0$ the reformulated problems read:
\begin{alignat*}{2}
&\text{a) } &&\uhat\in \arg \min_{u \in \X} \frac{1}{2} \Vert A u - f \Vert_\Y^2 + \gamma {D}^{\pa}_J(u,\ua),\\
&\text{b) } &&\uhat\in \arg \min_{u \in \X} \frac{1}{2} \Vert A u - f \Vert_\Y^2 + \gamma \ICB(u,u_\alpha).
\end{alignat*}
For $\gamma \to \infty$ we obtain the equivalence of the hard and soft constrained formulations. 
However, for the numerical realization already a moderately large $\gamma$ is enough to enforce the constraint up to a satisfactory level. 
For our simulations we chose $\gamma = 1000$, but our tests showed that already for $\gamma \geq 500$ the value of the Bregman distance
or its infimal convolution stays numerically zero. 
Of course the choice of the parameter $\gamma$ depends on the specific problem we aim to solve and probably has to be adjusted slightly for different image sizes or 
involved operators. 
\subsubsection*{Discretization}
For our numerical experiments we choose the setting $\X = \mathbb{R}^n, \; \Y = \mathbb{R}^d$ and $J(u) = \Vert \Gamma u \Vert_{1}$.
In general $\Gamma \in \mathbb{R}^{n \times m}$ denotes a discrete linear operator,  
for the experiments with total variation regularization we choose a discretization of the gradient with forward finite differences.
For a general linear forward operator $A \in \mathbb{R}^{n \times d}$ we hence end up with the following discrete optimization problems:
\begin{equation*}
\begin{alignedat}{5}
&\text{1.} \; && \quad && \ua &&\in \arg \min_{u \in \R^n} \frac{1}{2}\Vert Au - f \Vert_2^2 + \alpha \Vert \Gamma u \Vert_1, 
\end{alignedat}
\end{equation*}
\begin{equation*}
\begin{alignedat}{5}
&\text{2.} \; && \text{a)} \quad && \uhat&&\in \arg \min_{u \in \R^n} \frac{1}{2} \Vert Au - f \Vert_2^2\\
& \; && \quad &&  && + \gamma \left( \Vert \Gamma u \Vert_1 - \langle \pa,u \rangle \right),\\
& \; && \text{b)} \quad && \uhat&&\in \arg \min_{u \in \R^n} \frac{1}{2} \Vert Au - f \Vert_2^2\\
& \; && \quad &&  && + \gamma \min_{z \in \R^n} \Big\{ \Vert \Gamma (u - z) \Vert_1 - \langle \pa,u-z \rangle\\
& \; && \quad &&  && +\Vert \Gamma z \Vert_1 + \langle \pa,z \rangle \Big\},
\end{alignedat}
\end{equation*}
where we leave out the particular spaces for the primal (and dual) variables for the sake of simplicity in the following. 
Taking a closer look at these minimization problems, we observe that we can exactly recover the optimization problem 
in the first step by means of problem $2. \; \text{a)}$ if we choose $\gamma = \alpha$ and $\pa = 0$. 
We therefore concentrate on the minimization problems in the second step.
\subsubsection*{Primal-dual and dual formulation}
Using the notion of convex conjugates \cite{Rockafellar}, the corresponding primal-dual and dual formulations of 
our problems are given by 
\begin{align*}
\text{a)} \quad &\min_u \max_{y_1,y_2} ~ \langle y_1,Au \rangle + \langle  y_2,\Gamma u \rangle - \gamma ~ \langle \pa, u \rangle\\
& \hspace{3em} - \frac{1}{2}\Vert y_1 \Vert_2^2 - \langle y_1,f \rangle - \iota_{B^{\infty}_\gamma}(y_2) \\
= &\max_{y_1,y_2} ~ - \frac{1}{2}\Vert y_1 \Vert_2^2 - \langle y_1,f \rangle - \iota_{B^{\infty}_\gamma}(y_2) \\ 
& \hspace{3em}- \iota_{\gamma \pa}(A^* y_1 + \Gamma ^* y_2),\\
\text{b)} \quad &\min_{u,z} \max_{y_1,y_2,y_3} ~ \langle y_1,Au \rangle + \langle y_2,\Gamma u - \Gamma z \rangle \\
& \hspace{3em} + \langle  y_3,\Gamma z \rangle - \gamma \; \langle \pa, u\rangle + 2\gamma \; \langle \pa, z \rangle\\
& \hspace{3em} - \frac{1}{2}\Vert y_1 \Vert_2^2 - \langle y_1,f \rangle\\
& \hspace{3em} - \iota_{B^{\infty}_\gamma}(y_2)- \iota_{B^{\infty}_\gamma}(y_3) \\
= &\max_{y_1,y_2,y_3} ~ - \frac{1}{2}\Vert y_1 \Vert_2^2 - \langle y_1,f \rangle \\
& \hspace{3em} - \iota_{B^{\infty}_\gamma}(y_2)- \iota_{B^{\infty}_\gamma}(y_3) \\
& \hspace{3em}- \iota_{\gamma \pa}(A^* y_1 + \Gamma ^* y_2) \\
& \hspace{3em}- \iota_{-2 \gamma \pa}(-\Gamma^*y_2 + \Gamma^* y_3),
\end{align*}
\subsubsection*{Solution with a primal-dual algorithm}
In order to find a saddle point of the primal-dual formulations, we apply a version of the 
popular first-order primal-dual algorithms \cite{pock-cre-bisch-cha,Esser_et_al,Chambolle_and_Pock}. 
The basic idea is to perform gradient descent on the primal and gradient ascent on the dual variables. 
Whenever the involved functionals are not differentiable, here the $\ell^1$-norm, this comes down to computing the corresponding proximal mappings. 
The specific updates needed for our method are summarized in Algorithm \ref{alg:step1} for the first regularization problem, 
and Algorithm \ref{alg:case_a} and Algorithm \ref{alg:case_b} for the two different debiasing steps. 

We comment on our choice of the stopping criterion. 
We consider the primal-dual gap of our saddle point problem, which is defined as the difference between the primal and the dual problem for the current values of variables.
As in the course of iterations the algorithm is approaching the saddle point, this gap converges to zero. 
Hence we consider our algorithm converged if this gap is below a certain threshold $\epsilon_1 > 0$. 
We point out that the indicator functions regarding the $\ell^\infty$-balls are always zero due to the projection of the dual variables in every update. 
Since the constraints with respect to the other indicator functions, for example
\begin{align*}
 A^* y_1 + \Gamma^* y_2 - \gamma \pa = 0 
\end{align*}
in case a), are hard to satisfy exactly numerically, we instead control that 
the norm of the left-hand side is smaller than a certain threshold $\epsilon_2$ (respectively $\epsilon_3$ for case b)). 
All in all we stop the algorithm if the current iterates satisfy: 
\begin{align*}
\text{a)} \quad  &  PD(u,y_1,y_2) = \big(-\gamma \langle \pa,u \rangle\\
& \qquad + \frac{1}{2} \Vert Au - f \Vert_2^2 + \gamma \Vert \Gamma u \Vert_1\\
& \qquad + \frac{1}{2}\Vert y_1 \Vert_2^2 + \langle y_1,f \rangle \big) / n < \epsilon_1
\end{align*}
and
\begin{align*}
&\Vert A^* y_1 + \Gamma ^* y_2 - \gamma \pa \Vert_1 /n < \epsilon_2
\end{align*}
\begin{align*}
\text{b)} \quad  & PD(u,z,y_1,y_2) = \big(-\gamma \langle \pa,u \rangle + 2\gamma\langle \pa,z \rangle \\
& \qquad + \frac{1}{2} \Vert Au - f \Vert_2^2\\
& \qquad + \gamma \Vert \Gamma u - \Gamma z\Vert_1 + \gamma \Vert \Gamma z \Vert_1\\
& \qquad + \frac{1}{2}\Vert y_1 \Vert_2^2 + \langle y_1,f \rangle \big)/n < \epsilon_1
\end{align*}
and 
\begin{align*}
&\Vert A^* y_1 + \Gamma ^* y_2 - \gamma \pa \Vert_1/n < \epsilon_2, \\
&\Vert -\Gamma^* y_2 + \Gamma ^* y_3 + 2\gamma \pa \Vert_1 /n < \epsilon_3.
\end{align*}
Note that we normalize the primal-dual gap and the constraints by the number of primal pixels $n$ in order to keep the 
thresholds $\epsilon_1, \epsilon_2$ and $\epsilon_3$ independent of varying image resolutions. 
We give an example for the specific choice of parameters for our total variation denoising problems in Table \ref{tab:params}.

\begin{table}[t!]
\centering
 \begin{tabular}{|lr|}
\hline
\textbf{Parameters}& \\
 \hline
 $\alpha$ & $0.3$ \\
 $\gamma$ & $1000$\\
 $\sigma = \tau$ & $\frac{1}{\sqrt{8}}$ \\
 $\epsilon_1$ & $10^{-5}$ \\
 $\epsilon_2$ & $10^{-6}$ \\
 $\epsilon_3$ & $10^{-6}$ \\
 \hline
\end{tabular}
\label{tab:params}
\caption{Choice of parameters for a total variation denoising problem of an image 
of size 256x256 with values in $[0,1]$, corrupted by Gaussian noise with variance $0.05$.}
\end{table}

\begin{figure*}[t!]
\centering
\includegraphics[width=0.7\textwidth]{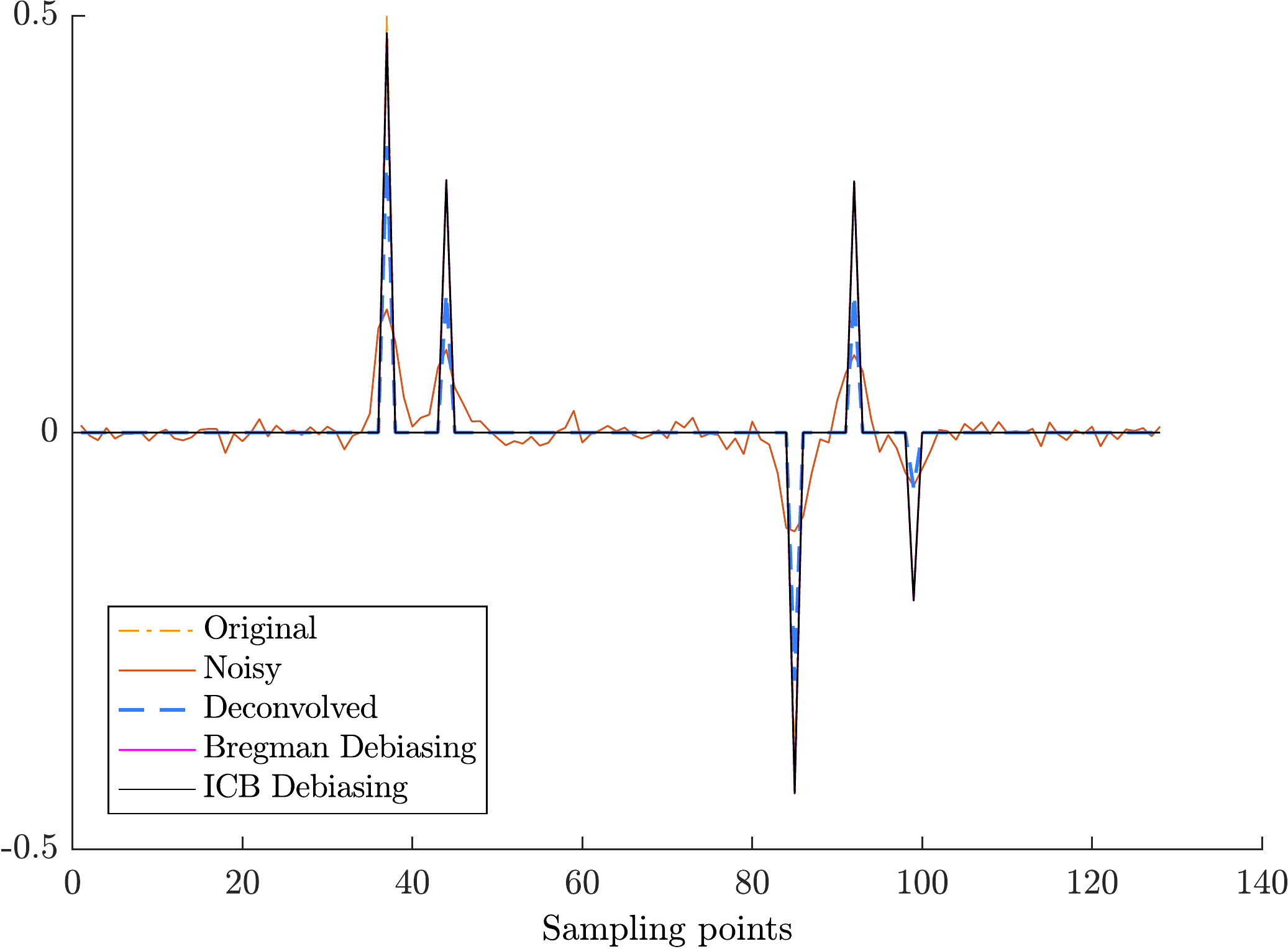}
\caption{$\ell^1$-deconvolution of a 1D signal. 
Original and noisy convolved signals, and $\ell^1$-reconstruction, Bregman debiasing and Infimal convolution debiasing.
\label{fig:L1deconv1D_a}}
\end{figure*}

\begin{figure}[t!]
\centering
\includegraphics[width=0.45\textwidth]{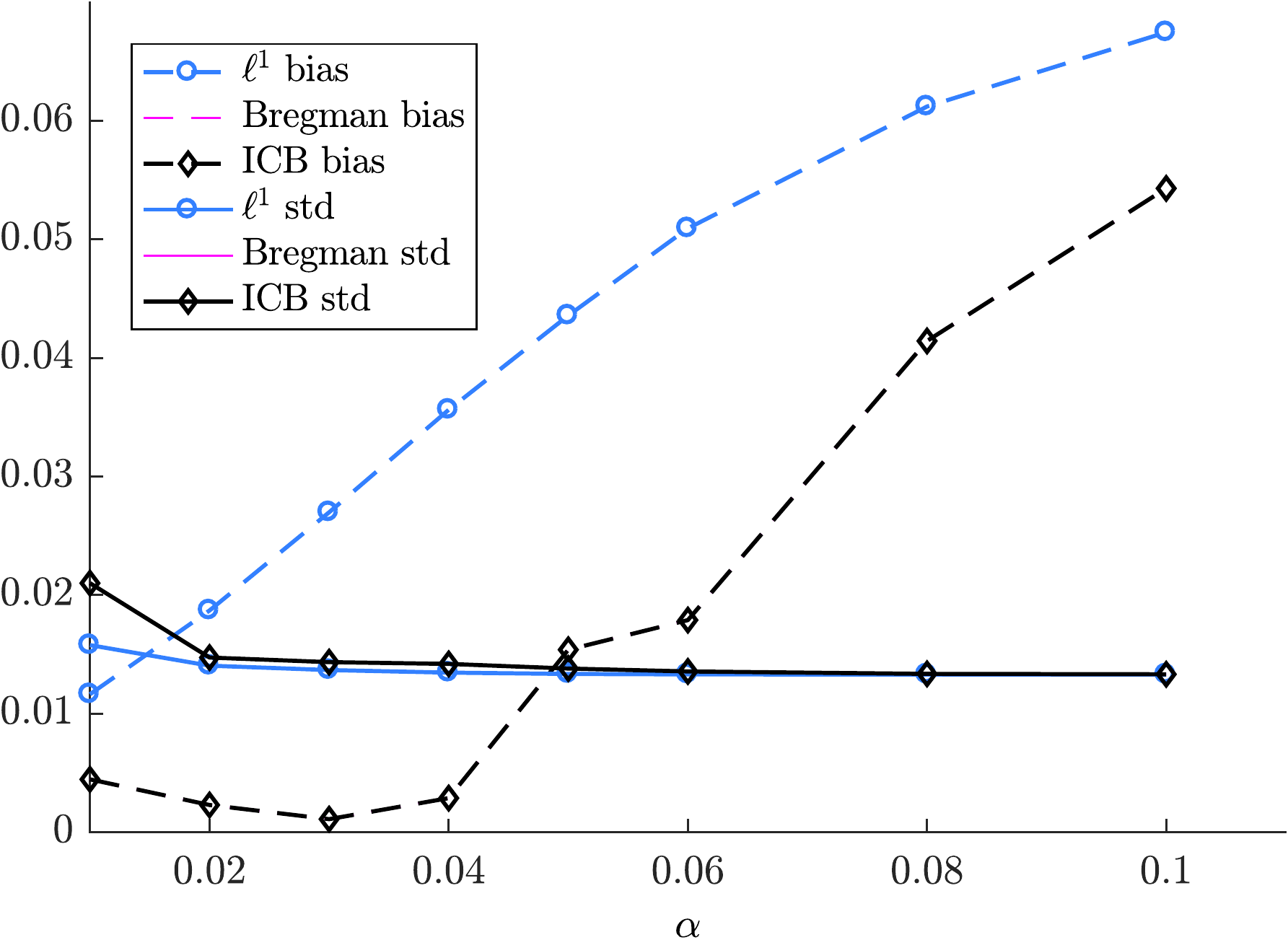}
\caption{$\ell^1$-deconvolution of a 1D signal. 
Average bias and variance computed over one thousand realizations of the noisy signal for
the noisy, restored and debiased signals.\label{fig:L1deconv1D_b}}
\end{figure}

\section{Numerical Results}
\label{sec:exp}

This section provides further experiments and numerical results that illustrate the proposed debiasing method.

\subsection{$\ell^1$-deconvolution}

The first application that we illustrate is the deconvolution of a one-dimensional signal using 
anisotropic shrinkage \eqref{eq:aniso_l1}.
Figure \ref{fig:L1deconv1D_a} displays the original signal, the blurry signal corrupted by additive Gaussian noise with standard deviation $\sigma=0.05$, 
the $\ell^1$-reconstructed signal and the debiased signals computed over the Bregman manifold $\MB$ 
and the infimal convolution subspace $\MIC$.
The last two completely overlap on these two plots.
One can see that provided that the $\ell^1$-reconstruction finds the right peak locations, 
the debiasing method is able to restore the amplitude of the original signal.

Figure \ref{fig:L1deconv1D_b} displays the evolution of the average bias of the estimated signals as well as the standard deviation of the error. 
They were computed over one thousand noisy realizations for the noisy, $\ell^1$-reconstructed and debiased signals, 
as a function of the regularization parameter $\alpha$.
These curves illustrate several behaviors: 
As expected, the residual variance decreases when the regularization parameter increases.
For a very low value of $\alpha$, the debiasing reintroduces some noise so the average variance is higher 
than for the $\ell^1$-reconstructed signal, revealing the bias-variance trade-off that has to be settled.
As $\alpha$ increases, the gap between the variance of the $\ell^1$-reconstructed and debiased signal vanishes.
On the other hand, the average bias is indeed smaller for the debiased signal than for the $\ell^1$-reconstructed signal.
Besides, for small values of the regularization parameter the average bias for the debiased signal is stable and close to zero,
showing the effective reduction of the method bias.
Then it increases by steps which correspond to the progressive vanishing of the peaks, related to model bias.
All in all, these plots show the ability of the proposed approach to reduce the method bias 
(here, the loss of intensity on the peaks), hence allowing for more efficient noise reduction and reconstruction  
for a wider range of regularization parameters.

\subsection{Anisotropic TV denoising}
In this subsection we study debiasing by means of the discrete ROF-model \cite{rof} given by:
\begin{equation} \label{ROFmodel}
	\ua(f) \in \arg\min_{u\in\R^n}  \frac{1}{2}\|u-f\|_{2}^2 + \alpha \Vert \Gamma u \Vert_1,
\end{equation}
where the $1$-norm is anisotropic, i.e.
\begin{align*}
\Vert \Gamma u \Vert_1 = \sum_{i=1}^{m/2} \vert (\Gamma u)_{1,i} \vert + \vert (\Gamma u)_{2,i} \vert,
\end{align*}
with $(\Gamma u)_1$ and $(\Gamma u)_2$ denoting the discrete gradient images in horizontal and vertical direction, respectively. 
We compare the original denoising result of Problem \eqref{ROFmodel} to the proposed debiased solutions 
obtained with the Bregman manifold $\MB$ or the infimal convolution subspace $\MIC$.

\begin{figure*}[ht!]
\center
\begin{tabular}{ccc}
(a) TV / residual vs. $\alpha$ & (b) PSNR vs. $\alpha$, {\it Giraffe} & (c) PSNR vs. $\alpha$, {\it Parrot}\\
\includegraphics[width=0.3\textwidth]{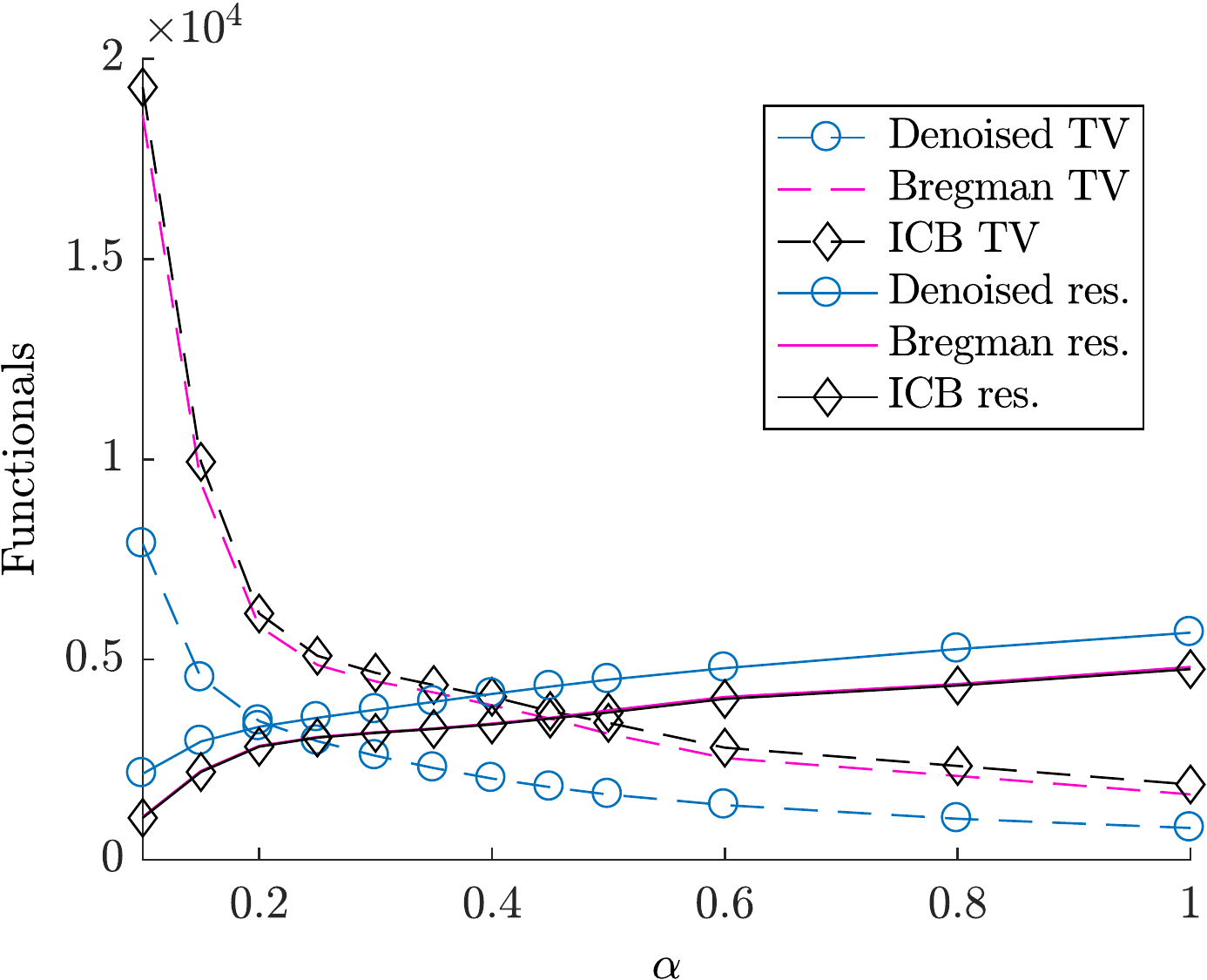}&
\includegraphics[width=0.3\textwidth]{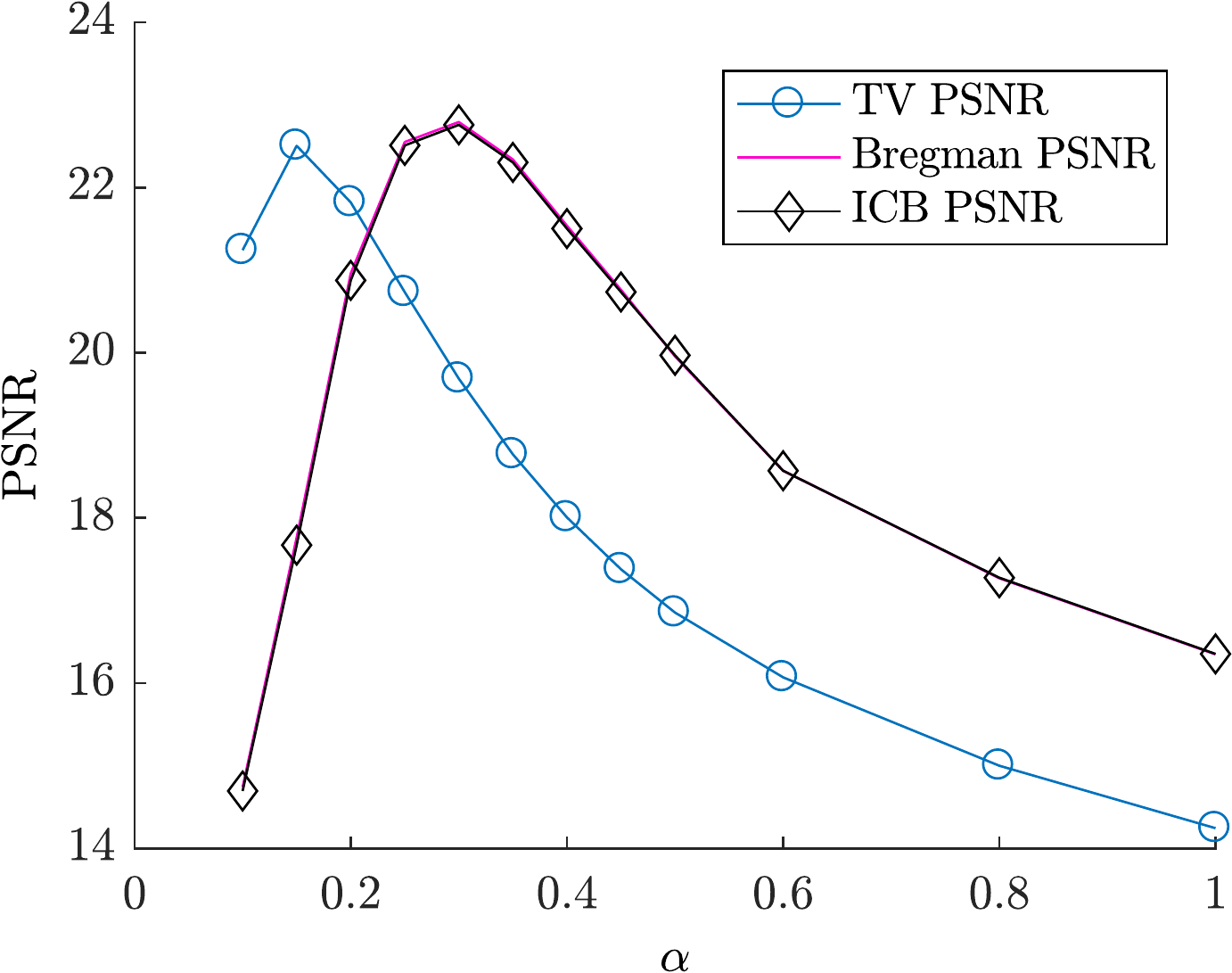}&
\includegraphics[width=0.3\textwidth]{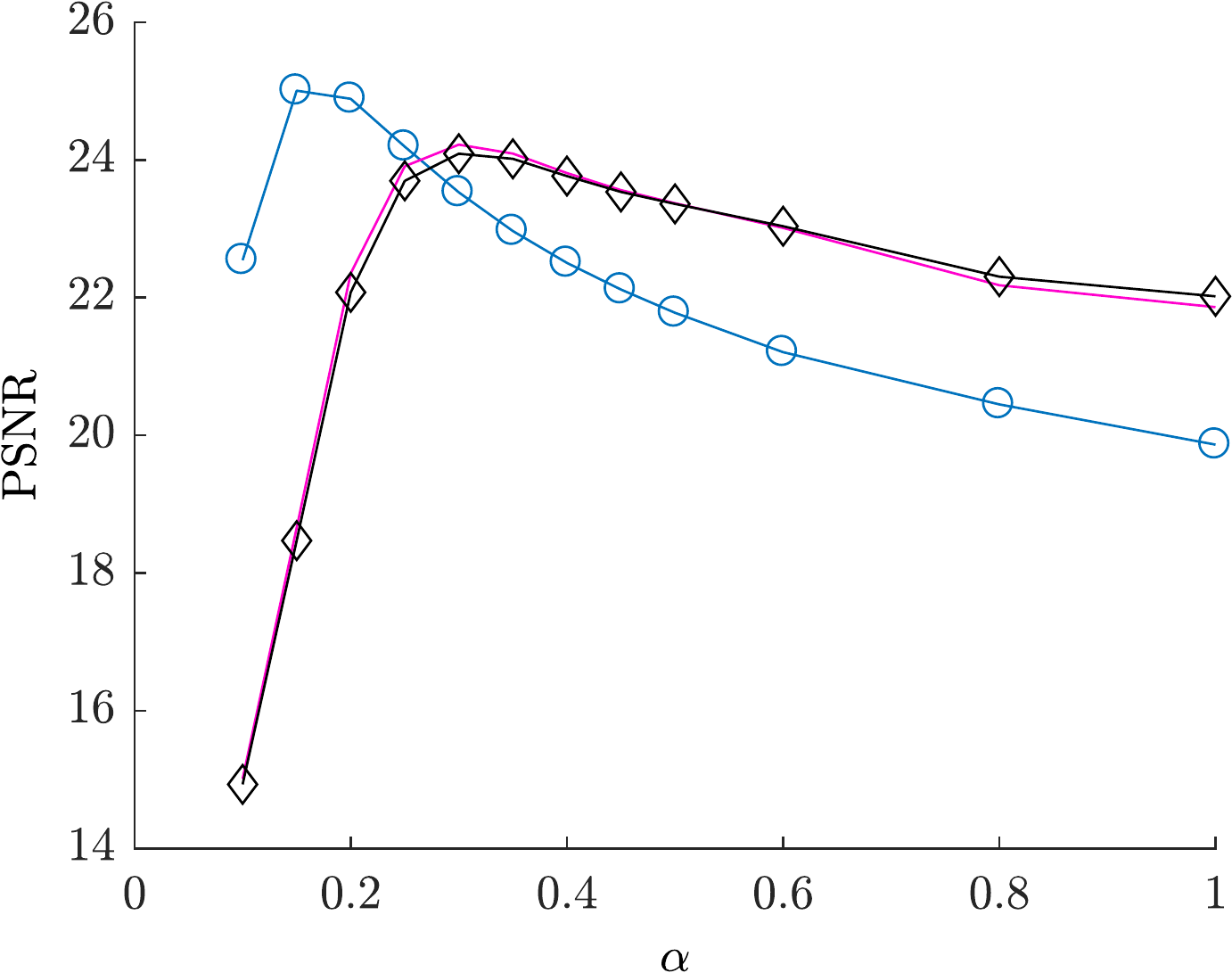}
\end{tabular}
\caption{Evolution of (a) The total variation and the residual for the cartoon {\it Giraffe} image, 
(b) The average PSNR for TV denoising, Bregman debiasing and infimal convolution debiasing for the cartoon {\it Giraffe} image
and (c) The average PSNR for TV denoising, Bregman debiasing and infimal convolution debiasing for the natural {\it Parrot} 
image as a function of the regularization parameter $\alpha$.\label{fig:TVres_psnr}}
\end{figure*}

\subsubsection{Cartoon image}

The {\it Giraffe} cartoon image has been designed not to have model bias; 
it is piecewise constant, which makes it suitable for TV denoising and allows us to study the reduction 
of the method bias only.
It takes values in $[0,1]$ and 
has been artificially corrupted with additive Gaussian noise with zero mean and variance $\sigma^2=0.05$, 
reaching an initial PSNR of about 13dB. The original image and a noisy realization are already 
displayed on the first line of Fig.~\ref{fig:firstResults} in Section \ref{sec:Debiasing}.

Figure \ref{fig:giraffe} displays the TV denoising result as well as 
the debiased solutions computed on the Bregman manifold $\MB$
or the infimal convolution subspace $\MIC$ for different values of the regularization parameter $\alpha$.
On the first line, $\alpha=0.15$ is the optimal 
regularization parameter for TV denoising (in terms of PSNR, see Fig.~\ref{fig:TVres_psnr}-(b)).
However, when performing the debiasing, noise is strongly amplified.
On the second line, $\alpha=0.3$ is the optimal  
regularization parameter for debiasing, and overall, (in terms of PSNR, see Fig.~\ref{fig:TVres_psnr}-(b)).
On the third line $\alpha=0.6$ leads to an oversmoothed solution, but the debiasing step still allows to recover 
a lot of the lost contrast. 

Since we expect the variational method to systematically underestimate the value of the regularization functional and 
overestimates the residual (see \cite{benninggroundstates} for a precise computation on singular values), 
we compare the corresponding quantities when varying $\alpha$ in Figure \ref{fig:TVres_psnr}-(a). 
We observe that for a very large range of values of $\alpha$ there appears to be an almost constant offset between the values 
for the solution $\ua(f)$ and the debiased solution $\uhat(f)$ (except for very small values of $\alpha$, when noise dominates). 
This seems to be due to the fact that the debiasing step can correct the bias in the regularization functional 
(here total variation) 
and residual to a certain extent. This corresponds well to the plot of PSNR vs. $\alpha$ in Fig.~\ref{fig:TVres_psnr}-(b), 
which confirms that the PSNR after the debiasing step is significantly larger than the one in $\ua(f)$ 
for a large range of values of $\alpha$, which contains the ones relevant in practice. 
The fact that the PSNR is decreased by the debiasing step for very small $\alpha$ corresponds to the fact that indeed 
the noise is amplified in such a case, visible also in the plots for the smallest value of $\alpha$ in Figure \ref{fig:giraffe}. 

Altogether, these results show that the proposed debiasing approach improves the denoising of the
cartoon image both visually and quantitatively.

\begin{figure*}[th!]
\center
\begin{tabular}{m{0.02\textwidth}>{\centering\arraybackslash}m{0.28\textwidth}>{\centering\arraybackslash}m{0.28\textwidth}>{\centering\arraybackslash}m{0.28\textwidth}}
& {\bf \large TV denoising} & {\bf \large Bregman debiasing} & {\bf \large ICB debiasing}\\
\rotatebox{90}{$\alpha=0.15$} & 
\includegraphics[width=0.28\textwidth]{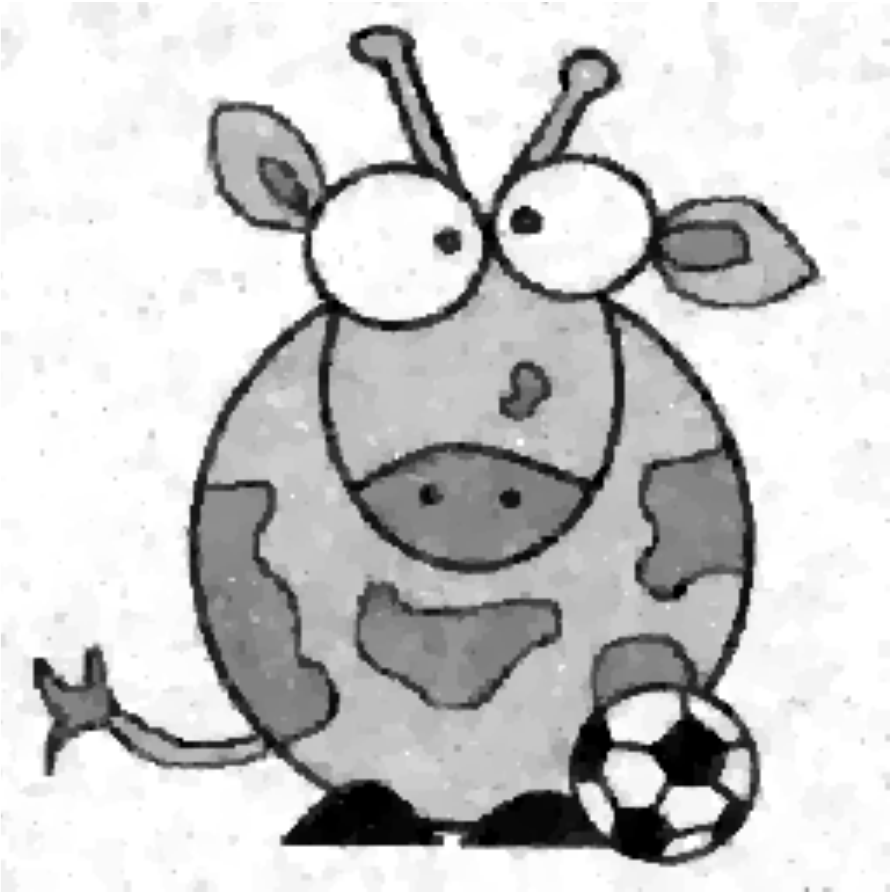}&
\includegraphics[width=0.28\textwidth]{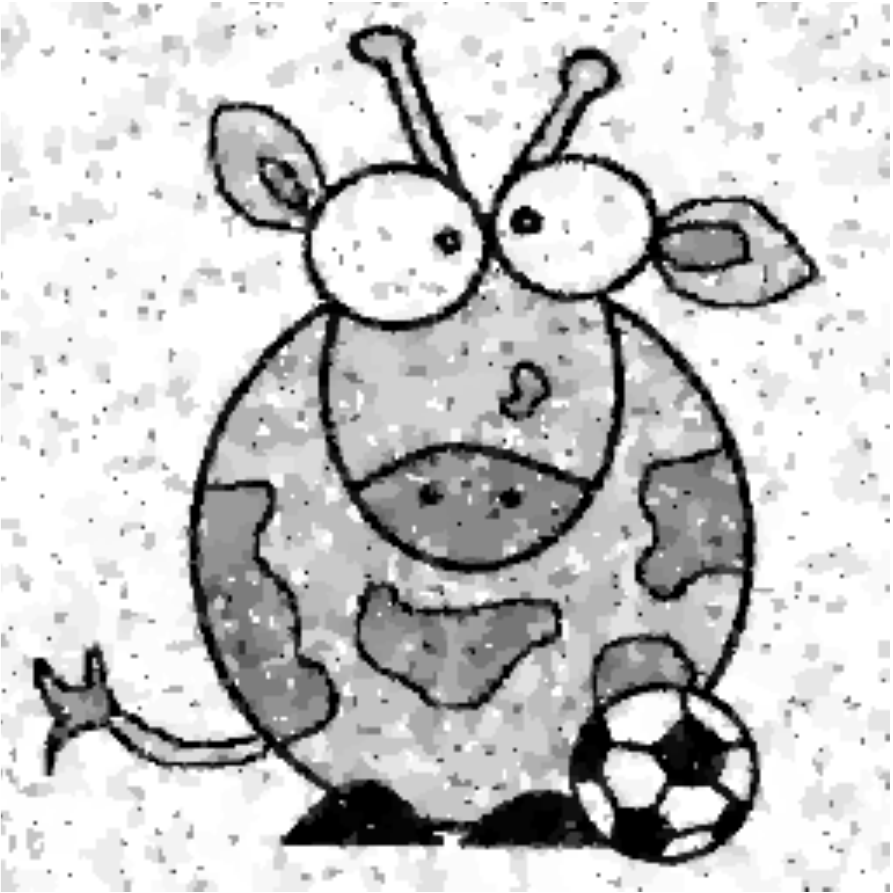}&
\includegraphics[width=0.28\textwidth]{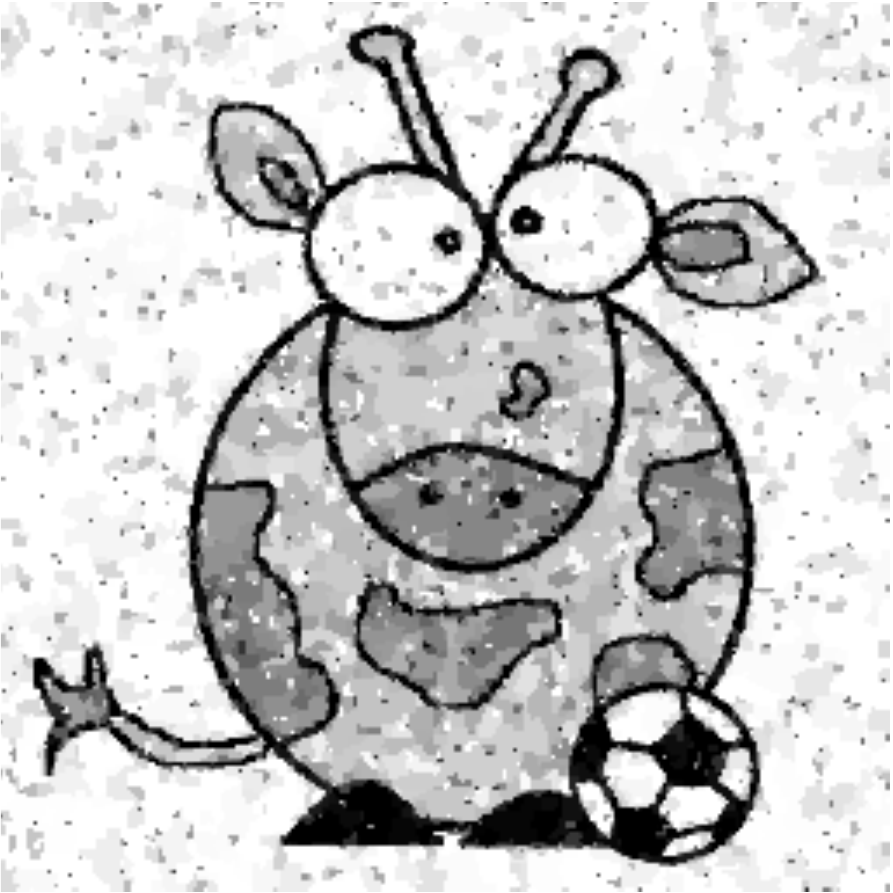}\\
& $PSNR = 22.43$ & $PSNR = 17.82$ & $PSNR = 17.69$\\
\rotatebox{90}{$\alpha=0.3$ (optimal)} & 
\includegraphics[width=0.28\textwidth]{Giraffe_TVdenoised_lambda0-3}&
\includegraphics[width=0.28\textwidth]{Giraffe_Bregmandebiased_lambda0-3}&
\includegraphics[width=0.28\textwidth]{Giraffe_ICdebiased_lambda0-3}\\
& $PSNR = 19.63$ & $PSNR = 22.75$ & $PSNR = 22.70$\\
\rotatebox{90}{$\alpha=0.6$} & 
\includegraphics[width=0.28\textwidth]{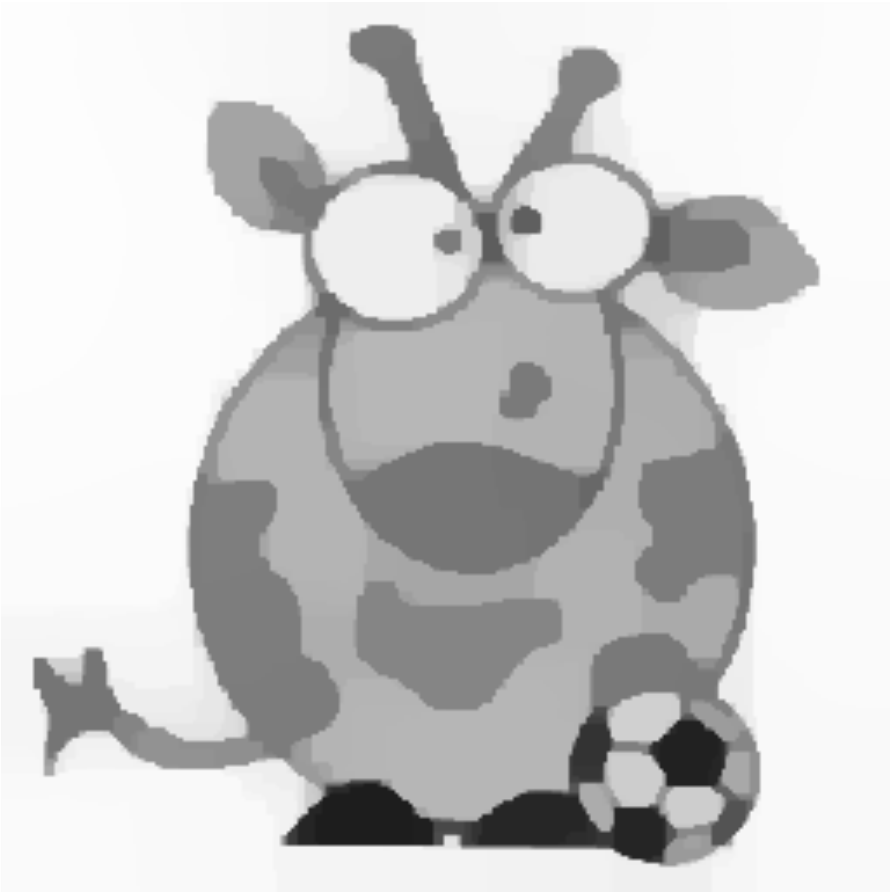}&
\includegraphics[width=0.28\textwidth]{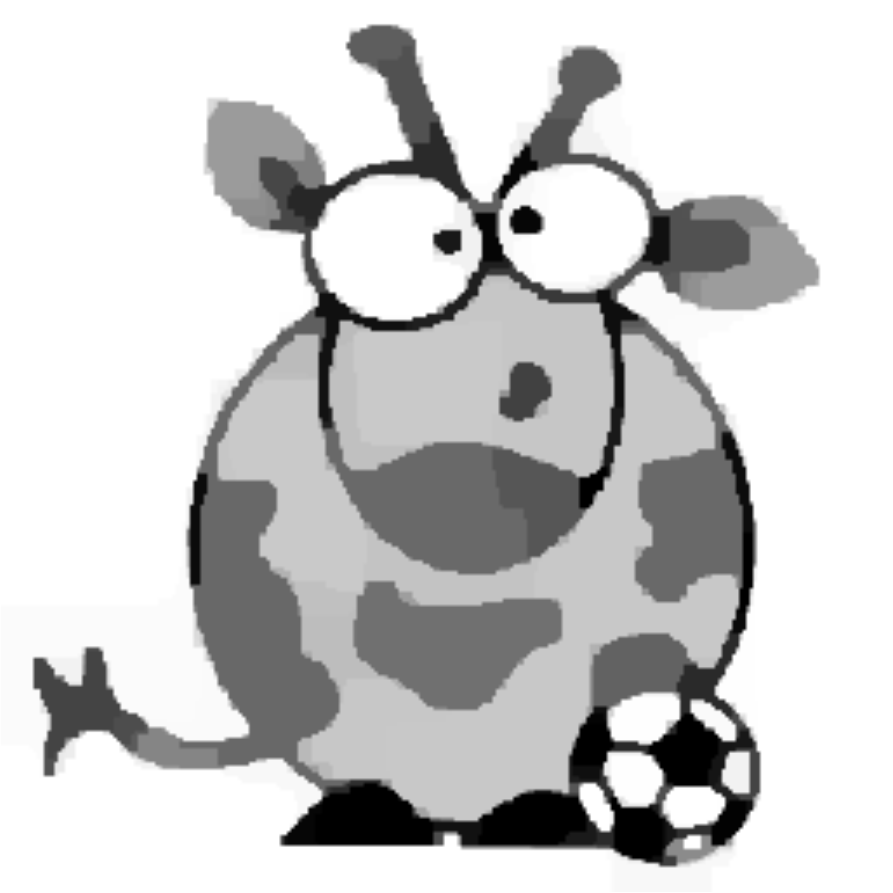}&
\includegraphics[width=0.28\textwidth]{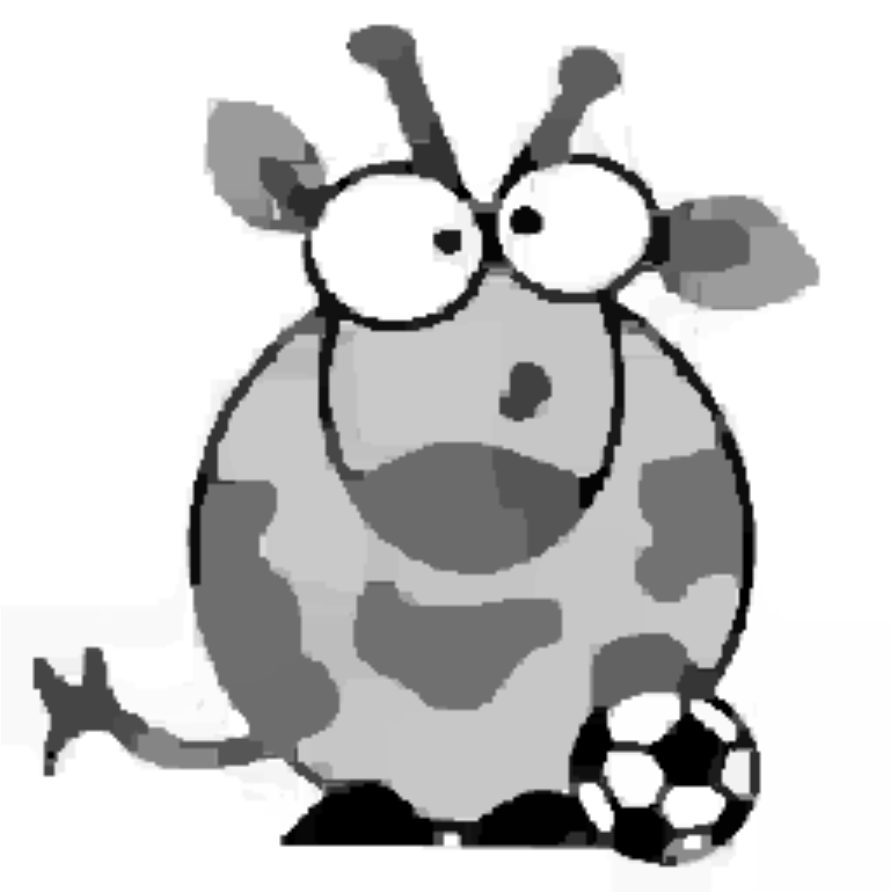}\\
& $PSNR = 16.05$ & $PSNR = 18.19$ & $PSNR = 18.34$\\
\end{tabular}
\caption{Denoising of the {\it Giraffe} cartoon image for different values of the regularization parameter $\alpha$.
First column: TV denoising. Second column: Debiasing on the Bregman manifold.
Third column: Debiasing on the infimal convolution subspace. 
\label{fig:giraffe}}
\end{figure*}

\subsubsection{Natural image}

The debiasing can also be evaluated on natural images such as the {\it Parrot} 
picture.
TV denoising on such images leads to both method bias and model bias.
We expect to reduce the former with the proposed method, while the latter is due to the piecewise constant approximation 
associated with the ROF-model.
The {\it Parrot} image takes values in $[0,1]$ and 
has been artificially corrupted with additive Gaussian noise with zero mean and variance $\sigma=0.05$, 
reaching an initial PSNR of about 13dB. The original image and a noisy realization are 
displayed on the first line of Figure \ref{fig:parrot}.

Analogously to Figure \ref{fig:giraffe}, Figure \ref{fig:parrot} also displays the TV denoising result as well as 
the debiased solutions computed on the Bregman subspace 
or the infimal convolution subspace for different values of the regularization parameter $\alpha$.
On the second line, $\alpha=0.15$ is the optimal 
regularization parameter for TV denoising (in terms of PSNR, see Fig.~\ref{fig:TVres_psnr}-(c)).
However, when performing the debiasing, the remaining noise is strongly amplified.
On the third line, $\alpha=0.3$ is the optimal
regularization parameter for debiasing (in terms of PSNR, see Fig.~\ref{fig:TVres_psnr}-(c)).
On the fourth line $\alpha=0.6$ leads to an oversmoothed solution but the debiasing step still allows to recover the lost contrast.

Note that in the {\it Parrot} case, the optimal result in terms of PSNR is obtained for the TV denoising,
for $\alpha=0.15$.
However, the debiasing obtained with $\alpha=0.3$ visually provides a smoother result on the background, 
while preserving the fine structures such as the stripes around the eye.

Note also that in each case the artifacts of TV denoising such as staircasing remain and even become more apparent. 
This however seems natural as the contrast is increased. 
Since these issues are in fact model bias they are not dealt with by the debiasing method we perform here, 
but could be reduced by an appropriate choice of regularization such as total generalized variation \cite{TGV}.

\subsubsection{Statistical behavior}

For both images, the statistical behavior of the proposed debiasing methods can be evaluated by computing 
the statistical bias $ \E[u^* - \Uhat]$ as well as the variance $\var[u^* - \Uhat]$ between the true image $u^*$ and an estimator $\Uhat$.
In our case this is either the solution of the ROF-model \eqref{ROFmodel} or the corresponding debiased result. 
Figure \ref{fig:_curve} displays the evolution of the estimated statistical bias and standard deviation of the 
TV, Bregman debiased and infimal convolution debiased estimators for the cartoon {\it Giraffe} and natural {\it Parrot} images,
as a function of the regularization parameter $\alpha$.
These curves reflect some interesting behaviors: 
As expected, the residual variance decreases as the regularization parameter increases.
Besides, the variance is always slightly higher for the debiased solutions, 
which reflects the bias-variance compromise that has to be settled.
However, as the regularization parameter increases, the gap between the denoised and debiased variance decreases.
On the other hand, as the regularization parameter grows, the bias increases for each method, 
and it always remains higher for the denoised solutions than for the debiased solutions.
One interesting fact is the behavior of the bias curve for the cartoon {\it Giraffe} image:
for low values of the regularization parameter (up to $\alpha \approx 0.3$), 
the evolution of the bias for the debiased solutions is relatively stable. 
This means that for those values, one can increase the regularization parameter in order to reduce the variance
without introducing too much (at this point, method) bias.
Then, for higher regularization parameters the bias increases in a steeper way, parallel to the evolution of the original bias 
for the TV denoised image. This reflects the evolution of the model bias from this point on, 
when the high regularization parameter provides a model subspace whose elements are too smooth compared to the true image.
For the natural {\it Parrot} image, the model bias occurs even for small values of the regularization parameter, 
because the model manifold provided by the TV regularization does not properly fit the image prior.

\begin{figure*}[th!]
\center
\begin{tabular}{cc}
(a) {\it Giraffe} & (b) {\it Parrot}\\
\includegraphics[width=0.45\textwidth]{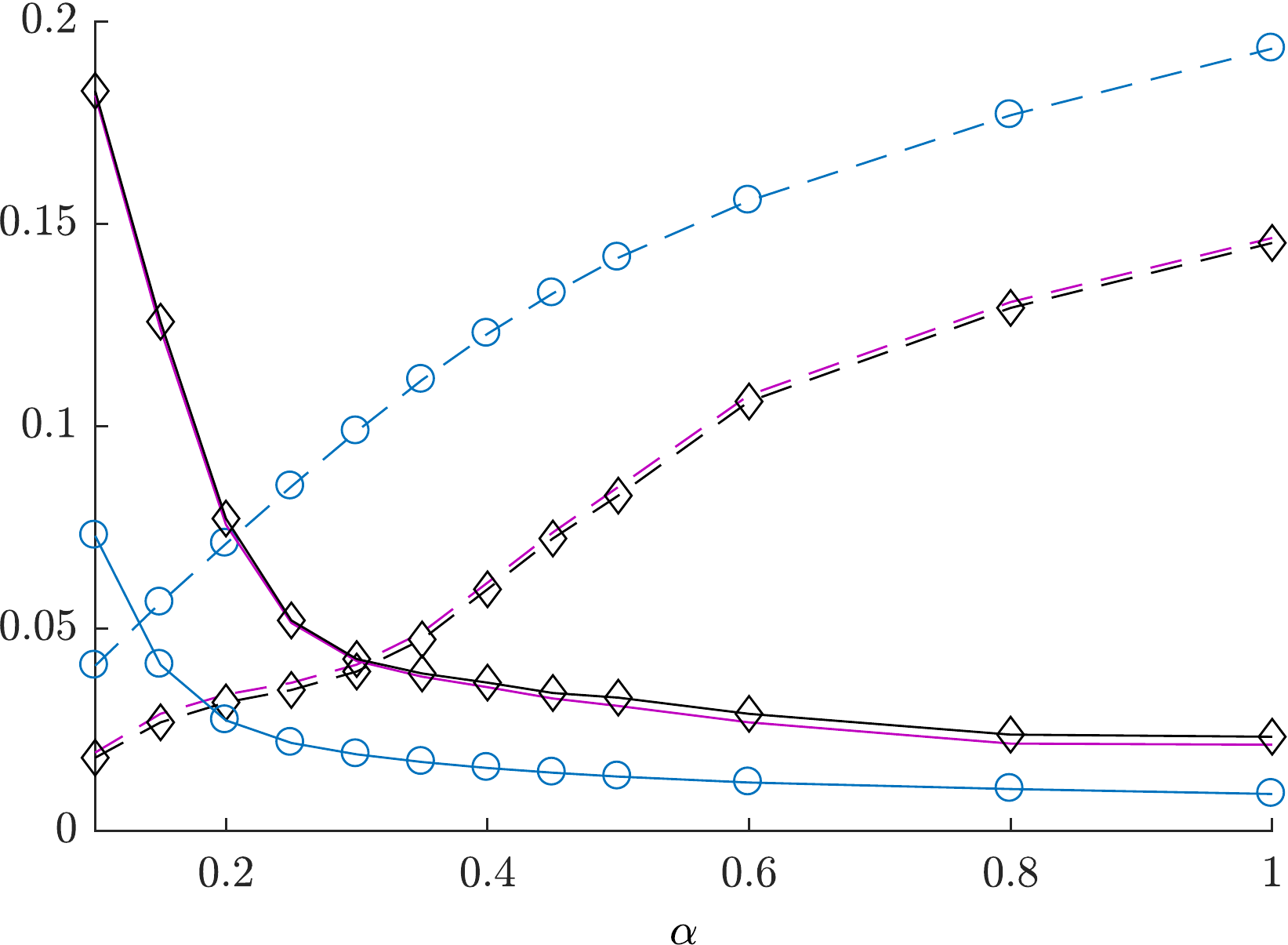}&
\includegraphics[width=0.45\textwidth]{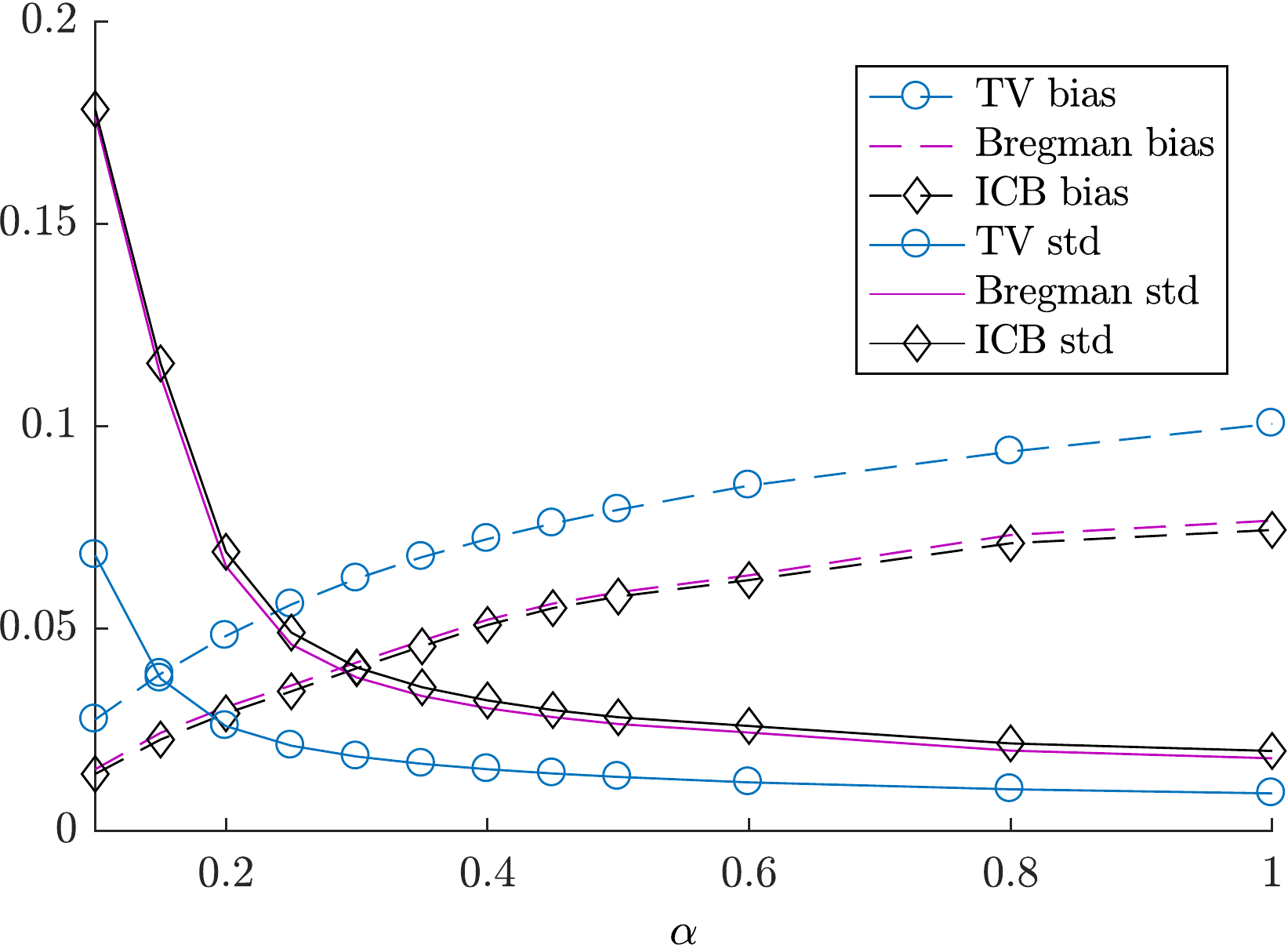}
\end{tabular}
\caption{Evolution of the average residual bias and standard deviation 
computed over 500 noisy realizations of (a) {\it Giraffe} and
(b) {\it Parrot} for TV denoising, Bregman debiasing and infimal convolution debiasing. \label{fig:_curve}}
\end{figure*}

These curves also illustrate the optimal bias-variance balance that can be achieved with or without the debiasing procedure.
Intuitively, one would expect the optimal bias-variance trade-off to be reached 
when the bias and the standard deviation curves intersect each other.
This is indeed confirmed by the PSNR curves from Fig.~\ref{fig:TVres_psnr}-(b) and \ref{fig:TVres_psnr}-(c).
Looking at those intersection points on both curves for the TV denoised solution on the one hand 
and for the debiased solutions on the other hand, 
one can see that the optimal compromise for the debiasing is reached for a higher regularization parameter than for the 
denoising. This offers more denoising performance,
and it leads to a smaller (for the {\it Giraffe} image) or equal (for the {\it Parrot} image) 
average bias and standard deviation.

\subsection{Isotropic TV denoising}

\begin{figure*}[ht!]
\center
\begin{tabular}{cccc}
{\bf Isotropic TV} & {\bf Bregman debiasing} & {\bf ICB debiasing} & {\bf Bregman iterations}\\
\includegraphics[width=0.22\textwidth]{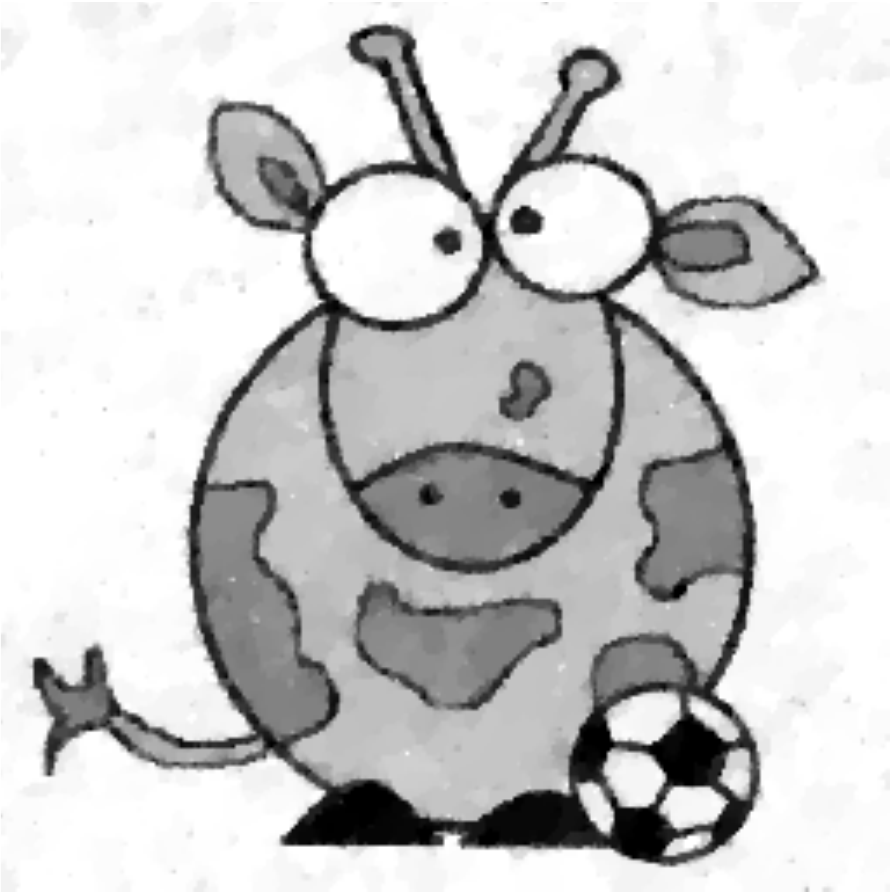}&
\includegraphics[width=0.22\textwidth]{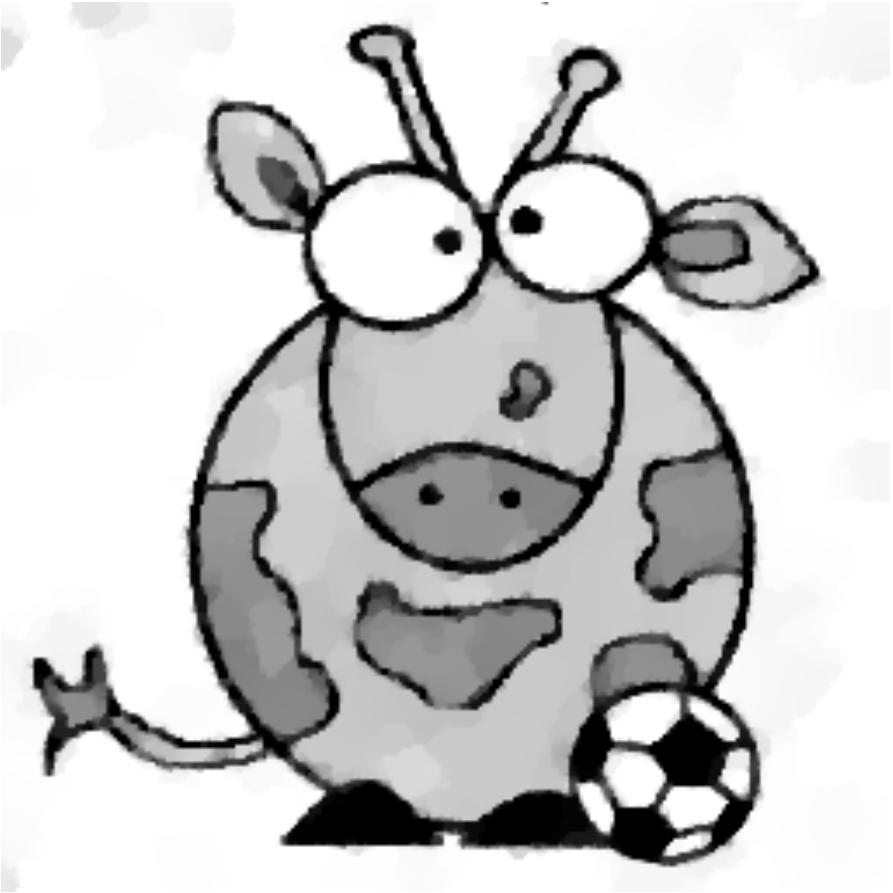}&
\includegraphics[width=0.22\textwidth]{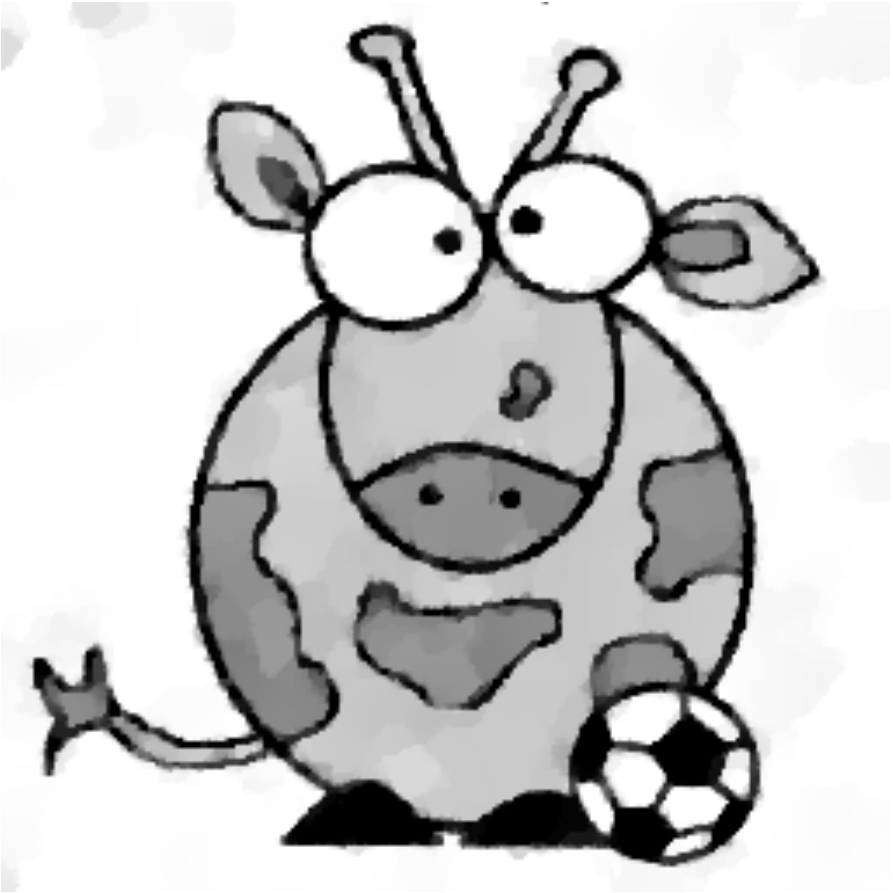}&
\includegraphics[width=0.22\textwidth]{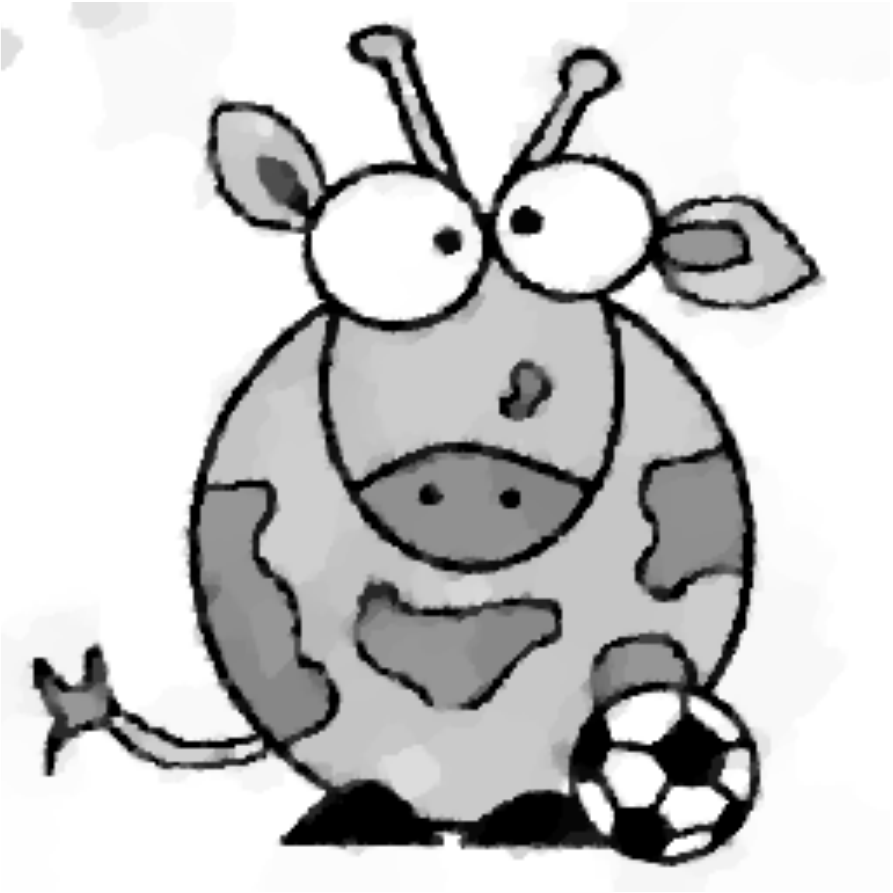}\\
$PSNR = 22.14$ & $PSNR = 22.49$ & $PSNR = 22.58$ & $PSNR = 22.97$\\
\includegraphics[width=0.22\textwidth]{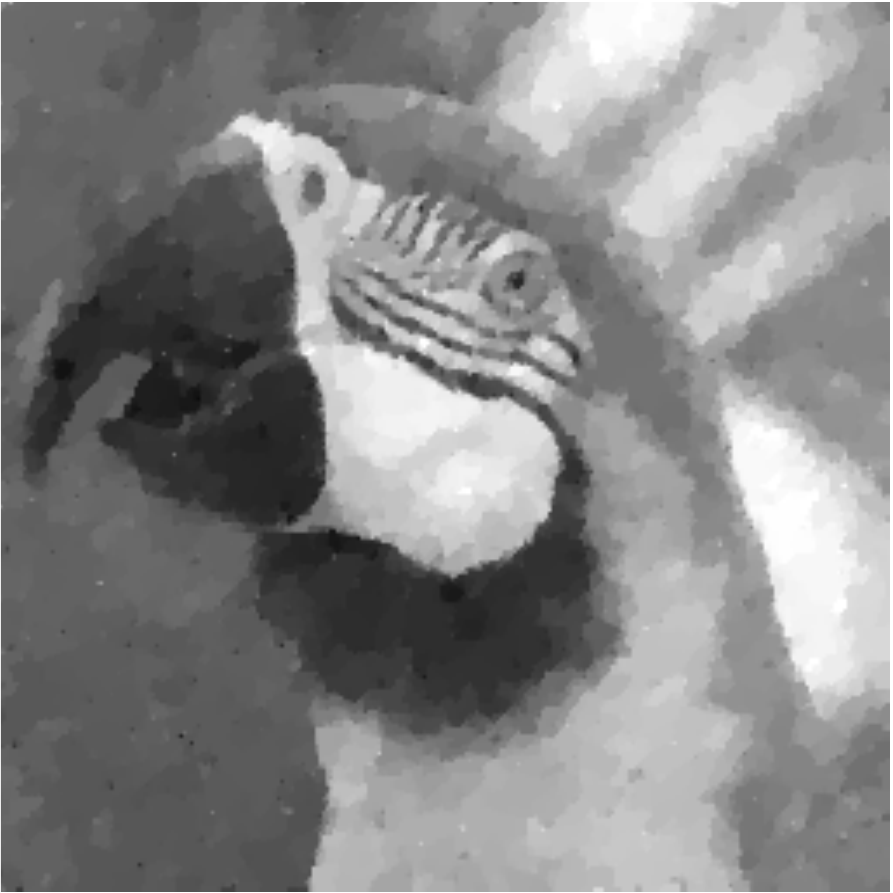}&
\includegraphics[width=0.22\textwidth]{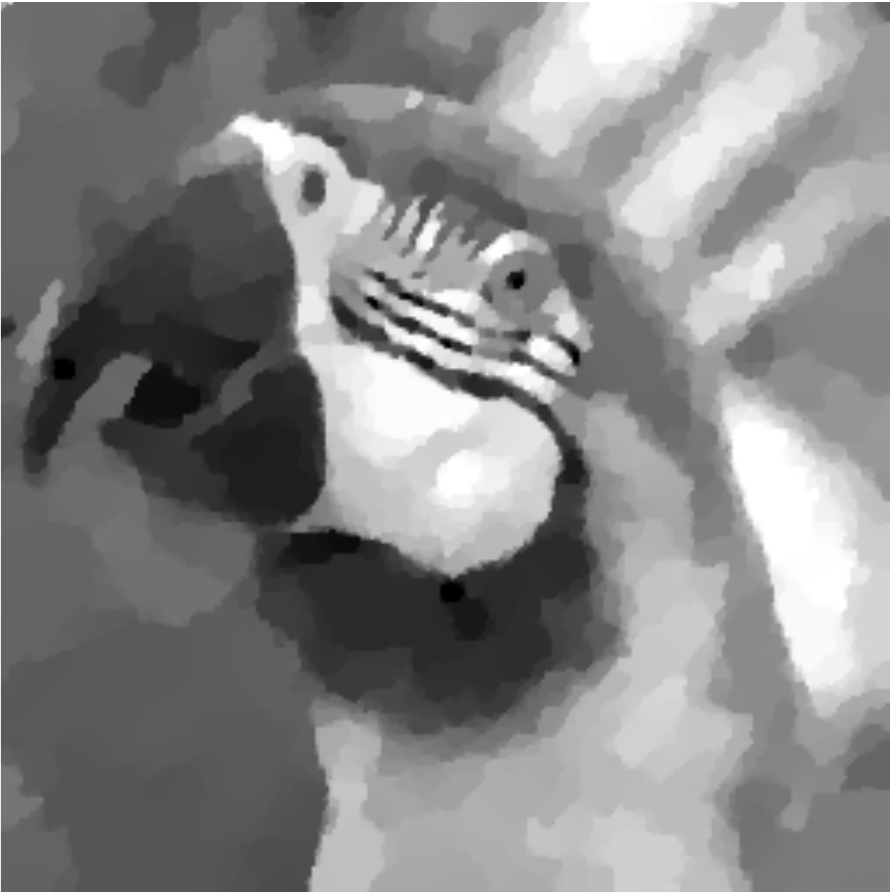}&
\includegraphics[width=0.22\textwidth]{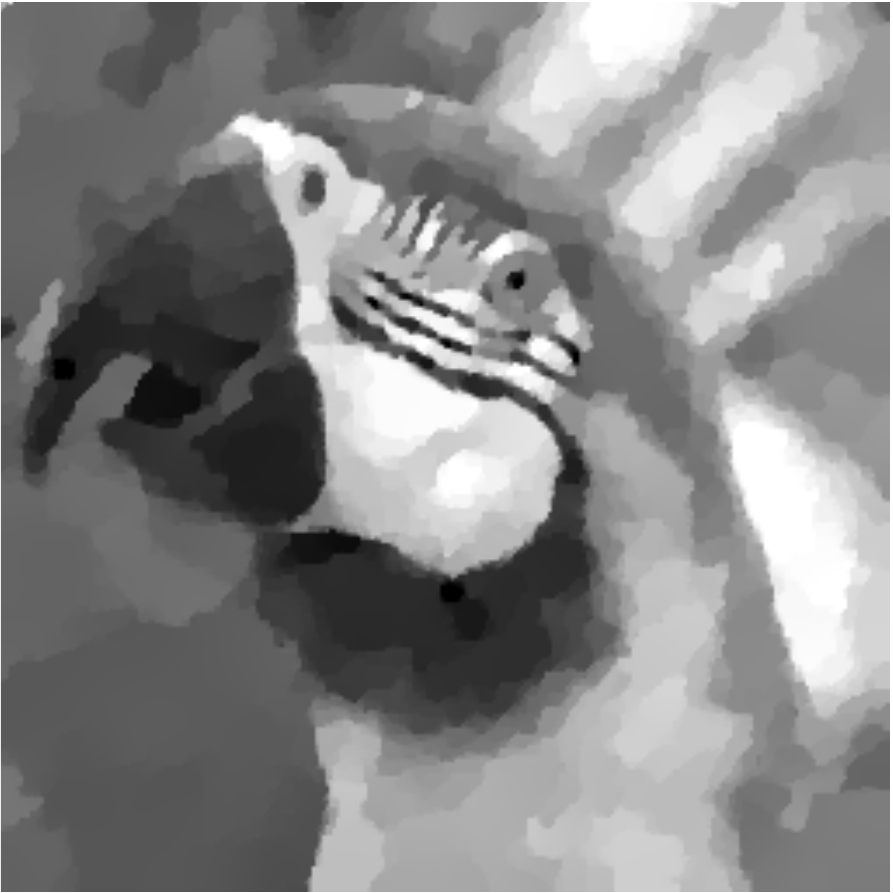}&
\includegraphics[width=0.22\textwidth]{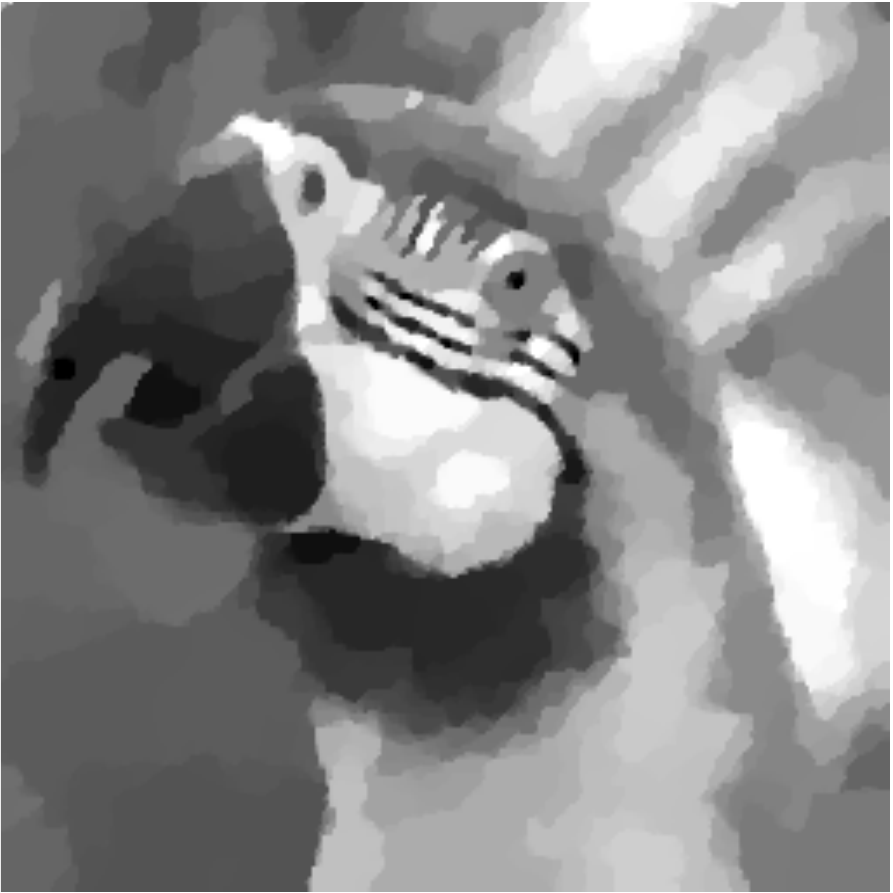}\\
$PSNR = 25.38$ & $PSNR = 24.69$ & $PSNR = 24.76$ & $PSNR = 24.60$\\
\end{tabular}
\caption{Isotropic TV denoising and debiasing of the cartoon {\it Giraffe} and natural {\it Parrot} images,
and comparison to Bregman iterations.
\label{fig:iso}}
\end{figure*}

Finally, we extend the examples presented in \cite{deledalle} with a few numerical results for isotropic TV denoising: 
\begin{align*}
\Vert \Gamma u \Vert_1 = \sum_{i=1}^{m/2} \sqrt{\vert (\Gamma u)_{1,i} \vert^2 + \vert (\Gamma u)_{2,i} \vert^2}.
\end{align*}
We then compare the denoising result to the solutions provided by the two alternative second steps of our debiasing method. Moreover, we also compare them to the result obtained from Bregman iterations.
Figure \ref{fig:iso} displays the optimal (in terms of PSNR) denoising and debiasing 
for the {\it Giraffe} and {\it Parrot} images.
The regularization parameter has been set to $\alpha=0.2$ for the denoising result and to $\alpha=0.3$ for the debiasing.
Similarly to the anisotropic case, the debiasing both visually and quantitatively improves the quality of the 
cartoon {\it Giraffe} image.
For the natural {\it Parrot} image, even though the PSNR is not improved by the debiasing process, 
one can still observe that the higher regularization parameter offers a better denoising of the background, 
while the debiasing guarantees that the fine structures around the eye are preserved with a good contrast.
Besides, the proposed debiasing approach offers similar results to Bregman iterations, displayed in the fourth column. 
However, the interesting aspect of our debiasing approach is that we only apply a two-step procedure, while Bregman iterations
have to be performed iteratively with a sufficiently high number of steps.
Note that our numerical approach to debiasing (see Section \ref{sec:Implementation}) is actually equivalent to performing one 
Bregman iteration with zero initialization of the subgradient, then updating the subgradient
and solving a second Bregman step with a sufficiently high regularization parameter. 

\begin{figure*}[hp!]
\center
\begin{tabular}{m{0.02\textwidth}>{\centering\arraybackslash}m{0.28\textwidth}>{\centering\arraybackslash}
m{0.28\textwidth}>{\centering\arraybackslash}m{0.28\textwidth}}
& & Original image & Noisy image\\
& & \includegraphics[width=0.28\textwidth]{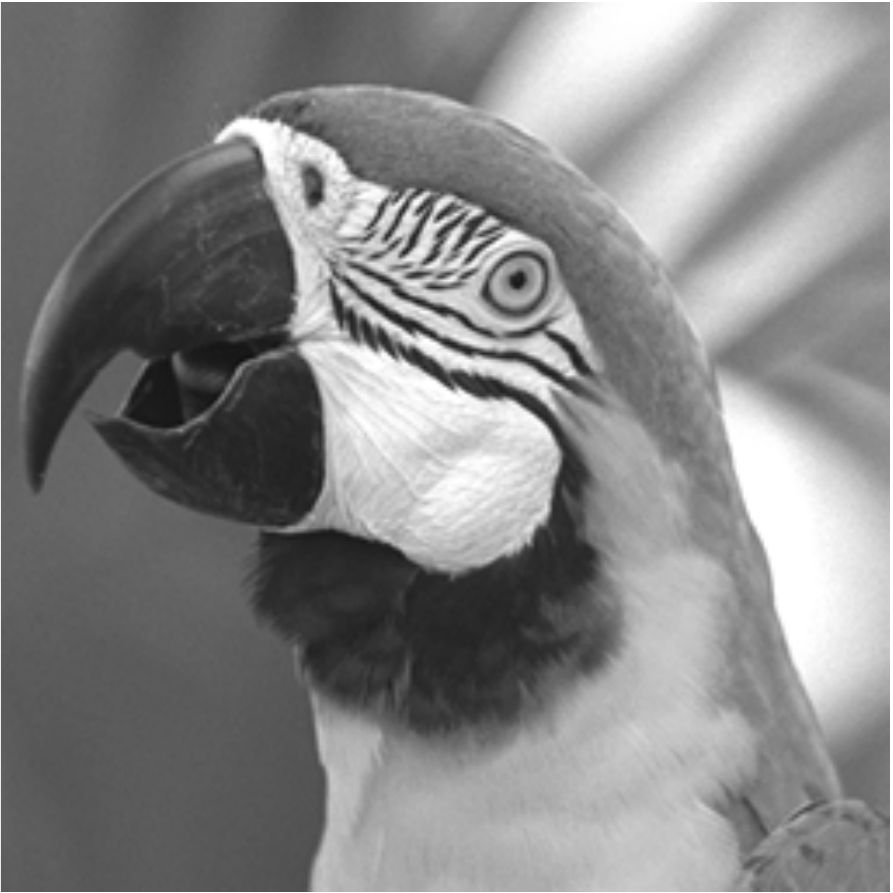}&
\includegraphics[width=0.28\textwidth]{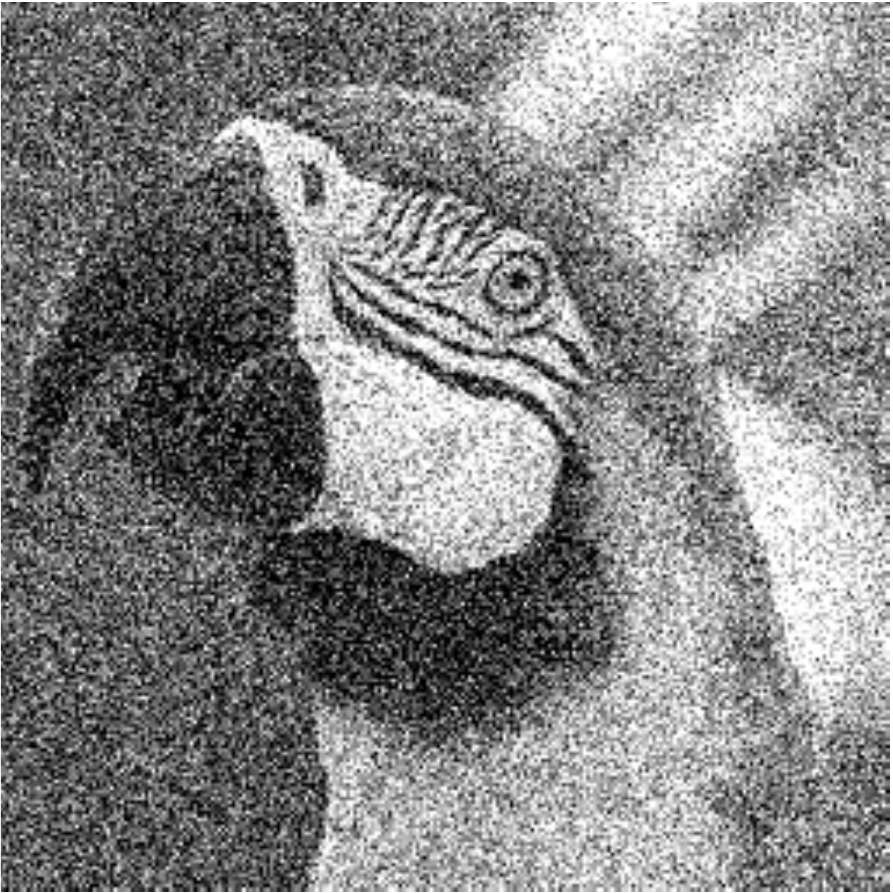}\vspace{1em}\\
& {\bf \large TV denoising} & {\bf \large Bregman debiasing} & {\bf \large ICB debiasing}\\
\rotatebox{90}{$\alpha=0.15$} & 
\includegraphics[width=0.28\textwidth]{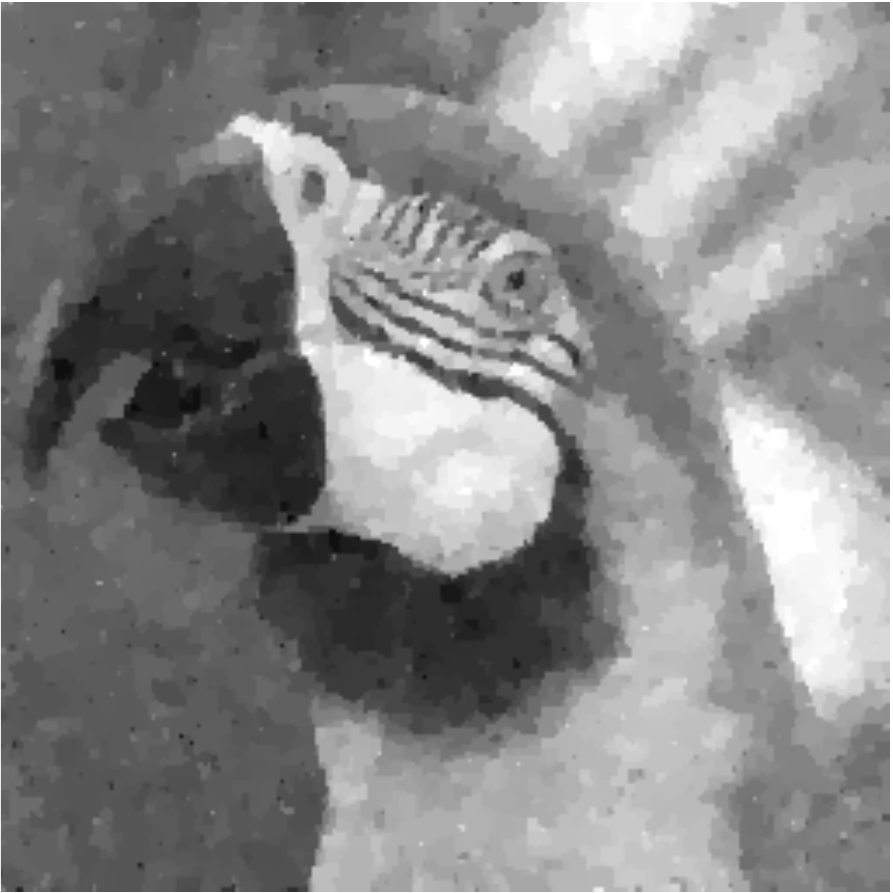}&
\includegraphics[width=0.28\textwidth]{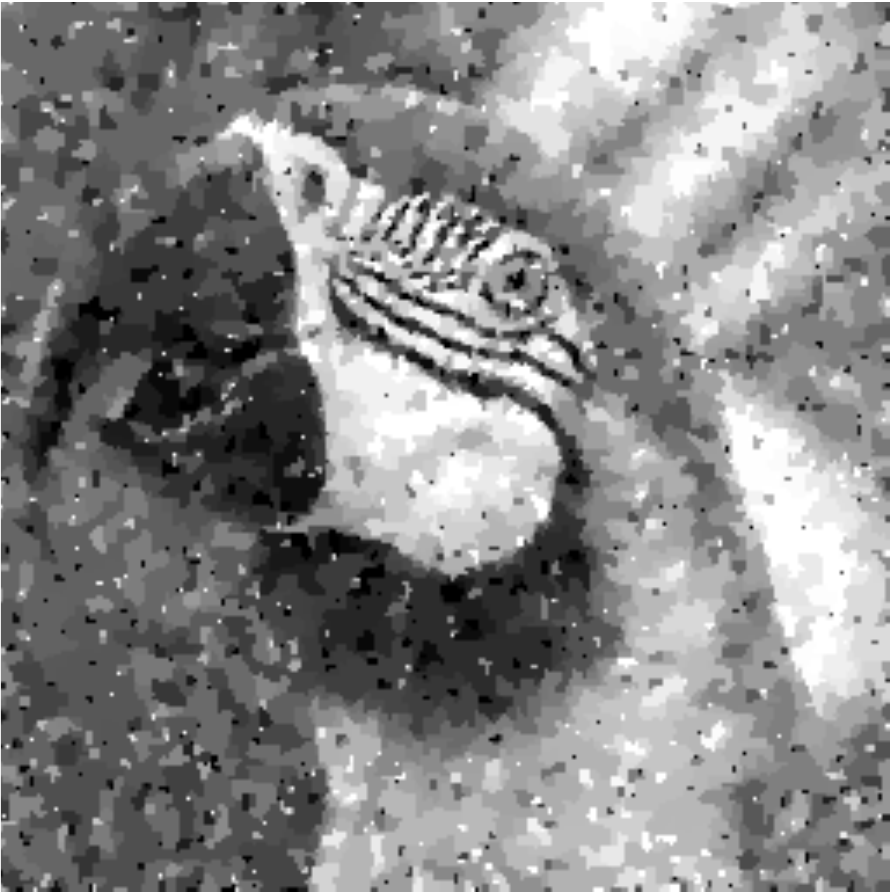}&
\includegraphics[width=0.28\textwidth]{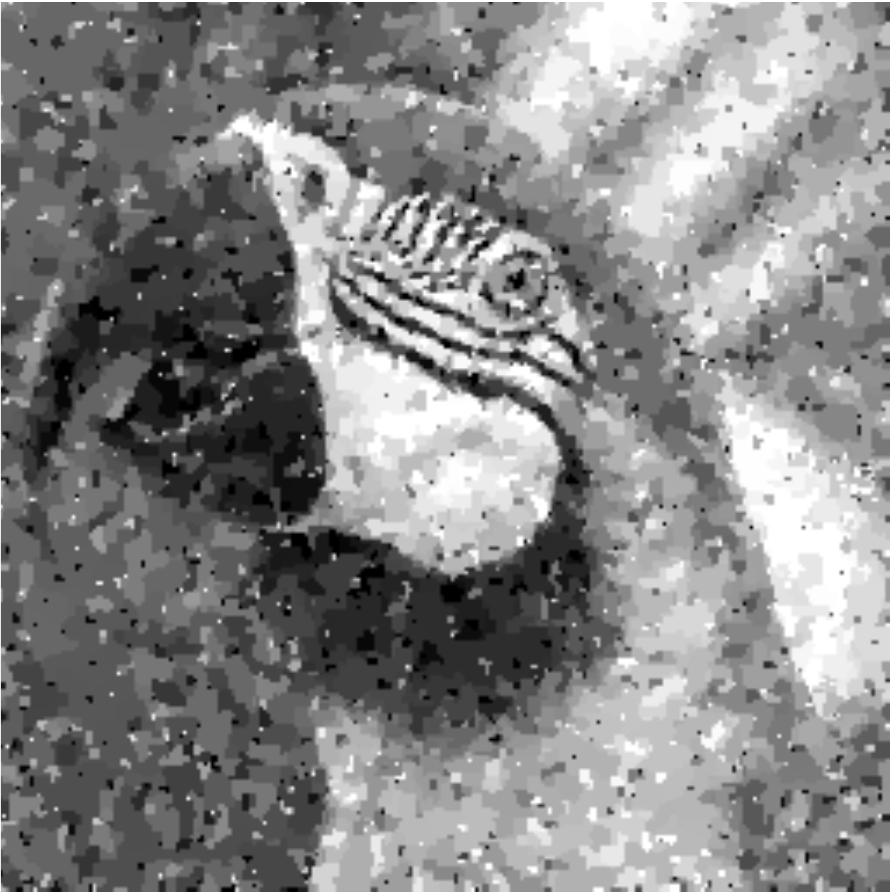}\\
& $PSNR = 25.07$ & $PSNR = 18.82$ & $PSNR = 18.44$\\
\rotatebox{90}{$\alpha=0.3$ (optimal)} & 
\includegraphics[width=0.28\textwidth]{Parrot_TVdenoised_lambda0-3}&
\includegraphics[width=0.28\textwidth]{Parrot_Bregmandebiased_lambda0-3}&
\includegraphics[width=0.28\textwidth]{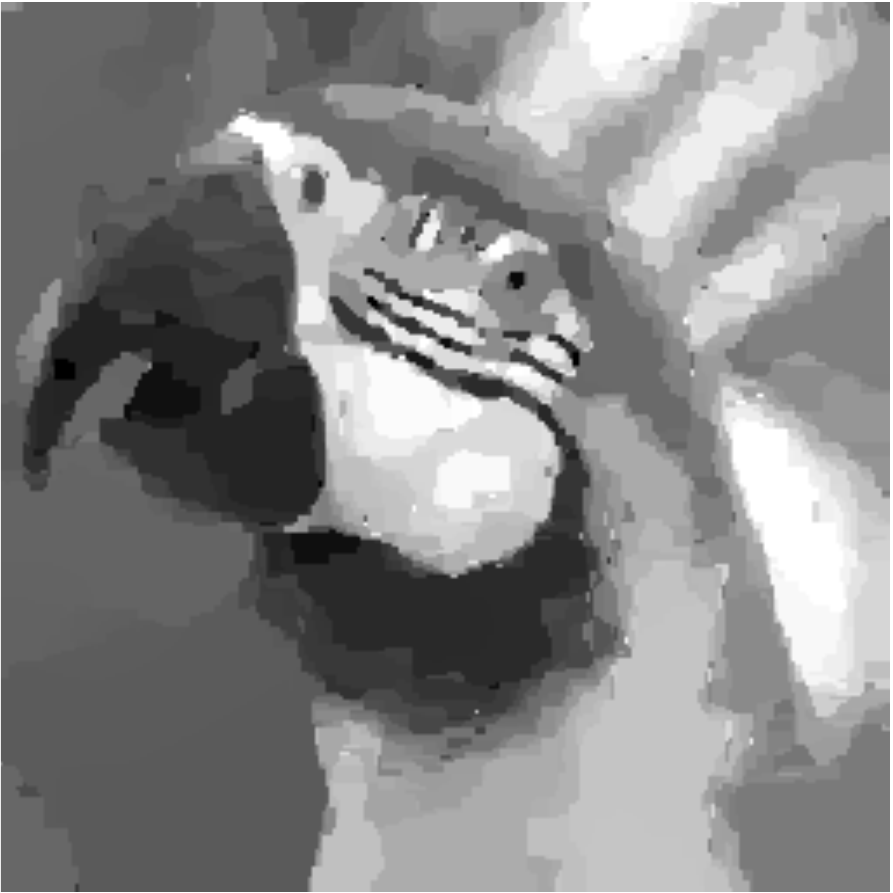}\\
& $PSNR = 23.57$ & $PSNR = 24.19$ & $PSNR = 23.95$\\
\rotatebox{90}{$\alpha=0.6$} & 
\includegraphics[width=0.28\textwidth]{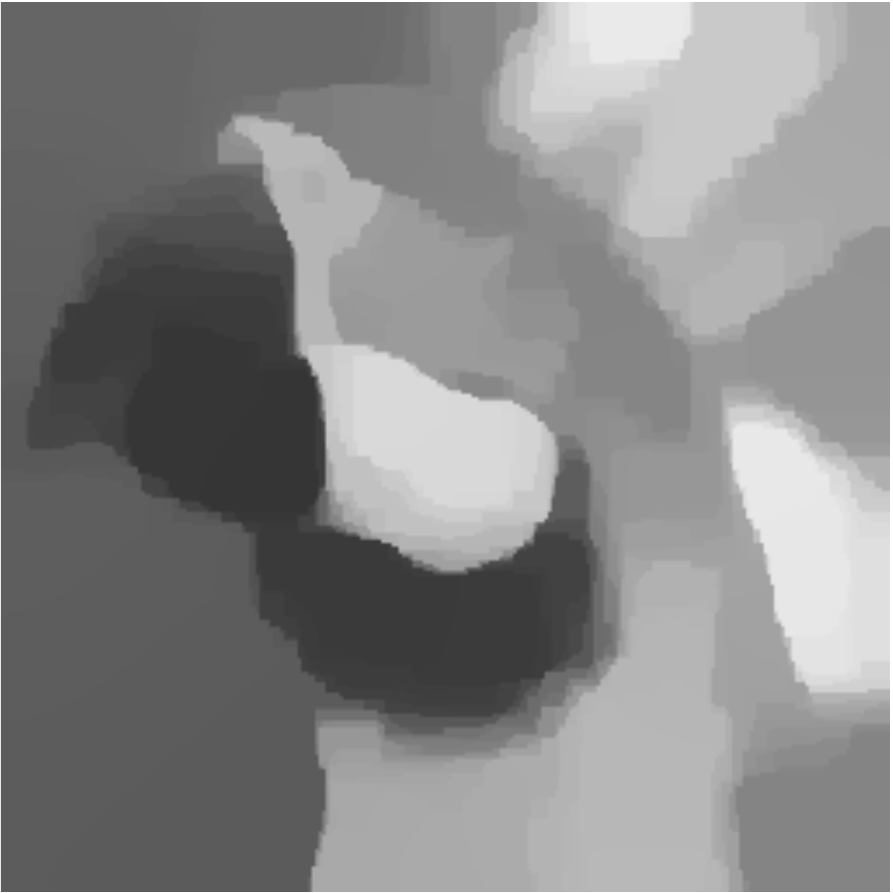}&
\includegraphics[width=0.28\textwidth]{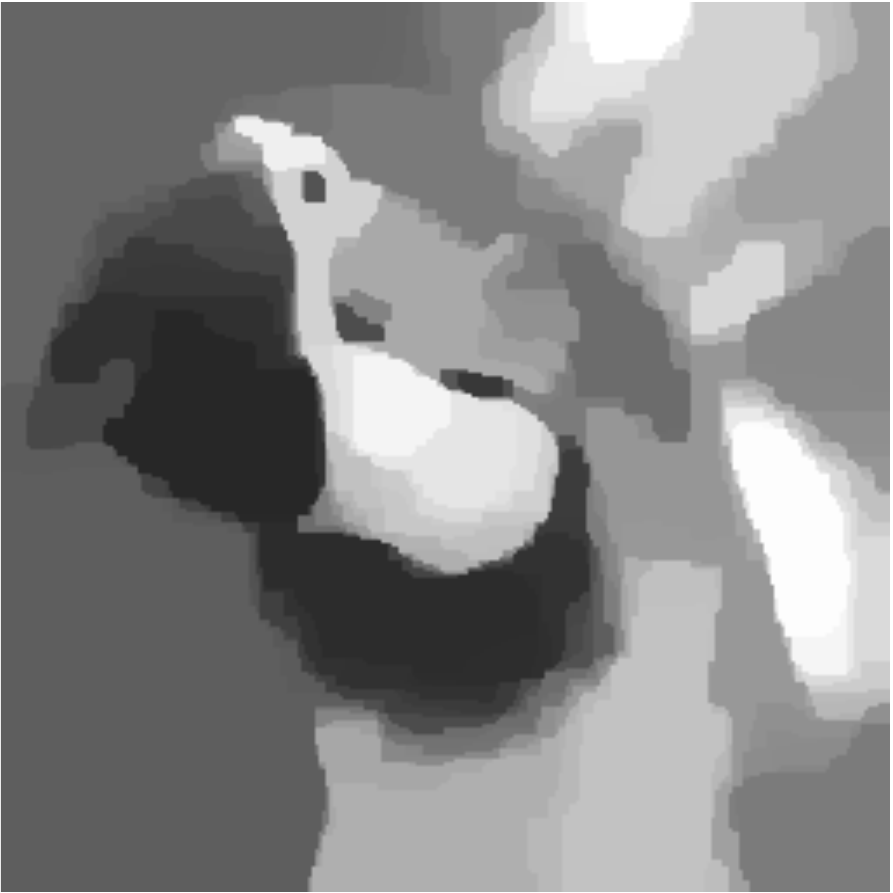}&
\includegraphics[width=0.28\textwidth]{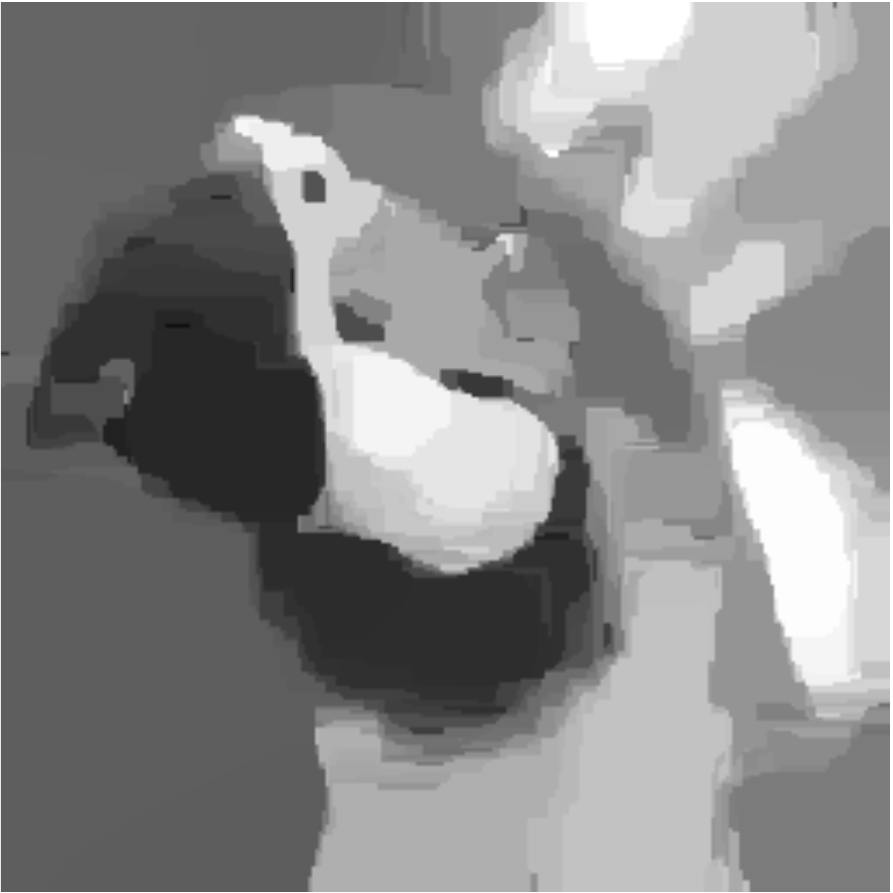}\\
& $PSNR = 21.20$ & $PSNR = 22.29$ & $PSNR = 22.29$
\end{tabular}
\caption{Denoising of the {\it Parrot} image for different values of the regularization parameter $\alpha$.
First column: TV denoising. Second column: Debiasing on the Bregman manifold.
Third column: Debiasing on the infimal convolution subspace. \label{fig:parrot}}
\end{figure*}

\section{Conclusion}

We have introduced two variational debiasing schemes based on Bregman distances and their infimal convolution, 
which are applicable for nonsmooth convex regularizations and generalize known debiasing approaches 
for $\ell^1$ and TV-type regularization. 
Based on a recent axiomatic approach to debiasing by Deledalle and coworkers \cite{deledalle}, which we further generalized towards 
infinite-dimensional problems, we were able to provide a theoretical basis of our debiasing approach and 
work out meaningful model manifolds for variational methods.
Moreover, we were able to relate the approach to Bregman iterations and inverse scale space methods.

From the numerical experiments we observe that the debiasing scheme improves the results for a wide range of 
regularization parameters, which includes the ones providing optimal results. 
Surprisingly, we often find visually optimal choices of the regularization parameters in the range 
where bias and standard deviation of the debiased solution are approximately of the same size. 

Various questions remain open for future studies: 
one might study the generalization to other regularization schemes such as total generalized variation \cite{TGV}, 
spatially adaptive methods that would further reduce the model bias \cite{hintermuller2016analytical}
or nonlocal methods for improved results on natural images. 
As already indicated in the introduction, the method is theoretically not restricted to squared Hilbert-space norms. 
Instead, it can be carried out for any suitable data fidelity $H$ and  we expect it to improve the results.   
From a theoretical, and in particular from a statistical viewpoint, the question is then how to relate the method to actual bias reduction, 
and how to properly motivate and define bias in this setting. 

Another further improvement might be achieved by only approximating the model manifold by tuning the parameter $\gamma$ 
without letting it tend to infinity. 

We acknowledge a very recent and related work on the topic from another perspective, which has been developed in parallel to this work \cite{deledalle2016clear}.
It will be interesting to investigate the connections in future work.

\section{Appendix}
We have included some examples and proofs in the Appendix in order not to interrupt the flow of the paper. 
These are in particular the proof for shrinkage and the calculation of the corresponding derivatives for isotropic and anisotropic shrinkage in Example \ref{ex:aniso_shrinkage},
and the calculation of the infimal convolution of two $\lone$-Bregman distances in Example \ref{ex:iso_TV}.

\subsection{Shrinkage}
\label{app:ex_iso_shrinkage}

Let $f \in \ltwo(\R^d)$ be a vector-valued signal for $d \in \N$.
 Then the solution of the isotropic shrinkage problem 
 \begin{align*}
  \ua(f) \in \argmin_{u \in \lone(\R^d)} \dfrac{1}{2} \|u-f\|_{\ltwo(\R^d)}^2 + \alpha \|u\|_{\lone(\R^d)}
 \end{align*}
 is given by the isotropic soft-thresholding
 \begin{align*}
  [\ua(f)]_i = \begin{cases}
            (1 - \frac{\alpha}{|f_i|}) f_i, & |f_i| > \alpha, \\
            0, & |f_i| \leq \alpha.
           \end{cases}
 \end{align*}
 \begin{proof}
   We first point out, that the objective allows to exploit strong duality.
   Following \cite[Theorem 4.4.3 and Lemma 4.3.1]{Borwein}, strong duality holds if 
   \begin{align*}
    \dom ( \|\cdot \|_{\lone(\R^d)} ) \cap \cont \left(\dfrac{1}{2} \|\cdot-f\|_{\ltwo(\R^d)}^2 \right) \neq \emptyset.
   \end{align*}
   Since the $\lone(\R^d)$-norm has full domain and the $\ltwo(\R^d)$-norm is continuous everywhere, this is trivially fulfilled.
   Hence, by the dual definition of the $\lone(\R^d)$-norm we find 
   \begin{align*}
   &\quad \min_{u \in \lone(\R^d)} \dfrac{1}{2} \|u-f\|_{\ltwo(\R^d)}^2 + \alpha \|u\|_{\lone(\R^d)} \\ 
   &= \min_{u \in \lone(\R^d)} \sup_{\substack{r \in \linf(\R^d) \\ \|r\|_{\linf(\R^d)} \leq \alpha }} \dfrac{1}{2} \|u-f\|_{\ltwo(\R^d)}^2 + \langle r,u \rangle \\
   &= \sup_{\|r\|_{\linf(\R^d)} \leq \alpha } \min_{u \in \lone(\R^d)} \dfrac{1}{2} \|u-f\|_{\ltwo(\R^d)}^2 + \langle r,u \rangle,
   \end{align*}
   where we used strong duality to interchange the infimum and the supremum.
   We can explicitely compute the minimizer for $u$ as $u = f-r$ and hence 
   \begin{align*}
   &\quad \sup_{\|r\|_{\linf(\R^d)} \leq \alpha } \min_{u \in \lone(\R^d)} \dfrac{1}{2} \|u-f\|_{\ltwo(\R^d)}^2 + \langle r,u \rangle \\
   &= \sup_{\|r\|_{\linf(\R^d)} \leq \alpha } - \dfrac{1}{2} \|r\|_{\ltwo(\R^d)}^2 + \langle r,f \rangle. 
   \end{align*}
   This supremum can be computed explicitely pointwise with the corresponding Lagrangian 
   \begin{align*}
   \mathcal{L}(r_i, \lambda) = - \dfrac{1}{2} |r_i|^2 + r_i \cdot f_i + \lambda (|r_i|^2 - \alpha^2)
   \end{align*}
   with $\lambda \leq 0$. 
   Note that both the objective function and the constraints are continuously differentiable and that Slater's condition holds. 
   Optimality with respect to $r_i$ yields
   \begin{align*}
   f_i - r_i + 2 \lambda r_i = 0
   \end{align*}
   and hence 
   \begin{align*}
   r_i = \dfrac{f_i}{1- 2 \lambda}.
   \end{align*}
   We distinguish two cases:\\ 
   If $|r_i| = \alpha$, then $\alpha (1- 2\lambda) = |f_i|$ and
   \begin{align*}
     u_i = f_i - r_i = f_i - \frac{f_i}{1-2\lambda} = (1 - \frac{\alpha}{|f_i|}) f_i.
   \end{align*}
   The nonpositivity of $\lambda$ implies that $|f_i| \geq \alpha$.
   In case $|r_i| < \alpha$, we obtain that $\lambda = 0$ and hence $r_i = f_i$ and $u_i = 0$ when $|f_i| < \alpha$. 
   Note that since $f \in \ltwo(\R^d)$ there exists a finite $N$ such that $|f_i| \leq \alpha$ for all $i > N$. 
   Hence trivially $\ua(f) \in \lone(\R^d)$ as $\sum_{i \in \mathbb{N}} \vert \left[\ua(f)\right]_i \vert$ is a finite sum. 
   This yields the assertion. \QED
 \end{proof}
 
 {\em Remark:}
   For $d = 1$ and a square-summable sequence $f \in \ltwo$ we immediately obtain the anisotropic case: 
   The solution to
   \begin{align}
   \ua \in \arg \min_{u \in \lone} \dfrac{1}{2} \|u-f\|_{\ltwo}^2 + \alpha \|u\|_{\lone}
   \end{align}
   for $\alpha > 0$ is given by 
   \begin{align*}
   [\ua(f)]_i = \begin{cases}
		  f_i - \alpha ~ \sign(f_i), & |f_i| \geq \alpha \\
		  0, & |f_i| < \alpha.
	       \end{cases}
   \end{align*}
 
 {\em Directional derivative:}
 The computation of the directional derivative requires a little more work. 
 At first, let us compute the directional derivative of the function $F \colon \R^d\backslash\{0\} \to \R$, $x \mapsto \frac{1}{|x|}$ into the direction $g \in \R^d$. 
 We define $G \colon \R^d\backslash\{0\} \to \R$, $x \mapsto \frac{1}{|x|^2}$ and calculate
 \begin{align*}
  \der G(x;g) 
  &= \lim_{t \to 0^+} \frac{G(x+tg) - G(x)}{t} \\
  &= \lim_{t \to 0^+} \frac{1}{t} \left( \frac{1}{|x+tg|^2} - \frac{1}{|x|^2} \right) \\
  &= \lim_{t \to 0^+} \frac{1}{t} \left( \frac{|x|^2 - |x + tg|^2}{|x|^2 |x + tg|^2} \right) \\
  &= \lim_{t \to 0^+} \frac{1}{t} \left( \frac{- 2 t x \cdot g - t^2 |g|^2}{|x|^2 |x + tg|^2} \right) \\
  &= - 2 \frac{x \cdot g}{|x|^4}.
 \end{align*}
 Then by the chain rule we obtain
 \begin{align*}
  \der F(x;g) 
  &= \der \sqrt{G}(x;g) = \frac{\der G(x;g)}{2 \sqrt{G(x)}} \\
  &= -2 \frac{x \cdot g}{|x|^4} \frac{|x|}{2} = - \frac{x \cdot g}{|x|^3}.
 \end{align*}
 Let us further define the projection of a vector $x \in \R^d$ onto another vector $y \in \R^d\backslash\{0\}$ as 
 \begin{align*}
  \Pi_y (x) = \frac{y \cdot x}{|y|^2} y.
 \end{align*}

 We now have to compute 
 \begin{align*}
  [\der \ua (f;g)]_i = \lim_{t \to 0^+} \dfrac{1}{t} \big( [\ua (f + tg) ]_i - [\ua (f) ]_i \big)
 \end{align*}
 and we can distinguish four cases:\\ 
 Let at first $|f_i| > \alpha$. 
 Then for $t$ small enough we have $|f_i + t g_i| > \alpha$ and hence 
 \begin{align*}
  \ &\lim_{t \to 0^+} \dfrac{1}{t} \big( [\ua (f + tg) ]_i - [\ua (f) ]_i \big) \\
  = &\lim_{t \to 0^+} \dfrac{1}{t} \left( \left( 1 - \frac{\alpha}{|f_i + t g_i|} \right) (f_i + t g_i)\right.\\
   & \hspace{11.7em} \left.- \left( 1 - \frac{\alpha}{|f_i|} \right) f_i \right) \\
  = &\lim_{t \to 0^+} \dfrac{1}{t} \left( f_i + t g_i - \alpha \frac{f_i + t g_i}{|f_i + t g_i|} - f_i + \alpha \frac{f_i}{|f_i|} \right) \\
  = &\lim_{t \to 0^+} \dfrac{1}{t} \left( t g_i - \frac{\alpha t g_i}{|f_i + t g_i|} \right.\\
  & \hspace{7.6em} \left. - \alpha f_i \left( \frac{1}{|f_i + t g_i|} - \frac{1}{|f_i|} \right) \right) \\
  = &\ g_i - \alpha \frac{g_i}{|f_i|} + \alpha f_i \frac{f_i \cdot g_i}{|f_i|^3} \\
  = &\ g_i + \frac{\alpha}{|f_i|} \left( \Pi_{f_i}(g_i) - g_i \right).
 \end{align*}
 For $|f_i| < \alpha$ and $t$ small enough we easily find $|f_i + t g_i| < \alpha$ and hence
 \begin{align*}
  [\der \ua (f;g)]_i = 0.
 \end{align*}
 In case $|f_i| = \alpha$ we need to distinguish whether $|f_i + t g_i| > \alpha$ or $|f_i + t g_i| \leq \alpha$ for arbitrarily small $t$. 
 We hence compute 
 \begin{alignat*}{3}
  &\quad && |f_i + t g_i| &&> \alpha \\
  &\Leftrightarrow \ &&|f_i + t g_i|^2 &&> \alpha^2 \\
  &\Leftrightarrow \ &&|f_i|^2 + 2 t f_i \cdot g_i + t^2 |g_i|^2 &&> \alpha^2 \\
  &\Leftrightarrow \ && 2 f_i \cdot g_i + t |g_i|^2 &&> 0, 
 \end{alignat*}
 which for arbitrarily small $t$ is true only if $f_i \cdot g_i \geq 0$. 
 Analogously we find that $|f_i + t g_i| < \alpha$ for small $t$ is only true if $f_i \cdot g_i < 0$.\\
 Hence let now $|f_i| = \alpha$ and $f_i \cdot g_i \geq 0$. 
 Then we obtain 
 \begin{align*}
  [\der \ua &(f;g)]_i 
  = \lim_{t \to 0^+} \dfrac{1}{t} \big( [\ua (f + tg) ]_i \big) \\
  &= \lim_{t \to 0^+} \dfrac{1}{t} \left( \left( 1- \frac{\alpha}{|f_i + t g_i|} \right) (f_i + t g_i) \right).
 \end{align*}
 Using $\alpha = |f_i|$, we find
 \begin{align*}
  & \lim_{t \to 0^+} \dfrac{|f_i|f_i}{t} \left( \frac{1}{|f_i|} \text{ - } \frac{1}{|f_i + t g_i|} \right) + g_i \text{ - } \frac{|f_i| g_i}{|f_i + t g_i|} \\
  &= |f_i|f_i \frac{f_i \cdot g_i}{|f_i|^3} \\
  &= \Pi_{f_i}(g_i).
 \end{align*}
 In the last case $|f_i| = \alpha$ and $f_i \cdot g_i < 0$, we find 
 \begin{align*}
  [\der \ua (f;g)]_i = \lim_{t \to 0^+} \dfrac{1}{t} \big( [\ua (f + tg) ]_i \big) = 0.
 \end{align*}
 Summing up we have
\begin{align*}
[\der \ua(f;&g)]_i \\ 
&= \left\{
\begin{array}{lcr}
\multicolumn{2}{l}{g_i + \frac{\alpha}{|f_i|} \left( \Pi_{f_i}(g_i) - g_i \right),\qquad } & |f_i|> \alpha,\\
0, & & |f_i|< \alpha,\\
\Pi_{f_i}(g_i),\qquad & \multicolumn{2}{r}{\quad |f_i|= \alpha,\ f_i \cdot g_i > 0,} \\
0, & \multicolumn{2}{r}{|f_i|= \alpha,\ f_i \cdot g_i \leq 0.} 
\end{array}\right.
\end{align*}
It remains to show that 
\begin{align*}
\Big\| \frac{ \ua (f + tg) - \ua (f) }{t} - \der \ua (f;g) \Big\|_{\lone(\R^d)} \to 0
\end{align*}
for $t \to 0^+$.
Again, since $f \in \ltwo(\R^d)$, there exists $N \in \N$ such that $|f_i| < \alpha$ and hence $[\der \ua (f;g)]_i = 0$  for all $i > N$. 
The difference quotient as well vanishes for all $i > N$, hence  the above $\lone$ norm is a finite sum and thus we  trivially obtain convergence in $\lone(\R^d)$. 

{\em Remark:}
For $d=1$ and $f \in \ltwo$ we obtain the anisotropic result: 
\begin{align*}
 [\der \ua (f;g)]_i& \\ = &\begin{cases}
                       g_i, & |f_i| > \alpha \\
                       0, & |f_i| < \alpha \\
                       g_i, & |f_i| = \alpha, \sign(f_i) = \sign(g_i) \\
                       0, & |f_i| = \alpha, \sign(f_i) \neq \sign(g_i),
                      \end{cases}
\end{align*}
where we mention that here $\Pi_{f_i}(g_i) = g_i$. 

{\em Model manifold:}
The corresponding (isotropic) model manifold is given by 
\begin{align*}
 u \in \MG \Leftrightarrow u_i = \begin{cases}
                                  v \in \R^d, & |f_i| > \alpha, \\
                                  0, & |f_i| < \alpha, \\
                                  \lambda f_i,\ \lambda \geq 0, & |f_i| = \alpha.
                                 \end{cases}
\end{align*}
Analogously to the anisotropic case discussed in Example \ref{ex:aniso_shrinkage}, the model manifold allows for arbitrary elements, 
here even including the direction, if the magnitude $|f_i|$ of the signal is strictly above the threshold parameter $\alpha$. 
As already discussed in Example \ref{ex:aniso_shrinkage}, $|f_i| = \alpha$ is the odd case of the three, 
since in contrast to $|f_i| > \alpha$ it only allows for changes into the direction of the signal $f_i$. 
If we exclude that case, we again find a linear derivative, hence a G\^ateaux derivative and even a Fr\'echet derivative.
Accordingly the isotropic shrinkage is the immediate generalization of the anisotropic shrinkage, which we can find as a special case for $d = 1$. 

Summing up, the debiasing procedure on this manifold again yields the solution of hard thresholding: 
\begin{align*}
 [\hat{u}(f)]_i = \begin{cases}
                f_i, & |f_i| \geq \alpha, \\
                0, & |f_i|< \alpha.
               \end{cases}
\end{align*}
Note that we again maintain the signal directly on the threshold.

\subsection{Infimal convolution of $\lone$ Bregman distances}
\label{app:infconv}
\begin{theorem}
\label{thm:smaller}
 Let $\Gamma \colon \ltwo(\R^n) \to \lone(\R^m)$ be linear and bounded and $J(u) = \|\Gamma u \|_{\lone(\R^m)}$ for $m,n \in \N$.
 Let further $\qa \in \partial \| \cdot \|_{\lone(\R^m)} (\Gamma \ua)$ such that $\pa = \Gamma^* \qa$.
 Then 
 \begin{align*}
  \ICBlisoq(\Gamma u, \Gamma \ua) \leq \ICB(u, \ua).
 \end{align*}
\end{theorem}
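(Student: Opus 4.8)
The plan is to unfold both infimal convolutions into a common expression and then observe that the quantity on the left is an infimum over a strictly larger set of competitors, so it can only be the smaller of the two. First I would use the absolute one-homogeneity of $J$ to write the two Bregman distances in their reduced form, namely $D_J^{\pa}(u-z,\ua) = J(u-z) - \langle \pa, u-z\rangle$ and $D_J^{-\pa}(z,-\ua) = J(z) + \langle \pa, z\rangle$, where the latter uses $-\pa \in \partial J(-\ua)$ (valid since $\partial J(-\ua) = -\partial J(\ua)$ for such $J$). Adding these and simplifying exactly as in the computation carried out in the proof of Theorem~\ref{subspacelemma}, I obtain
\begin{align*}
\ICB(u,\ua) = \inf_{z \in \ltwo(\R^n)} J(u-z) + J(z) - \langle \pa, u - 2z\rangle.
\end{align*}

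Next I would substitute $J(\cdot) = \|\Gamma\, \cdot\,\|_{\lone(\R^m)}$ together with $\pa = \Gamma^*\qa$, and use the adjoint relation $\langle \Gamma^*\qa,\, u - 2z\rangle = \langle \qa,\, \Gamma u - 2\Gamma z\rangle$. Writing $\Phi(w) := \|\Gamma u - w\|_{\lone(\R^m)} + \|w\|_{\lone(\R^m)} - \langle \qa,\, \Gamma u - 2w\rangle$ for $w \in \lone(\R^m)$, this turns the previous display into
\begin{align*}
\ICB(u,\ua) = \inf_{z \in \ltwo(\R^n)} \Phi(\Gamma z).
\end{align*}
The very same unfolding applied to the two $\lone(\R^m)$-Bregman distances (using $\qa \in \partial\|\cdot\|_{\lone(\R^m)}(\Gamma\ua)$ and $-\qa \in \partial\|\cdot\|_{\lone(\R^m)}(-\Gamma\ua)$) yields
\begin{align*}
\ICBlisoq(\Gamma u, \Gamma \ua) = \inf_{w \in \lone(\R^m)} \Phi(w).
\end{align*}

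The decisive point is then purely set-theoretic: since $\Gamma$ maps $\ltwo(\R^n)$ boundedly into $\lone(\R^m)$, its range satisfies $\{\Gamma z : z \in \ltwo(\R^n)\} \subseteq \lone(\R^m)$. Hence the competitors $\Phi(\Gamma z)$ appearing in $\ICB(u,\ua)$ form a subfamily of the competitors $\Phi(w)$ appearing in $\ICBlisoq(\Gamma u, \Gamma \ua)$, and taking the infimum over the larger set can only decrease the value:
\begin{align*}
\ICBlisoq(\Gamma u, \Gamma \ua) = \inf_{w \in \lone(\R^m)} \Phi(w) \leq \inf_{z \in \ltwo(\R^n)} \Phi(\Gamma z) = \ICB(u,\ua),
\end{align*}
which is the claim.

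I do not anticipate a genuine obstacle; the argument is essentially bookkeeping around the infimal-convolution definition. The two points that demand a little care are the correct sign in the second Bregman distance $D_J^{-\pa}(\cdot,-\ua)$ (where the minus sign flips to a plus and is exactly what produces the $-2z$ term), and the observation that no surjectivity of $\Gamma$ is required — only that its range lands inside $\lone(\R^m)$, which is precisely what converts the reparametrized infimum for $\ICB$ into a restriction of the full infimum defining $\ICBlisoq$.
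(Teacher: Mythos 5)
Your proposal is correct and follows essentially the same route as the paper's proof: unfold both infimal convolutions using the reduced form of the Bregman distances for one-homogeneous $J$, pull $\qa$ through the adjoint, and observe that $\ICB(u,\ua)$ is an infimum over the range of $\Gamma$, a subset of $\lone(\R^m)$, so it dominates the infimum over all of $\lone(\R^m)$. Your closing remark that only range containment (not surjectivity) is needed matches the paper's note that equality holds when $\Gamma$ is surjective.
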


\begin{proof}
\begin{align*}
 & \quad \ICB(u, \ua) \\
 &= \inf_{z \in \ltwo(\R^n)} ~ D_J^{\pa}(u-z,\ua) + D_J^{-\pa}(z,-\ua) \\
 &= \inf_{z \in \ltwo(\R^n)} ~ \| \Gamma (u-z) \|_{\lone(\R^m)} - \langle \pa,u-z \rangle \\ 
   &\quad + \|\Gamma z\|_{\lone(\R^m)} + \langle \pa,z \rangle \\
 &= \inf_{z \in \ltwo(\R^n)} ~ \| \Gamma (u-z) \|_{\lone(\R^m)} - \langle \qa,\Gamma(u-z) \rangle \\
   &\quad + \|\Gamma z\|_{\lone(\R^m)} + \langle \qa,\Gamma z \rangle \\
 &= \inf_{\Gamma z \in \lone(\R^m)} \| \Gamma (u-z) \|_{\lone(\R^m)} - \langle \qa,\Gamma(u-z) \rangle \\
   &\quad + \|\Gamma z\|_{\lone(\R^m)} + \langle \qa,\Gamma z \rangle \\
 &\geq \inf_{w \in \lone(\R^m)} ~ \| \Gamma u - w \|_{\lone(\R^m)} - \langle \qa,\Gamma u -w \rangle \\
   &\quad + \|w\|_{\lone(\R^m)} + \langle \qa,w \rangle \\
 &= \inf_{w \in \lone(\R^m)} ~ D_{\lone(\R^m)}^{\qa}(\Gamma u - w, \Gamma \ua) \\ 
   &\quad + D_{\lone(\R^m)}^{-\qa}(w, -\Gamma \ua) \\
 &= \ICBlisoq(\Gamma u, \Gamma \ua).
\end{align*}
\QED
\end{proof}
Note that we get equality for surjective $\Gamma$ in Theorem \ref{thm:smaller}.

\begin{theorem}
\label{thm:infconv_l1}
 Let $v,u \in \ell^1(\R^m)$ and $q \in \partial \|v\|_{\lone(\R^m)}$. Then
\begin{align*}
 \textnormal{ICB}_{\lone(\R^m)}^q(u,v) = \sum_{i \in \N} G(u_i, q_i)
\end{align*}
with $G \colon \R^m \times \R^m \to \R$ defined as
\begin{align*}
 &G(u_i, q_i) \\
 &= \begin{cases}
   |u_i| (1 - |\cos(\varphi_i)| |q_i|), & |q_i| < |\cos(\varphi_i)|, \\
   |u_i| | \sin(\varphi_i)| \sqrt{1 - |q_i|^2}, & |q_i| \geq |\cos(\varphi_i)|.
   \end{cases}
\end{align*}
 where $\varphi_i$ denotes the angle between $u_i$ and $q_i$, i.e. 
 $\cos(\varphi_i) |u_i| |q_i| = u_i \cdot q_i$
 with $\varphi_i := 0$ for $q_i = 0$ or $u_i = 0$.
\end{theorem}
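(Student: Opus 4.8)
The plan is to reduce the evaluation of the infimal convolution to a separable, coordinatewise problem, and then to solve the resulting two-dimensional geometric optimization explicitly.

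First I would exploit the one-homogeneity of $J = \|\cdot\|_{\lone(\R^m)}$ to write the two Bregman distances as $D_{\lone(\R^m)}^{q}(w,v) = \sum_{i}(|w_i| - q_i\cdot w_i)$ and $D_{\lone(\R^m)}^{-q}(w,-v) = \sum_{i}(|w_i| + q_i\cdot w_i)$, so that for a splitting $u = (u-z)+z$ the objective of the infimal convolution becomes $\sum_i h_i(z_i)$ with $h_i(\zeta) = |u_i-\zeta| - q_i\cdot(u_i-\zeta) + |\zeta| + q_i\cdot\zeta$. Since $q\in\partial\|v\|_{\lone(\R^m)}$ forces $|q_i|\le 1$, each $h_i$ is nonnegative. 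I would then show that $\inf_{z\in\lone(\R^m)}\sum_i h_i(z_i) = \sum_i \inf_{\zeta\in\R^m} h_i(\zeta)$: the inequality ``$\ge$'' is immediate, and for ``$\le$'' I truncate, taking the pointwise minimizers on the first $N$ coordinates and $0$ afterwards (an admissible $\lone$ sequence); because $\sum_i h_i(0) = \sum_i(|u_i|-q_i\cdot u_i)\le 2\|u\|_{\lone(\R^m)} < \infty$, the tail contribution vanishes as $N\to\infty$. This reduces the statement to proving $G(u_i,q_i) = \inf_{\zeta} h_i(\zeta)$ for each fixed pair $a=u_i$, $q=q_i$.

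Next I would turn this infimum into a tractable supremum. Writing $G(a,q) = \inf_\zeta\big[(|a-\zeta|-q\cdot(a-\zeta)) + (|\zeta|+q\cdot\zeta)\big]$, I observe that $w\mapsto |w|-q\cdot w$ is the support function of the ball $\{\eta : |\eta+q|\le 1\}$ and $w\mapsto |w|+q\cdot w$ that of $\{\eta : |\eta-q|\le 1\}$, so $G(a,q)$ is the infimal convolution of two support functions. By the duality between summation and infimal convolution of conjugates (equivalently a minimax exchange, justified since the inner problem is linear in $\zeta$ and the dual variables range over a compact convex set), this equals the support function of the intersection, which is nonempty as $0$ lies in both balls: $G(a,q) = \sup\{\zeta\cdot a : |\zeta - q|\le 1,\ |\zeta + q|\le 1\}$. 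As this lens is invariant under rotations about the axis through $\pm q$, a maximizer may be taken in $\mathrm{span}(a,q)$, reducing the problem to $\R^2$ with $q = |q|e_1$ and $a = |a|(\cos\varphi,\sin\varphi)$.

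The decisive step is then the planar optimization. The lens is the intersection of two unit disks centered at $\pm|q|e_1$; its boundary is two circular arcs meeting at the corners $(0,\pm\sqrt{1-|q|^2})$. Maximizing the linear functional $\zeta\mapsto\zeta\cdot a$, the maximizer is the boundary point whose outward normal cone contains $a$. On a smooth arc this is a tangency point, valid precisely when $|\cos\varphi|\ge|q|$, giving value $|a|(1-|q||\cos\varphi|)$; at a corner the normal cone is the wedge spanned by the arc normals $(\pm|q|,\sqrt{1-|q|^2})$, which contains $a$ exactly when $|\cos\varphi|\le|q|$, yielding value $|a||\sin\varphi|\sqrt{1-|q|^2}$. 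These are precisely the two cases of $G$, and they agree at the threshold $|q|=|\cos\varphi|$ (both equal $|a|(1-|q|^2)$), so the expression is continuous; the degenerate cases $a=0$ and $q=0$ match the convention $\varphi:=0$. I expect the main obstacle to be the bookkeeping in this case analysis: correctly deriving the normal-cone dichotomy $|\cos\varphi|\lessgtr|q|$, and tracking the signs of $\cos\varphi$ and $\sin\varphi$ so that the two tangency subcases $\cos\varphi\ge|q|$ and $\cos\varphi\le-|q|$ both collapse into the single formula $|a|(1-|q||\cos\varphi|)$.
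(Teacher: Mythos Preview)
Your approach is essentially the same as the paper's: both reduce to showing that each coordinatewise infimum equals the support function of the lens $\{\zeta:|\zeta-q_i|\le 1,\ |\zeta+q_i|\le 1\}$ evaluated at $u_i$, and then compute that support function. The paper reaches the dual problem via the biconjugate inequality $(f_1\Box f_2)\ge(f_1^*+f_2^*)^*$ and closes the gap by exhibiting explicit primal splittings, while you invoke the minimax/support-function identity directly; the paper evaluates the constrained supremum by a KKT case analysis, while you use the normal-cone geometry of the lens boundary. These are equivalent arguments in different dress, and your geometric reduction to $\mathrm{span}(u_i,q_i)\cong\R^2$ is a clean way to organize the case split.

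One point deserves care. In your coordinatewise reduction you write ``taking the pointwise minimizers on the first $N$ coordinates''. When $|q_i|=1$ the infimum of $h_i$ is $0$ but is \emph{not} attained: one needs $\zeta$ with $|\zeta|\to\infty$ along the direction of $q_i$ (the paper takes $\zeta=\tfrac{u_i}{2}-\tfrac{c}{2}q_i$ and lets $c\to\infty$). Your truncation argument therefore needs $\varepsilon/2^i$-approximate minimizers rather than exact ones on those coordinates; with that standard patch (which is exactly what the paper does) your argument goes through.
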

\begin{proof}
Let 
\begin{alignat*}{2}
f_1(u) &= D_{\lone(\R^m)}^q(u,v) &= \|u\|_{\lone(\R^m)} - \langle q,u \rangle, \\
f_2(u) &= D_{\lone(\R^m)}^{-q}(u,-v) &= \|u \|_{\lone(\R^m)} + \langle q, u \rangle.
\end{alignat*}
Since $(f_1 \Box f_2)^* = f_1^* + f_2^*$ and by the definition of the biconjugate, we know that
\begin{align*}
f_1 \Box f_2 \geq (f_1^* + f_2^*)^*.
\end{align*}

(1) We shall first compute the right-hand side.
We have
\begin{align*}
f_1^*(w) &= \iota_{B^{\infty}(1)}(w +q), \\
f_2^*(w) &= \iota_{B^{\infty}(1)}(w -q),
\end{align*}
where $\iota_{B^{\infty}(1)}$ denotes the characteristic function of the $\linf(\R^m)$-ball
\begin{align*}
 B^{\infty}(1) = \big\{ w \in \linf(\R^m) ~|~ \| w \|_{\linf(\R^m)} \leq 1 \big\}.
\end{align*}
Thus 
\begin{align*}
&(f_1^* + f_2^*)^*(u) = \sup_{w \in \linf(\R^m) } \langle u, w \rangle \\
&\text{ s.t.} ~ \|w+q\|_{\linf(\R^m)} \leq 1, \|w-q\|_{\linf(\R^m)} \leq 1.
\end{align*}
Taking into account the specific form of these constraints, we can carry out the computation pointwise, i.e.
\begin{align*}
\sup_{w_i \in \R^m } u_i \cdot w_i ~ \text{ s.t.} ~ |w_i + q_i| \leq 1, |w_i - q_i| \leq 1.
\end{align*}
From now on we drop the dependence on $i$ for simplicity.

$\bullet$ Let us first consider the case $|q| = 1$.
We immediately deduce that $w = 0$ and $u \cdot w = 0$. 

$\bullet$ Hence we assume $|q| < 1$ from now on, and set up the corresponding Lagrangian
\begin{align}
\mathcal{L}(w, \lambda, \mu) = - w \cdot u &+ \lambda (|w - q|^2 -1) \nonumber \\ 
&+ \mu (|w + q|^2 -1).
\label{eq:Lagrange}
\end{align}
Both the objective functional and the constraints are differentiable, 
so every optimal point of (\ref{eq:Lagrange}) has to fulfill the four Karush-Kuhn-Tucker conditions, namely
\begin{align*}
&\dfrac{\partial}{\partial w} \mathcal{L}(w, \lambda, \mu) = 0, \quad &\lambda (|w -q|^2-1) = 0, \\ 
&\lambda, \mu \geq 0, & \mu (|w + q|^2 -1) = 0,
\end{align*}
Slater's condition implies the existence of Lagrange multipliers for a KKT-point of (\ref{eq:Lagrange}). 
The first KKT-condition yields
\begin{align}
-u + 2 \lambda (w-q) + 2 \mu (w + q) = 0.
\label{eq:FirstOrderOptimality}
\end{align}

$\ast$ Let us first remark that the case $u=0$ causes the objective function to vanish anyway, hence in the following $u\neq 0$.

$\ast$ Then let us address the case $q=0$ in which \eqref{eq:FirstOrderOptimality} yields 
\begin{align*}
 u = 2 (\lambda + \mu) w.
\end{align*}
In case $|w| = 1$ we find that $2 (\lambda + \mu) = |u|$, hence $w = \frac{u}{|u|}$. 
We infer 
\begin{align*}
 w \cdot u = \frac{u \cdot u}{|u|} = |u|. 
\end{align*}
Note that for $|w| < 1$, we find that $\lambda = \mu = 0$ and hence $u = 0$.\\

$\ast$ If $q \neq 0$, we can distinguish four cases:\\
\textbf{1st case:} $|w -q|^2 <1, |w +q|^2 = 1$. \\
Thus $\lambda = 0$ and (\ref{eq:FirstOrderOptimality}) yields
\begin{align*}
u = 2 \mu (w +q). 
\end{align*}
Since $|w + q|^2 = 1$, we deduce $\mu = |u|/2$, so
\begin{align*}
w = \dfrac{u}{|u|} - q
\end{align*}
and finally for the value of the objective function
\begin{align*}
w \cdot u = \left( \dfrac{u}{|u|} - q \right) \cdot u = |u| - q \cdot u.
\end{align*}
\textbf{2nd case:} $|w +q|^2 <1, |w -q|^2 = 1$. \\ 
We analogously find 
\begin{align*}
w \cdot u = |u| + q\cdot u.
\end{align*}
The first two cases thus occur whenever (insert $w$ into the conditions)
\begin{align*}
\left| \dfrac{u}{|u|} - 2q \right| < 1 \text{ or } \left| \dfrac{u}{|u|} + 2q \right| < 1.
\end{align*} 
We calculate
\begin{alignat*}{3}
& \quad &&\left| \dfrac{u}{|u|} - 2q \right|^2 &&< 1 \\ 
&\Leftrightarrow  && \hspace{3.35em} |q|^2 &&< q \cdot \dfrac{u}{|u|} \\
&\Leftrightarrow  &&\hspace{3.35em} |q| &&< \cos(\varphi).
\end{alignat*}
Hence $q\cdot u >0$ and 
\begin{align*}
|u| - q \cdot u = |u| -|q \cdot u|. 
\end{align*}
In the second case we analogously find 
\begin{align*}
 |q| < -\cos(\varphi),
\end{align*}
hence
$q \cdot u <0$ and
\begin{align*}
|u| + q \cdot u = |u| -|q \cdot u|, 
\end{align*}
so we may summarize the first two cases as 
\begin{align*}
w \cdot u = |u| - |q \cdot u| =  |u| (1 - |\cos(\varphi)| |q|),
\end{align*}
whenever $|q| < |\cos(\varphi)|$. \\

\noindent
\textbf{3rd case:} $|w -q|^2 =1, |w +q|^2 = 1$. \\
At first we observe that from 
\begin{align*}
|w +q |^2 = |w -q|^2 
\end{align*}
we may deduce that $w \cdot q = 0$. 
Therefore we have
\begin{align*}
|w +q|^2 = 1 \Rightarrow |w| = \sqrt{1-|q|^2}. 
\end{align*}
We multiply the optimality condition \eqref{eq:FirstOrderOptimality} by $q$ and obtain
\begin{alignat*}{3}
&\qquad &&u \cdot q &&= 2\lambda (w -q) \cdot q + 2\mu (w +q) \cdot q \\
&\Leftrightarrow && u \cdot q &&= 2(\mu - \lambda)~ |q|^2 \\
&\Leftrightarrow &&(\mu - \lambda) &&= \frac{u}{2} \cdot \dfrac{q}{|q|^2}.
\end{alignat*}
Multiplying \eqref{eq:FirstOrderOptimality} by $w$ yields
\begin{align*}
u \cdot w = 2 (\lambda + \mu) |w|^2
\end{align*}
and another multiplication of \eqref{eq:FirstOrderOptimality} by $u$ yields
\begin{align*}
|u|^2 &= 2 (\lambda +\mu) w \cdot u + 2 (\mu -\lambda) q\cdot u \\
&= 4 (\lambda +\mu)^2 |w|^2 + \left( u\cdot \dfrac{q}{|q|} \right)^2,
\end{align*}
where we inserted the previous results in the last two steps. 
We rearrange and find
\begin{align*}
2 (\lambda + \mu) = \sqrt{ |u|^2 - \left( u \cdot \dfrac{q}{|q|} \right)^2} |w|^{-1}.
\end{align*}
Note that $|w| > 0$ since $|q| < 1$.
This finally leads us to 
\begin{align*}
u \cdot w &= 2 (\lambda + \mu) |w|^2 \\ 
&= \sqrt{ |u|^2 - \left( u \cdot \dfrac{q}{|q|} \right)^2} |w| \\
&= |u| \sqrt{\left(1 - \left( \dfrac{u}{|u|} \cdot \dfrac{q}{|q|} \right)^2\right) \left(1 - |q|^2\right)} \\
&= |u| \sqrt{\left(1 - |\cos(\varphi)|^2\right) \left(1 - |q|^2\right)} \\
&= |u| |\sin(\varphi)| \sqrt{\left(1 - |q|^2\right)}.
\end{align*}

\noindent
\textbf{4th case:} $|w -q|^2 < 1, |w +q|^2 < 1$. \\
Here the first KKT-condition yields $u =0$, 
which can only occur if the objective function $w\cdot u$ vanishes anyway. 
Summing up, we have 
\begin{align*}
 (f_1^* + f_2^*)^*(u) = \sum_{i \in \N} G(u_i,q_i) \leq \|u\|_{\lone(\R^m)}. 
\end{align*}

(2) It remains to show that
\begin{align*}
(f_1 \Box f_2)(u) &= \inf_{z \in \lone(\R^m)} \sum_{i \in \mathbb{N}} g_i(z_i) \\ 
&\leq (f_1^* + f_2^*)^*(u),
\end{align*}
where
\begin{align*}
 g_i(z_i) = |u_i - z_i| + |z_i| - q_i \cdot (u_i - 2z_i) \geq 0.
\end{align*}
Again we need to distinguish four cases.\\ 

\noindent
\textbf{1st case:} 
If $|q_i| < \cos(\varphi_i)$, we have $q_i \cdot u_i > 0$ and we can choose $z_i = 0$ to obtain
\begin{align*}
 g_i(z_i) = |u_i| - q_i \cdot u_i = |u_i| - |q_i \cdot u_i|. 
\end{align*}

\noindent
\textbf{2nd case:} 
Analogously if $|q_i| < -\cos(\varphi_i)$, we have $q_i \cdot u_i < 0$ and choose $z_i = u_i$, thus 
\begin{align*}
 g_i(z_i) = |u_i| + q_i \cdot u_i = |u_i| - |q_i \cdot u_i|. 
\end{align*}

\noindent
\textbf{3rd case:} 
If $|q_i| = 1$, we compute for $z_i = \frac{u_i}{2} - \frac{c}{2}q_i$ , $c > 0$,
\begin{align*}
 g_i(z_i) &= \left| \frac{u_i}{2} + \frac{c}{2} q_i \right| + \left| \frac{u_i}{2} - \frac{c}{2} q_i \right| - c|q_i|^2 \\
 &=\frac{c}{2} \left( \left| q_i + \frac{u_i}{c} \right| + \left| q_i - \frac{u_i}{c} \right| - 2 \right). 
\end{align*}
Using a Taylor expansion around $q$ we obtain 
\begin{align*}
 \left| q_i + \frac{u_i}{c} \right| &= |q_i| + \frac{q_i}{|q_i|} \cdot \frac{u_i}{c} + O (c^{-2}), \\
 \left| q_i - \frac{u_i}{c} \right| &= |q_i| - \frac{q_i}{|q_i|} \cdot \frac{u_i}{c} + O (c^{-2}).
\end{align*}
Hence with $|q_i| = 1$ we find
\begin{align*}
 g_i(z_i) = \frac{c}{2} (2 |q_i| + O(c^{-2}) - 2) = O(c^{-1}) \to 0 
\end{align*}
for $c \to \infty$.
Hence for every $\varepsilon$ there exists a $c_i > 0$ such that $g_i(z_i) \leq \varepsilon / 2^i$.\\

\noindent
\textbf{4th case:} 
Finally, if $|q_i| \geq |\cos(\varphi_i)|$ and $|q_i| < 1$, we pick $z_i = 2 \lambda_i (w_i - q_i)$, 
with $\lambda_i$ and $w_i$ being the Lagrange multiplier and the dual variable from the above computation of $(f_1^* + f_2^*)^*$.
It is easy to see that 
\begin{align*}
 g_i(z_i) = |u_i| | \sin(\varphi_i)| \sqrt{1 - |q_i|^2}.
\end{align*}

Hence we define $z := (z_i)_i$ such that 
\begin{align*}
 z_i = \begin{cases}
     0, & \text{ if } |q_i| < \cos(\varphi_i),\\
     u_i, & \text{ if } |q_i| < -\cos(\varphi_i),\\
      \frac{u_i}{2} - \frac{c_i}{2}q_i, & \text{ if } |q_i| = 1, \\
     \lambda_i (w_i - q_i) & \text{ if } |q_i| \geq |\cos(\varphi_i)|, \\
     & \hspace{1.25em} |q_i| < 1.
     \end{cases}
\end{align*}
Let $z^N$ denote $z$ truncated at index $N \in \N$, i.e. 
\begin{align*}
 z_i^N = \begin{cases}
          z_i, & \text{ if } i \leq N, \\
          0, & \text{ else.}
         \end{cases}
\end{align*}
Then trivially $z^N \in \lone(\R^m)$ and we compute
\begin{alignat*}{2}
 &(f_1 \Box f_2)(u) \leq  \sum_{i \in \N} g_i(z_i^N) \\
 \leq & \sum_{i = 1}^N \big( G(u_i, q_i) + \frac{\varepsilon}{2^i} \big) + \sum_{i=N+1}^\infty g_i(0) \\
 = &\sum_{i = 1}^\infty G(u_i, q_i) + \sum_{i = 1}^N \frac{\varepsilon}{2^i}\\
 &  + \sum_{i=N+1}^\infty \big( |u_i| - q_i \cdot u_i - G(u_i,q_i) \big) \\
 \leq & \sum_{i = 1}^\infty G(u_i, q_i) + \sum_{i = 1}^N \frac{\varepsilon}{2^i} + 3 \sum_{i=N+1}^\infty |u_i| \\
 \to& \sum_{i = 1}^\infty G(u_i, q_i) + \varepsilon
\end{alignat*}
as $N \to \infty$. 
This completes the proof.
\QED
\end{proof}

\section*{Acknowledgements}

This work was supported by ERC via Grant EU FP 7 - ERC Consolidator Grant 615216 LifeInverse. 
MB acknowledges support by the German Science Foundation DFG via  EXC 1003 Cells in Motion Cluster of Excellence, 
M\"unster, Germany.

\end{document}